\newtheorem{thm}{Theorem}[section]
\theoremstyle{definition}
\newtheorem{defn}{Definition}[section]
\theoremstyle{plain}
\theoremstyle{remark}
\newtheorem{rem}{Remark}[section]
\theoremstyle{plain}
\newtheorem{lem}[thm]{Lemma}
\theoremstyle{plain}
\newtheorem{cor}[thm]{Corollary}
\theoremstyle{plain}
\theoremstyle{conjecture}
\newcommand{\Sym}{{\rm Sym}}
\newcommand{\mc}{\mathcal}
\begin{document}
\title{Classical sheaf cohomology rings on Grassmannians}
\author{Jirui Guo}
\address{Physics Department, Robeson Hall (0435), Virginia Tech, Blacksburg, VA  24061, USA }
\email{jrkwok@vt.edu}
\author{Zhentao Lu}
\address{Mathematical Institute, University of Oxford, Andrew Wiles Building, Oxford OX2 6GG, UK}
\email{zhentao@sas.upenn.edu}
\author{Eric Sharpe}
\address{Physics Department, Robeson Hall (0435), Virginia Tech, Blacksburg, VA  24061, USA }
\email{ersharpe@vt.edu}

\date{\today}

\begin{abstract}
Let the vector bundle $\mc{E}$ be a deformation of the tangent bundle
over the Grassmannian $G(k,n)$. We compute the ring structure of
sheaf cohomology valued in exterior powers of $\mc{E}$,
also known as the
polymology. This is the first part of a project studying the
quantum sheaf cohomology of Grassmannians with deformations of the
tangent bundle, a generalization of ordinary quantum cohomology rings
of Grassmannians.
A companion physics paper \cite{Guo2017}
describes physical aspects of the theory,
including a conjecture for the quantum sheaf cohomology ring,
and numerous examples.
\end{abstract}

\maketitle

\section*{Introduction}

Let $X$ be a compact K\"ahler manifold and $\mc{E}$ be a holomorphic vector
bundle on $X$ which satisfies $c_i(\mc{E})=c_i(T_X)$, $i=1,2$.
Such bundles are sometimes called \textit{omalous bundles}.
There is a ring structure
on $\oplus_{p,q}H^q(X,\wedge^p\mc{E}^*)$ called the polymology,
see Section \ref{sec:polymology}. The study of the polymology, and its
quantum corrections, is a relatively new theory known as
quantum sheaf cohomology (QSC), which generalizes the ordinary quantum
cohomology of a space.

QSC was first described in \cite{katz2006notes},
and the mathematical theory of QSC was first worked out for deformations of
tangent bundles on toric varieties in
\cite{donagi2011physical,donagi2014mathematical}, based on physics results
in \cite{mcorist2009summing}
(see the companion paper \cite{Guo2017} and survey papers
\cite{mcorist2011revival, melnikov2012recent} for more physics background).
Briefly, the quantum corrections to the ring structure are computed by
applying sheaf cohomology to induced sheaves on a moduli space of curves,
rather than intersection theory as is the norm for ordinary quantum
cohomology.
When one takes $\mc{E}=T_X$, QSC reduces to the usual quantum cohomology
of $X$.
In general, QSC, as well as related correlation functions
(see for example \cite{lu2015correlator,mcorist2008half,mcorist2009summing}),
provide new invariants of omalous bundles.

As a step towards understanding QSC for Grassmannians,
in this paper we derive the classical sheaf cohomology ring
(polymology) for Grassmannians with vector bundles given by
deformations of the tangent bundle.
The companion paper
\cite{Guo2017} gives physics results for both classical and quantum
sheaf cohomology rings for such cases.  Mathematically rigorous derivations of
the QSC rings on Grassmannians, checking the physics results in
\cite{Guo2017},
are left
for future work.

The content of this paper is as follows. In Section \ref{sec:polymology} 
we define the polymology ring and the deformed tangent bundles of 
Grassmannians, whose cohomology is the main object of this paper. 
In Section \ref{sec:VBcohomologies} we introduce the notations for the 
homogeneous vector bundles canonically constructed by Weyl modules and 
Schur functors. We then describe a version of Borel-Weil-Bott theorem to 
compute the cohomology of homogeneous vector bundles on Grassmannians and 
present concrete results. These results will later be used to compute the 
cohomology of the deformed tangent bundles. In 
Section \ref{sec:degenerate_locus} we work out the degenerate locus where the 
map $f\in Hom(\mc{S\otimes S^*}, \mc{V\otimes S^*})$ fails to define a 
deformed tangent bundle. 
In the process we parametrize the deformations of $f$
in terms of an $n \times n$
matrix $B$.
In Section \ref{sec:moduli} we show that the deformations considered in this paper covers all isomorphism classes of first order deformations of the cotangent bundle. We also confirm that the deformed tangent bundles, under a precise condition, are not isomorphic to the tangent bundle. In Section \ref{sec:quotient_commutativity}, we show that the polymology is a quotient ring for generic deformed tangent bundles. In Section \ref{sec:cohomology_computation}, we perform the main computations. We first work out some general results about the $B$-dependence of the cohomology $H^r(\wedge^r\mc{E}^*)$, for the deformed tangent bundle $\mc{E}$ defined by $f=f_B$. Then we focus on the $r=n-k+1$ case and show that the result for  $G(k,n)$ shares the same form as the result for $G(k+c,n+c)$ (See Theorem \ref{thm:kernel_independent_of_n}). Next we compute the special case $B = \varepsilon I$, where $I$ is the identity matrix. This result then helps to determine the result for generic $\mc{E}$, stated as Theorem \ref{result}.
In Section \ref{sec:non-generic} and \ref{sec:qsc}, we discuss the non-generic 
situation and the conjecture for the quantum sheaf cohomology briefly, 
drawing the reader's attention to the examples of the former and the 
discussions of the latter in the companion physics paper 
\cite{Guo2017}. 
We conclude the paper in Section \ref{sec:conclusion}. 
Appendix A contains a technical result about Cech cohomology representatives.

\section{The classical cohomology ring}\label{sec:polymology}

We work over the complex numbers. In view of the equivalence of the category of algebraic vector bundles and that of locally free sheaves and the GAGA principle, we will constantly switch points of view and regard $\mc{E}$ as a holomorphic vector bundle, or an algebraic one, or the sheaf of holomorphic / algebraic sections of the vector bundle in this paper.

In general, we can use Cech cohomology to define a cup product,
just as in \cite{donagi2014mathematical}.

\begin{defn}
Let $X$ be a smooth projective complex algebraic variety and $\mc{E}$ be a vector bundle over $X$. The \textit{polymology} of $\mc{E}$ is the classical sheaf cohomology ring defined as $\bigoplus_{p,q} H^q(\wedge^p\mc{E}^*)$ with the multiplication (cup product)
\begin{equation}
H^q(\wedge^p \mc{E}^*) \times  H^{q^\prime}(\wedge^{p^\prime} \mc{E}^*)\to H^{q+q^\prime}(\wedge^{p+p^\prime}\mc{E}^*),
\end{equation}
defined by the natural maps
\begin{equation}
 H^q(\wedge^p \mc{E}^*) \times  H^{q^\prime}(\wedge^{p^\prime} \mc{E}^*)\to  H^q(\wedge^p \mc{E}^*) \otimes  H^{q^\prime}(\wedge^{p^\prime} \mc{E}^*) \to H^{q+q^\prime}(\wedge^p\mc{E}^*\otimes\wedge^{p^\prime} \mc{E}^*)
 \end{equation}
in Cech cohomology, followed by the map induced from the projection $\wedge^p\mc{E}^*\otimes\wedge^{p^\prime} \mc{E}^*\to\wedge^{p+p^\prime}\mc{E}^*$. We denote it by $H^*_\mc{E}(X)$.
\end{defn}

This is analogous to the product structure discussed in Chapter 14 of
\cite{MR658304}.

We first define our notation.
Let $V$ be an $n$-dimensional complex vector space and $X=G(k,V)$ be the
Grassmannian of $k$-planes in $V$.
In this paper we will assume $1 < k < n-1$, {\it i.e.} $X$ is not the projective space.
Let $\mc{S}$ be the tautological subbundle, $\mc{V}$ be the trivial bundle $X\times V$, and $\mc{Q}$ be the quotient bundle. They fit in the short exact sequence
\begin{equation}
0\to \mc{S}\to\mc{V}\to\mc{Q}\to 0.
\end{equation}
The tangent bundle $T_X \cong \mc{Q}\otimes\mc{S}^*$ then is the cokernel
\begin{equation}\label{seq:ses_tangent}
0 \to \mc{S}\otimes\mc{S}^* \to \mc{V}\otimes\mc{S}^* \to  T_X \to 0.
\end{equation}

In this paper we focus on rank $k(n-k)$ bundles defined by the short exact sequence
\begin{equation}\label{seq:define_E}
0 \to \mc{S}\otimes\mc{S}^* \xrightarrow{g} \mc{V}\otimes\mc{S}^* \to  \mc{E} \to 0.
\end{equation}

We will refer to these bundles simply as \textit{deformed tangent bundles} and their dual vector bundles as \textit{deformed cotangent bundles}.
We show in 
Section \ref{sec:moduli} that they covers all infinitesimal  deformation directions of the cotangent bundle.

When $\mc{E} \cong TX$, the sheaf cohomology groups above are ultimately
dual to Schubert subvarieties, and the quantum corrections to the product
lead to quantum cohomology.  For a deformation, the sheaf cohomology groups
above have a physical interpretation as `gauge-invariant operators' in
a gauged linear sigma model, and the quantum corrections to the
product lead to QSC.

\section{The cohomology of homogeneous bundles}\label{sec:VBcohomologies}
\subsection{Weyl modules and Schur functors}

To compute the polymology, we use the dual sequence of \eqref{seq:define_E},
\begin{equation}
 0 \to \mc{E}^* \to \mc{V^*\otimes S} \xrightarrow{f} \mc{S^*\otimes S}\to 0,
\end{equation}
and its Koszul resolutions (see Section \ref{sec:quotient_commutativity}), which involve homogeneous vector bundles. As $GL_n$ modules, the homogeneous bundles decompose into Weyl modules indexed by Young diagrams, e.g. $K_\lambda \mc{V}^*$ and $K_\beta \mc{S}^* \otimes K_\gamma \mc{Q}^*$. See Chapter 6 of
 \cite{MR1153249} for background.

Let $\lambda^\prime$ be the transpose of the Young diagram $\lambda$. Since
we work in characteristic zero, Weyl modules and Schur functors are directly related:
\begin{equation}
K_\lambda M \cong L_{\lambda^\prime} M.
\end{equation}

Schur functors can also be applied to complexes (See Chapter 2 of\cite{weyman2003cohomology}). We first state a result for Schur complexes, generalizing the familiar Koszul complexes.
\begin{thm}[Chapter 2, Exercise 21 of \cite{weyman2003cohomology}]\label{schur complex}
For a short exact sequence of $\mathbb{C}$-vector spaces
\begin{equation}
0\to F_1 \xrightarrow{\Psi} F_0 \to M\to 0
\end{equation}
and a Young diagram $\lambda^\prime$ of weight $r$ (namely $\sum \lambda_i =r$), we have a $(r+2)$-term long exact sequence
\begin{equation}\label{concrete}
0\to K_{\lambda^\prime} F_1 \to ...\to K_\lambda F_0 \to K_\lambda M\to 0.
\end{equation}
More precisely, define the Schur complex $L_{\lambda^\prime}\mathbb{E}$ for the complex $\mathbb{E}: F_1\to F_0$ as
\begin{equation}
(L_{\lambda^\prime}\mathbb{E})_r \to ... \to (L_{\lambda^\prime}\mathbb{E})_1 \to (L_{\lambda^\prime}\mathbb{E})_0,
\end{equation}
with \begin{equation}\label{Schur_complex_component_as_direct_sum}
(L_{\lambda^\prime}\mathbb{E})_t = \bigoplus_{|\nu|=r-t} K_{\lambda^\prime/\nu} F_1 \otimes K_{\nu^\prime} F_0.
\end{equation}
Then (\ref{concrete}) is exactly
\begin{equation}
0 \to (L_{\lambda^\prime}\mathbb{E})_r \to ... \to (L_{\lambda^\prime}\mathbb{E})_1 \to (L_{\lambda^\prime}\mathbb{E})_0 \to K_\lambda M \to 0.
\end{equation}\end{thm}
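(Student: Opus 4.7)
The plan is to construct the Schur complex $L_{\lambda'}\mathbb{E}$ functorially in the two-term complex $\mathbb{E}: F_1 \to F_0$, verify its acyclicity in positive homological degrees, and identify its zeroth cohomology with $K_\lambda M$. The principal simplification is that over $\mathbb{C}$ the short exact sequence splits, giving $F_0 \cong F_1 \oplus M$ as vector spaces; since the Schur complex is natural in $\mathbb{E}$, it suffices to treat the split case.

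First I would define the differential $d_t : (L_{\lambda'}\mathbb{E})_t \to (L_{\lambda'}\mathbb{E})_{t-1}$ by a Leibniz-type rule: on each summand $K_{\lambda'/\nu} F_1 \otimes K_{\nu'} F_0$ with $|\nu| = r-t$, the map transfers one box from the skew factor on $F_1$ to the straight factor on $F_0$ by applying $\Psi$, summing over all partitions $\widetilde\nu \subset \lambda'$ obtained from $\nu$ by adding one box. Standard manipulations of skew Weyl modules yield $d^2 = 0$ and show that the construction is functorial in $\mathbb{E}$. The endpoints $(L_{\lambda'}\mathbb{E})_0 = K_\lambda F_0$ and $(L_{\lambda'}\mathbb{E})_r = K_{\lambda'} F_1$ are consistent with the claimed exact sequence, since $K_{\lambda'/\lambda'} F_1 = \mathbb{C}$ and $K_{\emptyset} F_0 = \mathbb{C}$.

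Next I would verify acyclicity by applying the Cauchy decomposition
\begin{equation*}
K_{\nu'} F_0 \;\cong\; \bigoplus_{\alpha, \beta} c^{\nu'}_{\alpha\beta}\, K_\alpha F_1 \otimes K_\beta M
\end{equation*}
to each summand (with $c^{\nu'}_{\alpha\beta}$ the Littlewood-Richardson coefficients) and regrouping by the partition $\beta$ on $M$. Each $\beta$-isotypic subcomplex then factors as $C_\bullet^{(\beta)} \otimes K_\beta M$, where $C_\bullet^{(\beta)}$ is built entirely from Weyl modules of $F_1$. Using the Pieri rule and an induction on $|\lambda'| - |\beta|$, one expects to identify $C_\bullet^{(\beta)}$ with a Koszul-type acyclic complex whose only cohomology equals the $\beta$-multiplicity space predicted by the Cauchy formula for $K_\lambda(F_1 \oplus M)$. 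Summing over $\beta$ recovers $K_\lambda M$, matching the surjection $K_\lambda F_0 \twoheadrightarrow K_\lambda M$ induced from $F_0 \twoheadrightarrow M$.

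The main obstacle is the combinatorial verification that each $C_\bullet^{(\beta)}$ is acyclic with the predicted cokernel; this is a nontrivial identity interlacing Littlewood-Richardson coefficients with the explicit form of the differentials of $L_{\lambda'}$. A cleaner route is to reduce to the classical Koszul case $\lambda = (1^r)$ via the realization of $K_\lambda$ as the image of a Young symmetrizer $c_\lambda$ acting on $F_0^{\otimes r}$: the tensor power of the short exact sequence $F_1 \to F_0 \to M$ gives a Koszul resolution of $M^{\otimes r}$, and semisimplicity in characteristic zero allows $c_\lambda$ to be applied termwise, producing exactly $L_{\lambda'}\mathbb{E}$ and resolving $K_\lambda M$.
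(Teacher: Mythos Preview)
The paper does not supply a proof of this statement at all: it is quoted verbatim as an exercise from Weyman's book, and the authors only add notational remarks (that they have transposed $\lambda$, that the syzygy hypothesis is automatic for vector spaces, and that the decomposition of the terms is the characteristic-zero case of Weyman's (2.4.10)(a)) before placing a \qed. So there is no ``paper's own proof'' to compare against; your proposal is already more than the paper attempts.

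As for the argument itself, your second route (apply a Young symmetrizer to the $r$-fold tensor power of the two-term complex $\mathbb{E}$) is the right idea in characteristic zero and is essentially how one shows that the Schur complex is acyclic in this setting. One point you should make explicit is that $S_r$ acts on $\mathbb{E}^{\otimes r}$ through the Koszul sign rule (since $F_1$ sits in odd degree), so the relevant idempotent is the one attached to the \emph{super} Schur functor; this is exactly what produces the mixture of $K_{\lambda'/\nu}F_1$ and $K_{\nu'}F_0$ in each term rather than a single Weyl module. Your first route via the Cauchy decomposition and $\beta$-isotypic subcomplexes is correct at the level of underlying graded pieces, but verifying that the differentials respect that decomposition is precisely the hard combinatorial step you flag, and it is not clear it is any easier than the symmetrizer argument. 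Either way, the splitting $F_0\cong F_1\oplus M$ together with functoriality reduces the claim to computing $L_{\lambda'}$ of a complex with zero differential, where the identification of $H_0$ with $K_\lambda M$ is immediate from the Cauchy formula.
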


Note that we quote the result with $\lambda^\prime$ instead of $\lambda$,
to better mesh with our notation.
Also, we are working on vector spaces, so it is automatically a $(r-1)^{st}$ syzygy module for any $r\geq 1$. The direct sum decomposition (\ref{Schur_complex_component_as_direct_sum}) is the characteristic zero case of (2.4.10), part (a) of \cite{weyman2003cohomology}, see (2.3.1) of \cite{weyman2003cohomology}.

Also, note that
\begin{equation}\label{skew_schur}
 K_{\lambda^\prime/\nu} F = \displaystyle\bigoplus_{|\mu|=|\lambda^\prime| -|\nu|} c^{\lambda^\prime}_{\mu\nu}K_\mu F,
 \end{equation}
 where $c^{\lambda^\prime}_{\mu\nu}$ is the Littlewood-Richardson coefficient\footnote{See for example p83 of \cite{MR1153249}, or (2.3.6) of Weyman\cite{weyman2003cohomology}.}.
\qed

Now we apply Theorem \ref{schur complex} to the short exact sequence
\begin{equation}
0\to \mathcal{Q}^*\to \mathcal{V}^*\to \mathcal{S}^*\to 0
\end{equation}
to get
\begin{equation}\label{K_lambda}
0\to K_{\lambda^\prime} \mathcal{Q}^* \to ... \to K_\lambda \mathcal{V}^*\to K_\lambda \mathcal{S}^*\to 0.
\end{equation}
Tensoring this sequence with $K_\lambda\mathcal{S}$, we get
\begin{equation}\label{E lambda}
E_\lambda:
0\to K_{\lambda^\prime} \mathcal{Q}^* \otimes K_\lambda\mathcal{S} \to ... \to K_\lambda \mathcal{V}^*\otimes K_\lambda\mathcal{S}
\to K_\lambda \mathcal{S}^*\otimes K_\lambda\mathcal{S}
\to 0.
\end{equation}

The maps in this sequence are naturally induced from the tautological sequence, hence the map
\begin{equation}\label{delta_lambda}
\delta_{\lambda}^r  : H^0( K_\lambda \mathcal{S}^*\otimes K_\lambda\mathcal{S})
\to
H^r( K_{\lambda^\prime} \mathcal{Q}^* \otimes K_\lambda\mathcal{S})
\end{equation}
 on cohomology is also naturally induced from it.

Taking $\lambda = (r)$, we also have the following induced sequence from the dual of \eqref{seq:ses_tangent},
\begin{equation}\label{E r}
E^r: 0\to \wedge^r \Omega \to ... \to \Sym^r(\mathcal{V}^*\otimes \mathcal{S})
\to \Sym^r (\mathcal{S}^*\otimes \mathcal{S})
\to 0,
\end{equation}
with induced map
\begin{equation}
\delta^r: H^0(\Sym^r (\mathcal{S}^*\otimes\mathcal{S}) ) \to H^r (\Omega^r).
\end{equation}

Comparing (\ref{E lambda}) and (\ref{E r}), we have the following theorem:
\begin{thm}\label{thm:direct_sum}
The complex (\ref{E r}) factorizes as
\begin{equation}
E^r = \oplus_{\lambda\in \mathcal{P}(k,r)} E_\lambda,
\end{equation}
and
\begin{equation}
\delta^r = \oplus_{\lambda\in \mathcal{P}(k,r)} \delta_\lambda^r,
\end{equation}
\end{thm}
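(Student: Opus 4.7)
The plan is to recognize the identity $E^r = \bigoplus_\lambda E_\lambda$ as a Cauchy-type decomposition lifted to Schur complexes. The key observation is that the two-term complex $\Omega\to \mc{V}^*\otimes\mc{S}$ underlying (\ref{E r}) is obtained from the sequence $\mc{Q}^*\to\mc{V}^*$ by tensoring with $\mc{S}$ (placed in degree zero), so that every term of $E^r$ involves a symmetric or exterior power of a tensor product with $\mc{S}$. The Cauchy identities then produce summands indexed by partitions of $r$, each carrying a $K_\lambda \mc{S}$-factor that matches the one appearing in $E_\lambda$.

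First I would expand the $t$-th term $(E^r)_t = \wedge^t(\mc{Q}^*\otimes\mc{S}) \otimes \Sym^{r-t}(\mc{V}^*\otimes\mc{S})$ using the Cauchy and dual Cauchy identities
\begin{equation*}
\Sym^{r-t}(A\otimes B) = \bigoplus_{|\pi|=r-t} K_\pi A \otimes K_\pi B, \qquad \wedge^t(A\otimes B) = \bigoplus_{|\mu|=t} K_\mu A \otimes K_{\mu'} B,
\end{equation*}
and collapse the pair of $\mc{S}$-factors $K_{\mu'}\mc{S}\otimes K_\pi \mc{S}$ into $\bigoplus_\lambda c^{\lambda}_{\mu'\pi} K_\lambda \mc{S}$ via the Littlewood-Richardson rule. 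In parallel I would rewrite $(E_\lambda)_t$ from formula (\ref{Schur_complex_component_as_direct_sum}) using the skew-Schur expansion (\ref{skew_schur}), obtaining summands $K_\mu\mc{Q}^*\otimes K_{\nu'}\mc{V}^*\otimes K_\lambda\mc{S}$ with coefficients $c^{\lambda'}_{\mu\nu}$. Setting $\nu=\pi'$ and invoking the transposition symmetry $c^\lambda_{\mu'\pi} = c^{\lambda'}_{\mu\pi'}$ of Littlewood-Richardson coefficients identifies the two expressions. The sum reduces to $\lambda\in \mathcal{P}(k,r)$ because $K_\lambda\mc{S}$ vanishes when $\lambda$ has more than $k$ parts.

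Second, I would verify that the differentials split compatibly. Both the differentials of $E^r$ and those of each $E_\lambda$ are induced functorially from the single inclusion $\mc{Q}^*\hookrightarrow\mc{V}^*$ via the Schur complex construction. Consequently, once the terms match position-by-position, the splitting of the differentials follows from $GL$-equivariance: the Cauchy decomposition is precisely the isotypic decomposition under the natural $GL(\mc{S})$-action on the fibers, and Schur's lemma forces the differentials to preserve isotypic components. The identity $\delta^r = \bigoplus_\lambda \delta_\lambda^r$ then follows from the naturality of connecting homomorphisms under finite direct sums of complexes. The main technical obstacle is the Littlewood-Richardson bookkeeping in the first step; the verification is routine provided the transposition conventions are handled carefully, but it is easy to drop signs or mismatch partitions via $\lambda \leftrightarrow \lambda'$ without caution. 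The equivariance argument for the differentials, by contrast, is conceptually clean.
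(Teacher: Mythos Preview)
Your proposal is correct and follows essentially the same route as the paper: expand both sides via the Cauchy identities and the skew-Schur formula (\ref{skew_schur}), then match them using the transposition symmetry $c^{\lambda}_{\mu\nu}=c^{\lambda'}_{\mu'\nu'}$ of Littlewood--Richardson coefficients, exactly as in the paper's computation (\ref{long_derivation}). Your $GL(\mc{S})$-equivariance argument for the compatibility of differentials is in fact more explicit than the paper, which simply asserts that ``comparing terms, it suffices'' to verify the termwise identity (\ref{single_term}).
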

\begin{proof}
The proof presented here is computational and relies on facts
that are only true in characteristic zero.

Comparing terms, one finds that it suffices to show that
\begin{equation}\label{single_term}
\bigoplus_{|\lambda|=r} \bigoplus_{|\nu|=r-t} K_{\lambda^\prime/\nu} \mathcal{Q}^* \otimes K_{\nu^\prime} \mathcal{V}^* \otimes K_\lambda\mathcal{S}
=
\wedge^t (\mathcal{Q}^*\otimes\mathcal{S}) \otimes \Sym^{r-t}(\mathcal{V}^*\otimes\mathcal{S}).
\end{equation}

Note that we have
\begin{equation}
\wedge^t (\mathcal{Q}^*\otimes\mathcal{S}) = \bigoplus_{|\mu|=t} K_{\mu}\mathcal{Q}^*\otimes K_{\mu^\prime}\mathcal{S}
\end{equation}
and
\begin{equation}
\Sym^{r-t}(\mathcal{V}^*\otimes\mathcal{S})
= \bigoplus_{|\nu|=r-t} K_{\nu^\prime} \mathcal{V}^*\otimes K_{\nu^\prime}\mathcal{S},
\end{equation}
where we write $\nu^\prime$ (which is the transpose of the Young diagram $\nu$) instead of $\nu$ purely for the convenience of manipulating notations.

Now we apply (\ref{skew_schur}) to the LHS of (\ref{single_term}), and get
\begin{equation}\label{long_derivation}
\begin{array}{ll}
{\rm LHS} &=\displaystyle \bigoplus_{|\lambda|=r} \bigoplus_{|\nu|=r-t} K_{\lambda^\prime/\nu} \mathcal{Q}^* \otimes K_{\nu^\prime} \mathcal{V}^* \otimes K_\lambda\mathcal{S} ,\\
& =\displaystyle \bigoplus_{|\lambda|=r} \bigoplus_{|\nu|=r-t} 
\left(\bigoplus_{|\mu|=t} c^{\lambda^\prime}_{\mu\nu}K_\mu \mathcal{Q}^* \right)
 \otimes K_{\nu^\prime} \mathcal{V}^* \otimes K_\lambda\mathcal{S},\\
& =\displaystyle \bigoplus_{|\lambda|=r} \bigoplus_{|\nu|=r-t} \left(
\bigoplus_{|\mu|=t} c^{\lambda}_{\mu^\prime\nu^\prime}K_\mu \mathcal{Q}^*\right)
 \otimes K_{\nu^\prime} \mathcal{V}^* \otimes K_\lambda\mathcal{S},\\
& =\displaystyle \bigoplus_{|\nu|=r-t} \bigoplus_{|\mu|=t} K_\mu \mathcal{Q}^* \otimes K_{\nu^\prime} \mathcal{V}^* \otimes
\left(\bigoplus_{|\lambda|=r} c^{\lambda}_{\mu^\prime\nu^\prime} K_\lambda\mathcal{S}\right),\\
& =\displaystyle \bigoplus_{|\nu|=r-t} \bigoplus_{|\mu|=t} K_\mu \mathcal{Q}^* \otimes K_{\nu^\prime} \mathcal{V}^* \otimes(K_{\mu^\prime}S\otimes K_{\nu^\prime}S),\\
& =\displaystyle \left(\bigoplus_{|\mu|=t} 
K_\mu \mathcal{Q}^*\otimes K_{\mu^\prime}S \right)  \otimes 
\left(
\bigoplus_{|\nu|=r-t} K_{\nu^\prime} \mathcal{V}^* \otimes K_{\nu^\prime}S
\right),\\
& = {\rm RHS},
\end{array}
\end{equation}
where we used the property $c^{\lambda}_{\mu\nu} =c^{\lambda^\prime}_{\mu^\prime\nu^\prime}$ for Littlewood-Richardson coefficients (Corollary 2, Section 5.1 of \cite{fulton1997young}).
\end{proof}

\begin{rem}\label{rem1}
Applying Theorem \ref{schur complex} to \begin{equation*}
0 \to\mc{ S \to V \to Q }\to 0,
\end{equation*} we get an exact sequence
\begin{equation*}
0 \to K_\lambda \mc{S} \to ... \to K_{\lambda^\prime} \mc{V} \to K_{\lambda^\prime}  \mc{Q} \to 0,
\end{equation*}
and its dual,
\begin{equation}
0  \to K_{\lambda^\prime} \mc{Q^*} \to K_{\lambda^\prime}  \mc{V^*} \to ...\to K_\lambda \mc{S^*} \to 0.
\end{equation}
Tensoring it with $K_\lambda \mc{S}$, and sum over $|\lambda|=r$, we get a decomposition of
\begin{equation}\label{the_true_LEG}
\begin{array}{ll}
0 &\to \Omega^r \to \wedge^r(\mc{V}^*\otimes \mc{S}) \to ...\\
&\to (\mc{V}^*\otimes \mc{S})\otimes \Sym^{r-1}(\mc{S^*\otimes S}) \to  \Sym^r(\mc{S^*\otimes S})\to 0.
\end{array}
\end{equation}
This can be proved analogously to Theorem \ref{thm:direct_sum}, by verifying
\begin{equation}
\displaystyle\bigoplus_{|\lambda|=r}\bigoplus_{|\nu|=t} K_{\lambda/\nu}\mc{S}^*\otimes K_{\nu^\prime}\mc{V}^* \otimes K_{\lambda}\mc{S} = \wedge^t(\mc{V}^*\otimes \mc{S})
\otimes \Sym^{r-t}(\mc{S^*\otimes S}).
\end{equation}
\end{rem}

\subsection{Borel-Weil-Bott theorem for homogeneous bundles on Grassmannians}

To compute the cohomologies, we quote a version of the Borel-Weil-Bott Theorem from \cite{weyman2003cohomology} (Corollary 4.1.9).
\begin{thm}\label{BWB}(Borel-Weil-Bott)
For each vector bundle of the form $K_{\beta}\mathcal{S}^*\otimes K_{\gamma}\mathcal{Q}^*$, where $K_\beta\mathcal{S}^* $ and $K_{\gamma}\mathcal{Q}^*$ are Weyl modules, the only non-vanishing cohomology lives in dimension $l(\alpha)$, if there is a way transfering $\alpha = (\beta,\gamma)$ into a dominant weight $\tilde{\alpha}$ of $GL(n)$ and $l(\alpha)$ is the number of elementary transformations performed.

In this case, we have
\begin{equation}
H^{l(\alpha)}(K_{\beta}\mathcal{S}^*\otimes K_{\gamma}\mathcal{Q}^*) = K_{\tilde{\alpha}}{V}^*.
\end{equation}
\end{thm}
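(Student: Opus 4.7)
The plan is to reduce the statement to the classical Borel--Weil--Bott theorem on the full flag variety $\mathrm{Fl}(V)$ and then translate the resulting Weyl group data into the elementary transformation algorithm in the statement. Write $G(k,V) = GL(V)/P$, where $P$ is the maximal parabolic stabilizing a fixed $k$-plane, with Levi decomposition $L = GL_k \times GL_{n-k}$. The bundle $K_\beta \mc{S}^* \otimes K_\gamma \mc{Q}^*$ is the homogeneous bundle $GL(V) \times_P W_\alpha$ associated to the irreducible $L$-module $W_\alpha$ of highest weight $\alpha = (\beta,\gamma)$, extended trivially across the unipotent radical of $P$.

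First I would consider the projection $\pi : \mathrm{Fl}(V) \to G(k,V)$ and the line bundle $\mc{L}_\alpha$ on $\mathrm{Fl}(V)$ attached to the character $\alpha$ of the maximal torus. The fibers of $\pi$ are products of full flag varieties for $GL_k$ and $GL_{n-k}$, and since $\beta$ and $\gamma$ are dominant along the Levi, the classical Borel--Weil theorem applied fiberwise gives
\[
\pi_* \mc{L}_\alpha \;=\; K_\beta \mc{S}^* \otimes K_\gamma \mc{Q}^*,
\qquad R^i\pi_*\mc{L}_\alpha = 0 \text{ for } i > 0.
\]
The Leray spectral sequence then collapses to an isomorphism $H^i(G(k,V), K_\beta\mc{S}^* \otimes K_\gamma\mc{Q}^*) \cong H^i(\mathrm{Fl}(V), \mc{L}_\alpha)$.

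Next I would apply the Borel--Weil--Bott theorem on $\mathrm{Fl}(V)$: with $\rho = (n-1, n-2, \ldots, 1, 0)$, either $\alpha + \rho$ has a repeated entry (in which case every cohomology group vanishes), or there is a unique minimal $w \in S_n$ with $w(\alpha + \rho)$ strictly decreasing, and then $H^{\ell(w)}(\mathrm{Fl}(V), \mc{L}_\alpha)$ is the irreducible $GL(V)$-representation with highest weight $\tilde\alpha := w(\alpha+\rho) - \rho$, with all other cohomologies vanishing. The standard proof proceeds by induction on $\ell(w)$ using the $\mathbb{P}^1$-bundle associated to each simple reflection and the corresponding short exact sequence relating $\mc{L}_\alpha$ and $\mc{L}_{s_i \cdot \alpha}$.

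Finally I would identify the ``elementary transformations'' in the statement with adjacent transpositions acting on $\alpha + \rho$: each swap of adjacent entries is a simple reflection in $S_n$, the number of such swaps needed to sort $\alpha + \rho$ into a strictly decreasing sequence is exactly $\ell(w)$, and encountering a repeated entry corresponds exactly to the vanishing case. Removing the $\rho$ shift at the end recovers the weight $\tilde\alpha$ in the statement. The main obstacle is the relative Borel--Weil step, specifically verifying rigorously that the higher direct images vanish and that $\pi_*\mc{L}_\alpha$ carries the correct $GL(V)$-equivariant structure to be identified with $K_\beta\mc{S}^* \otimes K_\gamma\mc{Q}^*$; once this is in place, the Weyl-group translation into the sorting algorithm is essentially bookkeeping with $\rho$-shifts.
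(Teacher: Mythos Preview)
The paper does not prove this theorem at all: it is stated as a quotation of Corollary 4.1.9 from Weyman's book \cite{weyman2003cohomology}, with no argument given beyond the description of the mutation algorithm. Your outline is a correct and standard proof of this version of Borel--Weil--Bott (reduction to the full flag via relative Borel--Weil along $\pi:\mathrm{Fl}(V)\to G(k,V)$, collapse of the Leray spectral sequence, then the classical theorem on $\mathrm{Fl}(V)$), and your identification of the paper's ``mutation'' $(\ldots,\alpha_i,\alpha_{i+1},\ldots)\mapsto(\ldots,\alpha_{i+1}-1,\alpha_i+1,\ldots)$ with the dot action $s_i\cdot\alpha = s_i(\alpha+\rho)-\rho$ of a simple reflection is exactly right. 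So there is nothing to compare: you have supplied a proof where the paper simply cites one.
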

The elementary transformations will be simply called \textit{mutations} in this paper. They are easily described in concrete terms.
Specifically, a mutation maps $\alpha = (\alpha_1,...,\alpha_n)$
with $\alpha_i < \alpha_{i+1}$,
to $(\alpha_1,...,\alpha_{i+1} - 1, \alpha_{i}+1,...,\alpha_n)$.

\subsection{The cohomology of homogeneous bundles}

We work out some results of the cohomology of homogeneous bundles on Grassmannians for later use.

Let $\mathcal{P}(k, r)$ be the set of all partitions of $r$ with at most $k$ parts.
As a $GL_n$ representation, the zeroth isotypical  component of $H^0(\Sym^r(\mathcal{S}^*\otimes\mathcal{S}))$ is:

\begin{equation}\label{sym_r}
\begin{array}{ll}
H^0_0(\Sym^r(\mathcal{S}^*\otimes\mathcal{S}))
& \cong \bigoplus_{\lambda\in \mathcal{P}(k,r)} H^0_0 (  K_\lambda \mathcal{S}^* \otimes K_\lambda \mathcal{S}),\\
& \cong \bigoplus_{\lambda\in \mathcal{P}(k,r)}\mathbb{C}\cdot \kappa_{\lambda},
\end{array}
\end{equation}
where $K_\lambda \mathcal{S}$ is the Schur functor associated to the Young diagram $\lambda=(\lambda_1,...,\lambda_k)$, with $\lambda_i$ being the number of boxes in the $i$-th row.

Furthermore, we can show that $H^0(\Sym^r(\mc{S}^*\otimes \mc{S}))$ is a
trivial
$GL(V)$-module\footnote{This is easy to prove from Borel-Weil-Bott:
the components $K_\lambda \mc{S}^*$ of $\Sym^r(\mc{S}^*\otimes \mc{S})$ satisfy
$|\lambda|=0$. If $\lambda\neq 0$, then $\lambda_k <0$. Hence
$(\lambda_1,...,\lambda_k,0,...,0)$ is not non-increasing and there is no
contribution to $H^0$.}.
Hence we have
\begin{equation}\label{sym_r2}
\begin{array}{ll}
H^0(\Sym^r(\mathcal{S}^*\otimes\mathcal{S}))
& \cong \bigoplus_{\lambda\in \mathcal{P}(k,r)} H^0(  K_\lambda \mathcal{S}^* \otimes K_\lambda \mathcal{S}),\\
& \cong \bigoplus_{\lambda\in \mathcal{P}(k,r)}\mathbb{C}\cdot \kappa_{\lambda}.
\end{array}
\end{equation}

We can also compute the polymology for the cotangent bundle.
First note that
\begin{equation}
\Omega^r \cong \wedge^r (\mathcal{Q}^*\otimes\mathcal{S})
\cong \oplus_{\lambda\in\mathcal{P}(k,r)} K_{\lambda^\prime} \mathcal{Q}^*\otimes K_{\lambda}\mathcal{S},
\end{equation}
where $\lambda^\prime$ is the transpose of the Young diagram $\lambda$.

Of course, when $\lambda_1 > n-k$, $\lambda^\prime$ has more than $n-k$ rows,
and so $K_{\lambda^\prime}\mathcal{Q}^*$ vanishes.

\begin{thm}\label{nonvanishing}
When $\lambda\in \mathcal{P}(k,r)$ satisfies $\lambda_1\leq n-k$ (or pictorially the Young diagram $\lambda$ is contained in the $(k\times (n-k))$ rectangle), we have
\begin{equation}
H^r( K_{\lambda^\prime} \mathcal{Q}^*\otimes K_{\lambda}\mathcal{S} ) \cong \mathbb{C},
\end{equation}
and the rest $H^j$'s are 0.
\end{thm}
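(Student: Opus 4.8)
The plan is to compute each group $H^j(K_{\lambda^\prime}\mathcal{Q}^*\otimes K_\lambda\mathcal{S})$ directly from the Borel--Weil--Bott theorem (Theorem \ref{BWB}). Since $\mathcal{S}$ has rank $k$, I first dualize the $\mathcal{S}$-factor: $K_\lambda\mathcal{S}\cong K_{\lambda^\vee}\mathcal{S}^*$ where $\lambda^\vee=(-\lambda_k,-\lambda_{k-1},\dots,-\lambda_1)$ is a dominant weight for $GL(\mathcal{S})$, and since $\lambda_1\le n-k$ the transpose $\lambda^\prime$ has at most $n-k$ parts, so $K_{\lambda^\prime}\mathcal{Q}^*$ makes sense and we may regard $\lambda^\prime=(\lambda^\prime_1,\dots,\lambda^\prime_{n-k})$ (padded with zeros). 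Thus $K_{\lambda^\prime}\mathcal{Q}^*\otimes K_\lambda\mathcal{S}\cong K_{\lambda^\vee}\mathcal{S}^*\otimes K_{\lambda^\prime}\mathcal{Q}^*$, and Theorem \ref{BWB} applies with the weight $\alpha=(-\lambda_k,\dots,-\lambda_1,\lambda^\prime_1,\dots,\lambda^\prime_{n-k})\in\mathbb{Z}^n$.

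The key step is to show that the shifted weight $\beta:=\alpha+\rho$, with $\rho=(n-1,n-2,\dots,1,0)$, is a permutation of $\rho$. Writing out the entries, the first $k$ entries of $\beta$ are the integers $n-1-(\lambda_j+k-j)$ for $j=1,\dots,k$, and the last $n-k$ entries are $\lambda^\prime_m+(n-k)-m$ for $m=1,\dots,n-k$. These are exactly the \emph{beta-numbers} attached to $\lambda$ inside the $k\times(n-k)$ box and to its transpose; the standard fact that a partition in a box and its transpose have complementary beta-sets (after the reflection $i\mapsto n-1-i$) shows that together they exhaust $\{0,1,\dots,n-1\}$, each value once. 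In particular the entries of $\beta$ are distinct, so $\alpha$ is regular and the mutation process never reaches a wall; since a mutation merely swaps two adjacent entries of $\beta$, the process terminates at the dominant weight $\tilde\alpha$ with $\tilde\alpha+\rho$ the decreasing rearrangement of $\beta$, i.e. $\tilde\alpha+\rho=\rho$ and $\tilde\alpha=0$. By Theorem \ref{BWB} the only nonzero cohomology is $H^{l(\alpha)}(K_{\lambda^\prime}\mathcal{Q}^*\otimes K_\lambda\mathcal{S})=K_0V^*\cong\mathbb{C}$.

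It remains to check $l(\alpha)=r$, so that this single group lies in degree $r$. Since each mutation swaps adjacent entries of $\beta$, $l(\alpha)$ equals the number of inversions needed to sort $\beta$ into decreasing order, i.e. the number of pairs $i<j$ with $\beta_i<\beta_j$. Both blocks of $\beta$ are already strictly decreasing (because $\lambda_j+k-j$ and $\lambda^\prime_m+(n-k)-m$ are strictly decreasing in $j$ and $m$), so every inversion pairs a first-block entry with a larger last-block entry; if $p_1>\cdots>p_k$ denotes the first block, this count is $\sum_{t=1}^k\#\{q\text{ in the last block}: q>p_t\}$, and since the two blocks partition $\{0,\dots,n-1\}$ one gets $\#\{q: q>p_t\}=(n-1-p_t)-(t-1)=\lambda_{k+1-t}$, whose sum over $t$ is $r$. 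Hence $H^r(K_{\lambda^\prime}\mathcal{Q}^*\otimes K_\lambda\mathcal{S})\cong\mathbb{C}$ and all other cohomology vanishes.

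The main obstacle is entirely bookkeeping: matching the transpose/complement/reflection correspondence for beta-numbers against the particular $\rho$-shift and sign conventions in Theorem \ref{BWB}; once $\{\beta_i\}=\{0,\dots,n-1\}$ is established, everything else is a short computation. As an alternative one can bypass the explicit weight calculation: combining the decomposition of $\Omega^r$ recorded just above the statement, the fact from Theorem \ref{BWB} that each homogeneous summand has cohomology concentrated in a single degree, and the classical Hodge numbers of $G(k,n)$ (namely $h^{p,q}=0$ for $p\ne q$, and $h^{p,p}$ equal to the number of such $\lambda$ with $|\lambda|=p$), a dimension count then forces every summand to have $H^r\cong\mathbb{C}$ and no cohomology in other degrees.
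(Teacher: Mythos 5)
Your proof is correct and follows essentially the same route as the paper's: both reduce to applying Borel--Weil--Bott to the weight $(-\lambda_k,\dots,-\lambda_1,\lambda'_1,\dots,\lambda'_{n-k})$ and showing it mutates to $(0^n)$ in exactly $r$ steps. The only difference is organizational --- where the paper carries out the mutations explicitly and pictorially, you identify the endpoint via the complementary beta-sets of $\lambda$ and $\lambda'$ in the $k\times(n-k)$ box and count the steps as inversions of $\alpha+\rho$, which is the same computation in different bookkeeping.
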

\begin{proof}
in order to apply Theorem \ref{BWB}, we need to write
\begin{equation}
K_{\lambda^\prime} \mathcal{Q}^*\otimes K_{\lambda}\mathcal{S} \cong
K_{\bar{\lambda}} \mathcal{S}^*\otimes K_{\lambda^\prime}\mathcal{Q}^*,
\end{equation}
where $\bar{\lambda} = (-\lambda_k,...,-\lambda_1)$ when $\lambda=(\lambda_1,...,\lambda_k)$.

Now we only need to mutate $(-\lambda_k,...,-\lambda_1,\lambda^\prime_1,...,\lambda^\prime_{n-k})$.

We claim that
we can mutate $(-\lambda_k,...,-\lambda_1,\lambda^\prime_1,...,\lambda^\prime_{n-k})$ into $(0^{n})$, and the number of steps is $r$.

By Theorem \ref{BWB}, this claim implies
\begin{equation}
H^r( K_{\lambda^\prime} \mathcal{Q}^*\otimes K_{\lambda}\mathcal{S} )
\cong K_{(0^n)} V^* \cong \mathbb{C},
\end{equation}
which proves our theorem.

Proof of Claim:

We simply carry out the mutations. It is easily done pictorially. We write down $-\lambda_j$ and draw boxes in columns representing $\lambda_i^\prime$, such that the number of boxes in Column $i$ is exactly $\lambda_i^\prime$:

$$\left(-\lambda_k,...,-\lambda_2,-\lambda_1,
\begin{Young}
&&&&&&      \cr
&&&&    \cr
&&\cr
\end{Young}
\right).
$$

However if we look at the rows of the diagram, row $j$ has exactly
$\lambda_j$ boxes by the fact that $\lambda$ is the transpose of
$\lambda^\prime$.

Hence, we can mutate $\lambda_1$ times  to get
$$
\left(-\lambda_k,...,-\lambda_2,
\begin{Young}
&&&&    \cr
&&\cr
\end{Young},
0 \right),
$$
where the diagram is simply the Young diagram of $(\lambda_2,...,\lambda_k)$ when look at the rows. The $0$ at the end is the mutation result of $-\lambda_{1}$.

Repeating the procedure we get each row annihilated with a $-\lambda_j$, resulting in $(0^{n})$ in $\sum_j \lambda_j =r$ steps.
\end{proof}

Here is a vanishing result for the cohomology of $K_\lambda\mc{S}\otimes K_\mu \mc{Q}^*$.
\begin{thm}\label{vanishing}
On the Grassmannian $G(k,n)$, if $\lambda\in \mc{P}(k,r), \mu \in \mc{P}(n-k,s)$ with $\lambda_1\leq n-k$, then a sufficient condition for $H^{\bullet}(K_\lambda \mc{S}\otimes K_\mu \mc{Q}^*)$ to vanish is that $\mu_j^{\prime} < \lambda_j$ for some $j$.
\end{thm}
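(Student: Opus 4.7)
The plan is to apply the Borel--Weil--Bott theorem (Theorem \ref{BWB}) and show that, under the hypothesis $\mu_j'<\lambda_j$ for some $j$, the shifted weight $\alpha+\rho$ associated to $K_\lambda\mc{S}\otimes K_\mu\mc{Q}^*$ has a repeated entry. Every mutation in the sense of Theorem \ref{BWB} simply transposes two adjacent entries of $\alpha+\rho$, so the multiset of entries is preserved; a repeat at the start therefore prevents $\alpha+\rho$ from ever being made strictly decreasing, no dominant weight can be reached, and the whole cohomology must vanish.

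To set up the weight, I would rewrite $K_\lambda\mc{S}\cong K_{\bar\lambda}\mc{S}^*$ with $\bar\lambda=(-\lambda_k,\ldots,-\lambda_1)$, so that $\alpha=(-\lambda_k,\ldots,-\lambda_1,\mu_1,\ldots,\mu_{n-k})$. After adding the shift $\rho=(n-1,n-2,\ldots,0)$ and subtracting the uniform constant $n-k$, the $n$ entries of $\alpha+\rho$ fall into a strictly decreasing sequence $F=\{\,j-1-\lambda_j : 1\le j\le k\,\}$ from the $\mc{S}$-block and a strictly increasing sequence $G=\{\,\mu_j-j : 1\le j\le n-k\,\}$ from the $\mc{Q}^*$-block. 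Since each of $F$ and $G$ is internally repetition-free, producing a repeat in $\alpha+\rho$ is the same as producing a common element of $F$ and $G$.

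The combinatorial core is to extract such a common element from the hypothesis. I would let $j_0$ be the \emph{smallest} index with $\lambda_{j_0}>\mu_{j_0}'$ and set $a=\lambda_{j_0}$. The assumption $\lambda_1\le n-k$ together with $\mu_{j_0}'\ge 0$ guarantees $1\le a\le n-k$, so $G(a)$ is defined, while $\mu_{j_0}'<a$ immediately translates to $\mu_a\le j_0-1$. When $j_0=1$ this forces $\mu_a=0$, giving $G(a)=-a=F(1)$. When $j_0\ge 2$, the minimality of $j_0$ yields $\lambda_{j_0-1}\le\mu_{j_0-1}'$, which together with the weak decrease of $\lambda$ and $\mu$ forces $\mu_a\ge j_0-1$; combined with the earlier bound this pins down $\mu_a=j_0-1$, whence $G(a)=j_0-1-a=F(j_0)$. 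I do not anticipate a serious obstacle; the argument reduces Borel--Weil--Bott to a short case analysis on the smallest violation of $\lambda\subseteq\mu'$, and the only place the hypothesis $\lambda_1\le n-k$ is actually consumed is in keeping $a$ in the admissible range for $G$.
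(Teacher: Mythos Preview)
Your proof is correct, modulo one harmless slip: $G(j)=\mu_j-j$ is strictly \emph{decreasing} in $j$ (since $\mu_{j+1}\le\mu_j$), not increasing as you wrote. All you actually use is that $F$ and $G$ are internally repetition-free, so the argument stands.

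Your route and the paper's agree in spirit but differ in execution. Both rest on Borel--Weil--Bott, and both single out the least index $j_0$ with $\mu'_{j_0}<\lambda_{j_0}$. The paper carries out the mutations explicitly: it slides $-\lambda_1$, then $-\lambda_2$, etc., rightward through the $\mu$-block and shows the process jams at a pair $(\ldots,-1,0,\ldots)$ precisely at the step governed by $j_0$. You instead invoke the equivalent static criterion (recorded in the Remark following this theorem) that the cohomology vanishes iff $\alpha+\rho$ has a repeated entry, and you exhibit that repeat directly as $F(j_0)=G(a)$ with $a=\lambda_{j_0}$ and $\mu_a=j_0-1$. Your version avoids the bookkeeping of intermediate mutations and is a bit cleaner; the paper's version is more hands-on and parallels its proof of Theorem~\ref{nonvanishing}. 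The combinatorial content---pinning down $\mu_{\lambda_{j_0}}=j_0-1$ from the minimality of $j_0$---is the same either way.
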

Proof: The proof is similar to that of Theorem \ref{nonvanishing}. It boils down to mutating $\tau=(-\lambda_k,...,-\lambda_1, \mu_1,...,\mu_{n-k})$.

If $\mu^{\prime}_1 < \lambda_1$, then one can mutate $\tau_k = -\lambda_1$ to the right for $\lambda_1$ times and get a Young diagram $\tau^{(\lambda_1)}$.
 Note that then $\tau^{(\lambda_1)}$ will satisfy that $\tau^{(\lambda_1)}_{k+\lambda_1-1} = -1$ and $\tau^{(\lambda_1)}_{k+\lambda_1} = 0$. This $(...,-1,0,...)$ shows that $\tau$ cannot be mutated to a decreasing sequence, hence all cohomology vanishes.

If $\mu^{\prime}_i \geq \lambda_i$, $i=1,2,...,j-1$, and $\mu^{\prime}_j < \lambda_j$, then one can perform the above mutations and get $\tau^{(\lambda_1)} = (-\lambda_k,...,-\lambda_2,\mu_1-1,...,\mu_{\lambda_1}-1,0,...)$, and further get $\tau^{(\lambda_1,...,\lambda_{j-1})} = (-\lambda_k,...,-\lambda_j,\mu_1 - (j-1),...)$. Then one runs into the same situation as the $\mu_1^{\prime} < \lambda_1$ case and concludes that the cohomology vanishes.\qed
\begin{cor}
On the Grassmannian $G(k,n)$, if $\lambda\in\mc{P}(k,r), \lambda_1 \leq n-k, \mu^\prime \subsetneq \lambda$, then
\begin{equation}
H^{\bullet}(K_\lambda \mc{S}\otimes K_\mu \mc{Q}^*) =0.
\end{equation}

\end{cor}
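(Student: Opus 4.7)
The plan is to reduce the corollary directly to Theorem \ref{vanishing}, since the hypothesis $\mu'\subsetneq\lambda$ is essentially a repackaging of the sufficient condition given there. First I would unpack $\mu'\subsetneq\lambda$ as the statement that $\mu'_i\le\lambda_i$ for every $i$, together with strict inequality for at least one index; in particular, there exists some $j$ with $\mu'_j<\lambda_j$, which is exactly the hypothesis used in Theorem \ref{vanishing}.

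Next I would check that the remaining standing assumption of Theorem \ref{vanishing}, namely $\mu\in\mc{P}(n-k,|\mu|)$, is automatic in this setting. Since $\mu'_1$ equals the number of nonzero parts of $\mu$, the chain
\begin{equation*}
\mu'_1 \;\le\; \lambda_1 \;\le\; n-k
\end{equation*}
forces $\mu$ to have at most $n-k$ parts, so $\mu$ lies in $\mc{P}(n-k,|\mu|)$ as required. (If $\mu$ had more than $n-k$ parts, the bundle $K_\mu\mc{Q}^*$ would already vanish, so the statement would hold trivially in that degenerate case.)

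With both hypotheses verified, a direct invocation of Theorem \ref{vanishing} gives $H^{\bullet}(K_\lambda\mc{S}\otimes K_\mu\mc{Q}^*)=0$, completing the argument. There is no genuine obstacle here: the only minor point of care is ensuring that $\subsetneq$ is read as proper containment of Young diagrams (not merely the weaker condition $|\mu'|<|\lambda|$), since it is this that guarantees an index $j$ with strict inequality $\mu'_j<\lambda_j$. The corollary is therefore essentially a convenient reformulation of the theorem in the language of Young diagram containment.
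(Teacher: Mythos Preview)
Your argument is correct and matches the paper's approach: the corollary is stated there as an immediate consequence of Theorem \ref{vanishing}, with no separate proof given. Your added check that $\mu'_1\le\lambda_1\le n-k$ forces $\mu\in\mc{P}(n-k,|\mu|)$ is a nice point of rigor that the paper leaves implicit.
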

This shows the vanishing of the intermediate cohomologies of (\ref{E r}), since one has $\mu^\prime \subsetneq \lambda$ whenever $t\neq0$ and $t\neq r$ in the third line of (\ref{long_derivation}). Hence we know the map $\delta_\lambda^r$ in (\ref{delta_lambda}) is an isomorphism for $\lambda$ with $\lambda_1\leq n-k$.

\begin{cor}
If $\alpha\in\mc{P}(k,r), \alpha_1\leq n-k, \beta\in\mc{P}(n-k,r)$ are two Young diagrams with the same weight $r$, and $\beta^\prime \neq \alpha$ then
\begin{equation}
H^{\bullet}(K_\alpha \mc{S}\otimes K_\beta \mc{Q}^*) =0.
\end{equation}
\end{cor}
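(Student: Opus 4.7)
The plan is to reduce this directly to Theorem \ref{vanishing} by a purely combinatorial observation on Young diagrams of equal weight. Specifically, Theorem \ref{vanishing} asserts that $H^{\bullet}(K_\alpha \mc{S}\otimes K_\beta \mc{Q}^*)=0$ as soon as $\alpha_1\leq n-k$ and $\beta^{\prime}_j<\alpha_j$ for some index $j$; the first hypothesis is given, so it suffices to produce such a $j$ from the standing assumption that $\alpha\neq\beta^\prime$ and $|\alpha|=|\beta^\prime|=r$.

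First I would argue by contradiction: suppose no such $j$ exists, i.e.\ $\beta^{\prime}_j\geq \alpha_j$ for every $j\geq 1$. Then $\alpha$ fits inside $\beta^{\prime}$ as a Young diagram, and in particular $|\alpha|\leq|\beta^{\prime}|$. But $|\alpha|=|\beta^{\prime}|=r$ by assumption, so containment of Young diagrams with equal area forces $\alpha=\beta^\prime$, contradicting $\beta^\prime\neq\alpha$. Therefore there must exist some $j$ with $\beta^{\prime}_j<\alpha_j$.

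Having produced such an index, I would invoke Theorem \ref{vanishing} (with $\lambda=\alpha$ and $\mu=\beta$; note $\alpha_1\leq n-k$ is hypothesized) to conclude $H^{\bullet}(K_\alpha \mc{S}\otimes K_\beta \mc{Q}^*)=0$. There is essentially no obstacle: the content is the observation that among partitions of the same weight, inequality forces strict coordinatewise descent at at least one position, which then feeds into the mutation-based vanishing already established. The only thing to double-check is that the indexing conventions for $\beta\in\mc{P}(n-k,r)$ and its transpose are consistent with the convention used in Theorem \ref{vanishing}, but this is immediate from the definitions.
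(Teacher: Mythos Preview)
Your proposal is correct and matches the paper's own argument exactly: the paper simply notes that $|\alpha|=|\beta|$ together with $\beta^\prime\neq\alpha$ forces $\beta^\prime_j<\alpha_j$ for some $j$, and then invokes Theorem~\ref{vanishing}. Your contradiction argument spells out this one-line observation in slightly more detail, but the content is identical.
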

This is clear since $|\alpha|=|\beta|$ and $\beta^\prime \neq \alpha$ implies there exists $j$ such that $\beta^\prime_j < \alpha_j$.

\begin{rem}
A condition (also in \cite{weyman2003cohomology}) equivalent to the vanishing of $H^\bullet(K_\lambda \mc{S}^*\otimes K_\mu\mc{Q}^*)$ is that there exists $\sigma\in \Sigma_n$ (the symmetric group on $n$ letters) $\sigma\cdot\alpha = \alpha$, for $\alpha=(\lambda,\mu)$, where $\sigma\cdot\alpha = \sigma(\alpha+\rho) -\rho$, and $\rho = (n-1, n-2, ...,0)$. Equivalently, this requires $\alpha+\rho$ has repetitive entries.

Let $\nu=(\nu_1,...,\nu_k)$ with $|\nu|=\sum \nu_i<0$. $H^i(K_\nu \mc{S}^*)$ vanishes for all $i$ iff $(\nu_1+n-1,\nu_2+n-2,..,\nu_k+n-k,n-k-1,n-k-2,...,1,0)$ has repetitive entries. Since $\nu_j\geq \nu_{j+1}$, the condition reduces to the existence of at least one $j\in\{1,...,k\}$ such that $0\leq v_j+n-j\leq n-k-1$. In particular, $-(n-k)\leq \nu_k\leq -1$ suffices.
\end{rem}
In particular, we can get

\begin{cor}\label{cor:K_lambda_S}
For $\lambda>0$, namely $\lambda_j > 0$, $\forall j$, we have $\bigoplus_i H^i(K_\lambda \mc{S}) \neq 0$ iff $\exists j$ such that $\lambda_j\geq n-k+j > j \geq \lambda_{j+1}$. Moreover, when this condition holds, we have \begin{equation}
H^{j(n-k)}(K_\lambda \mc{S}) = K_{(-\lambda_k,...,-\lambda_{j+1},-j,...,-j,-\lambda_j+(n-k),...,-\lambda_1+(n-k))}V^*.
\end{equation}
In particular,
$
H^m(K_\lambda \mc{S}) = 0,
$
when $|\lambda|=m$.
\end{cor}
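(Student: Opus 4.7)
The plan is to apply Borel-Weil-Bott (Theorem~\ref{BWB}) to the $GL_n$-weight arising from $K_\lambda\mc{S} \cong K_{\bar\lambda}\mc{S}^* \otimes K_{(0^{n-k})}\mc{Q}^*$, where $\bar\lambda = (-\lambda_k,\ldots,-\lambda_1)$; the relevant weight is
\[
\alpha = \bigl(-\lambda_k,\,-\lambda_{k-1},\ldots,-\lambda_1,\,\underbrace{0,\ldots,0}_{n-k}\bigr).
\]
Mutations ``push'' the negative entries through the block of zeros, and I need to characterize when this process terminates in a dominant weight and what that weight is.

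To streamline the combinatorics, I would pass to $\mu := \alpha + \rho$ with $\rho=(n-1,n-2,\ldots,0)$: a mutation on $\alpha$ is exactly an adjacent transposition of entries of $\mu$ (a direct check from the mutation rule of Theorem~\ref{BWB}). Thus a dominant target exists iff $\mu$ has all distinct entries, in which case the cohomological degree equals the number of inversions needed to sort $\mu$ strictly decreasingly. An explicit computation gives
\[
\mu_i=\begin{cases}-\lambda_{k-i+1}+(n-i),&1\leq i\leq k,\\ n-i,&k+1\leq i\leq n,\end{cases}
\]
so the second block is exactly $\{0,1,\ldots,n-k-1\}$ while the first block is strictly decreasing. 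All entries of $\mu$ are therefore distinct iff no first-block value lands in $\{0,\ldots,n-k-1\}$, equivalently for each $j$ either $\lambda_j\geq n-k+j$ (value $\leq -1$) or $\lambda_j\leq j-1$ (value $\geq n-k$).

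Because $\lambda>0$, the alternative $\lambda_1\leq 0$ is impossible, forcing $\lambda_1\geq n-k+1$. Taking $j$ to be the largest index with $\lambda_j\geq n-k+j$ and using monotonicity of $\lambda$, the dichotomy above collapses to the single condition $\lambda_{j+1}\leq j$ (with the convention $\lambda_{k+1}:=0$), which is the stated ``iff''. The $j$ ``negative'' first-block entries must each be swapped past all $n-k$ second-block entries while no inversions occur within a block, so the number of inversions is exactly $j(n-k)$. Writing out the sorted $\mu$ explicitly and subtracting $\rho$ then yields the weight
\[
(-\lambda_k,\ldots,-\lambda_{j+1},\,\underbrace{-j,\ldots,-j}_{n-k},\,-\lambda_j+(n-k),\ldots,-\lambda_1+(n-k)),
\]
matching the formula in the corollary.

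For the final vanishing, if the nonvanishing condition holds with parameter $j$ then $\sum_{i=1}^{j}\lambda_i\geq\sum_{i=1}^{j}(n-k+i)>j(n-k)$, giving $|\lambda|>j(n-k)$; thus the unique nonzero cohomology sits in degree strictly less than $m=|\lambda|$, and hence $H^m(K_\lambda\mc{S})=0$. The only real work is bookkeeping --- translating the mutation picture into the sort on $\mu=\alpha+\rho$ and indexing the sorted output carefully --- once the translation is set up, the proof is essentially mechanical.
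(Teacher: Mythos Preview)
Your proof is correct and follows precisely the approach the paper intends. The corollary is stated without proof in the paper, but the Remark immediately preceding it sets up exactly the $\alpha+\rho$ criterion you use (cohomology vanishes iff $\alpha+\rho$ has a repeated entry), and your argument carries out the bookkeeping that the paper leaves implicit: identifying the first-block values, splitting them according to whether they fall below $0$ or above $n-k-1$, counting the $j(n-k)$ inversions, and reading off the sorted weight.
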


\begin{thm}\label{vanishing_r_leq_n-k}
For each $\lambda\in \mc{P}(k,r)$ with $\lambda_1\leq n-k$, if $0\subsetneq\nu\subseteq \lambda$, then \begin{equation}
H^\bullet(K_{\lambda/\nu}\mc{S}^*\otimes K_\lambda \mc{S}) = 0.
\end{equation}
\end{thm}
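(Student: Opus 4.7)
The plan is to reduce the vanishing of $H^\bullet(K_{\lambda/\nu}\mc{S}^*\otimes K_\lambda\mc{S})$ to the vanishing of $H^\bullet(K_\mu\mc{S}^*\otimes K_\lambda\mc{S})$ for every partition $\mu\subsetneq\lambda$, and then to resolve $K_\mu\mc{S}^*$ by homogeneous bundles of the form $K_\tau\mc{Q}^*$ (with trivial factors), so that the first corollary following Theorem \ref{vanishing} can be applied to every non-terminal term of the resolution after tensoring with $K_\lambda\mc{S}$.

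For the reduction step, use (\ref{skew_schur}) to write
\[
K_{\lambda/\nu}\mc{S}^*\otimes K_\lambda\mc{S}\;=\;\bigoplus_\mu c^\lambda_{\mu\nu}\,K_\mu\mc{S}^*\otimes K_\lambda\mc{S}.
\]
Nonvanishing of $c^\lambda_{\mu\nu}$ forces $\mu\subseteq\lambda$, and the assumption $\nu\neq 0$ gives $|\mu|=|\lambda|-|\nu|<|\lambda|$, so $\mu\subsetneq\lambda$; in particular $\mu$ is a partition with at most $k$ parts and $\mu_1\leq\lambda_1\leq n-k$. It therefore suffices to prove $H^\bullet(K_\mu\mc{S}^*\otimes K_\lambda\mc{S})=0$ for each such $\mu$.

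To obtain the desired resolution, apply Theorem \ref{schur complex} to the dual tautological sequence $0\to\mc{Q}^*\to\mc{V}^*\to\mc{S}^*\to 0$ with the Young diagram $\mu$. This yields an exact sequence
\[
0\to K_{\mu'}\mc{Q}^* \to (L_{\mu'}\mathbb{E})_{s-1}\to\cdots\to (L_{\mu'}\mathbb{E})_1\to K_\mu\mc{V}^* \to K_\mu\mc{S}^*\to 0,
\]
where $s=|\mu|$ and $(L_{\mu'}\mathbb{E})_t=\bigoplus_{|\xi|=s-t}K_{\mu'/\xi}\mc{Q}^*\otimes K_{\xi'}\mc{V}^*$. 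Tensoring with the locally free sheaf $K_\lambda\mc{S}$ preserves exactness, and splitting the resulting long exact sequence into short exact sequences (or equivalently running the hypercohomology spectral sequence of the resolution) reduces the problem to showing that every term other than $K_\mu\mc{S}^*\otimes K_\lambda\mc{S}$ has vanishing sheaf cohomology.

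I verify this term by term. For $K_{\mu'}\mc{Q}^*\otimes K_\lambda\mc{S}$, the first corollary after Theorem \ref{vanishing} applies with $(\mu')'=\mu\subsetneq\lambda$. Each intermediate term is a direct sum of pieces of the form $K_\tau\mc{Q}^*\otimes K_\lambda\mc{S}$ tensored with the trivial factor $K_{\xi'}V^*$, obtained by applying (\ref{skew_schur}) to $K_{\mu'/\xi}\mc{Q}^*=\bigoplus_\tau c^{\mu'}_{\tau\xi}K_\tau\mc{Q}^*$; since $\tau\subseteq\mu'$ gives $\tau'\subseteq\mu\subseteq\lambda$ and $|\tau'|=|\tau|=|\mu|-|\xi|<|\mu|\leq|\lambda|$, the same corollary yields the vanishing. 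Finally, $K_\mu\mc{V}^*\otimes K_\lambda\mc{S}$ has cohomology $K_\mu V^*\otimes H^\bullet(K_\lambda\mc{S})$, and $H^\bullet(K_\lambda\mc{S})=0$ because the Borel--Weil--Bott weight $(-\lambda_k,\dots,-\lambda_1,0,\dots,0)$ shifted by $\rho=(n-1,\dots,0)$ has a repeated entry: $-\lambda_1+n-k$ lies in $\{0,1,\dots,n-k-1\}$, since $1\leq\lambda_1\leq n-k$. The main obstacle is simply the bookkeeping in the intermediate case, tracing conjugate partitions through the Schur complex; everything else then follows directly from the tools already developed in this section.
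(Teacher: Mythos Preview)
Your argument is correct, and the reduction to $H^\bullet(K_\mu\mc{S}^*\otimes K_\lambda\mc{S})=0$ for $\mu\subsetneq\lambda$ is exactly how the paper begins as well. After that point, however, the paper takes a much shorter route: it writes $K_\lambda\mc{S}=K_{(\lambda_1-\lambda_k,\dots,\lambda_1-\lambda_2,0)}\mc{S}^*\otimes(\wedge^k\mc{S})^{\lambda_1}$, decomposes $K_\mu\mc{S}^*\otimes K_\lambda\mc{S}$ directly into summands $K_\beta\mc{S}^*$, and observes from a weight count that $-\lambda_1\leq\beta_k<0$, so each summand dies by the Borel--Weil--Bott criterion in the Remark preceding Corollary~\ref{cor:K_lambda_S}. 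Your approach instead resolves $K_\mu\mc{S}^*$ via the Schur complex of $0\to\mc{Q}^*\to\mc{V}^*\to\mc{S}^*\to 0$ and kills every term of the resolution using the Corollary to Theorem~\ref{vanishing} together with a direct BWB check for $K_\lambda\mc{S}$. This is a perfectly valid and internally consistent use of the machinery already built in the section, but it is considerably less economical: the paper needs only a single Littlewood--Richardson bound and one application of BWB, whereas you invoke the Schur complex, the skew decomposition a second time, and the vanishing corollary for $\mc{Q}^*$-type bundles. The payoff of the paper's method is brevity; the payoff of yours is that it illustrates how the tools of the section fit together.
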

\begin{proof}
Since $K_{\lambda/\nu}\mc{S}^* =\bigoplus_{\mu}c^{\lambda}_{\mu\nu}K_{\mu}\mc{S}^*$, it suffices to prove that for each $\mu$ such that $0\subseteq\mu\subsetneq \lambda$,
$H^\bullet(K_{\mu}\mc{S}^*\otimes K_\lambda \mc{S}) =0$.

To do this, we denote $K_{\mu}\mc{S}^*\otimes K_\lambda \mc{S}$ as $\bigoplus_\beta K_\beta \mc{S}^*$. Note that $K_\lambda \mc{S} = K_{(\lambda_1 - \lambda_k,...,\lambda_1-\lambda_2,0)}\mc{S}^*\otimes (\wedge^k\mc{S})^{\lambda_1}$, and for each component $K_\alpha\mc{S}^*$ in $K_\mu \mc{S}^*\otimes K_{(\lambda_1 - \lambda_k,...,\lambda_1-\lambda_2,0)}\mc{S}^* =\bigoplus_\alpha K_\alpha \mc{S}^*$, we have $0\leq \alpha_k < \lambda_1$ from the fact that $|\alpha| =|\mu|+|(\lambda_1 - \lambda_k,...,\lambda_1-\lambda_2,0)|$. Since $\beta_k =\alpha_k -\lambda_1$, this implies that $-\lambda_1 \leq \beta_k < 0$, hence $H^\bullet(K_\beta \mc{S}^*)=0$ by the above remark.
\end{proof}

\section{The degenerate locus}\label{sec:degenerate_locus}

By (\ref{seq:define_E}), the deformed tangent bundle $\mc{E}$ is determined by the map $f\in Hom(\mc{S\otimes S^*}, \mc{V\otimes S^*})$. Using the results in Section \ref{sec:VBcohomologies}, we find
\begin{equation}
\begin{array}{ll}
Hom(\mc{S\otimes S^*}, \mc{V\otimes S^*}) 
 & \cong  H^0(\mc{S^*\otimes S}\otimes \mc{V\otimes S^*}), \\
& \cong  H^0(\mc{S^*\otimes S \otimes S^*})\otimes \mc{V}, \\
& \cong  H^0(K_{(2,0,...,0,-1)} \mc{S^*} \oplus K_{(1,1,...,0,-1)} \mc{S^*} \oplus \mc{S^*} \oplus \mc{S^*} )\otimes \mc{V}, \\
& \cong  (0\oplus 0\oplus \mc{V}^*\oplus \mc{V}^*)\otimes \mc{V}, \\
& \cong  \mc{V}^*\otimes \mc{V} \oplus \mc{V}^*\otimes \mc{V} .
\end{array}
\end{equation}

Note that here we used our assumption that $k > 1$.

The map $f$ can be written down explicitly. Locally we have
\begin{equation}\label{f_for_tangent}
f: \lambda \mapsto  \lambda_a^b A^i_j \phi^j_b  + (tr \lambda) B^i_j \phi^j_a,
\end{equation}
where $a, b$ are $S$ indices, and $i, j$ are $V$ indices. 
When ($A^i_j$), $i,j=1,...,n$ form an invertible matrix, 
we can always set $A^i_j =\delta^i_j$ (the Kronecker delta), 
using the $GL(V)$ action on $\mc{V}$. 
So it remains to consider the $B$-deformations. 
We will write $f$ as $f_B$ to indicate the $B$-dependence and view $B$ as a $n\times n $ matrix.

The \textit{degenerate locus} of $B$-deformations is the set of $B$ such that the cokernel of $f_B$ fails to be a deformed tangent bundle.

In this section we work out the degenerate locus.
\begin{lem}\label{lem:invariant_subspaces}
Let $\mathbb{B}$ be a linear operator acting on an $n$-dimensional vector space $V$. Then
for any $k$ eigenvalues (counting multiplicity) $\lambda_{1},...,\lambda_{k}$ of $\mathbb{B}$, one can always find a $k$ dimensional invariant subspace $V_k\subset V$ such that $\lambda_{1},...,\lambda_{k}$ are the eigenvalues of $\mathbb{B}|_{V_k}$.
\end{lem}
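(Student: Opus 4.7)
The plan is to reduce the lemma to a stronger triangularization statement: for \emph{any} prescribed ordering $\mu_1,\dots,\mu_n$ of the eigenvalues of $\mathbb{B}$ counted with algebraic multiplicity, there exists a basis $e_1,\dots,e_n$ of $V$ in which the matrix of $\mathbb{B}$ is upper-triangular with diagonal entries $\mu_1,\dots,\mu_n$ in that order. Granting this claim, the lemma follows immediately. Indeed, fix any ordering of the $n$ eigenvalues of $\mathbb{B}$ whose first $k$ entries are the prescribed $\lambda_1,\dots,\lambda_k$ (this is possible precisely because $\lambda_1,\dots,\lambda_k$ are eigenvalues of $\mathbb{B}$ counted with multiplicity). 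The triangularizing basis then has the property that $V_k := \mathrm{span}(e_1,\dots,e_k)$ is $\mathbb{B}$-invariant, and the matrix of $\mathbb{B}|_{V_k}$ is itself upper-triangular with diagonal $\lambda_1,\dots,\lambda_k$, so these are exactly its eigenvalues.

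To establish the triangularization claim I would induct on $n=\dim V$. The base case $n=1$ is trivial. For the inductive step, use that $\mu_1$ is a root of the characteristic polynomial of $\mathbb{B}$ to choose an eigenvector $e_1$ with $\mathbb{B}e_1=\mu_1 e_1$; this is always possible because we work over the algebraically closed field $\mathbb{C}$. The operator $\mathbb{B}$ descends to an operator $\overline{\mathbb{B}}$ on the quotient $\overline V:=V/\langle e_1\rangle$. Using the block decomposition relative to the invariant subspace $\langle e_1\rangle$, one has $\det(xI-\mathbb{B})=(x-\mu_1)\det(xI-\overline{\mathbb{B}})$, so $\overline{\mathbb{B}}$ has characteristic polynomial $\prod_{i=2}^n(x-\mu_i)$. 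By the induction hypothesis, $\overline V$ admits a basis $\bar e_2,\dots,\bar e_n$ in which $\overline{\mathbb{B}}$ is upper-triangular with diagonal $\mu_2,\dots,\mu_n$. Choosing arbitrary lifts $e_i\in V$ of the $\bar e_i$ and adjoining $e_1$ yields the required basis of $V$.

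There is no serious obstacle; the argument is a standard packaging of the simultaneous flag construction. The only point that deserves emphasis is the freedom to place \emph{any} eigenvalue first at each inductive step, which is exactly what permits a prescribed ordering along the diagonal and ultimately lets us match the invariant subspace $V_k$ to the prescribed $k$-tuple $\lambda_1,\dots,\lambda_k$. Conceptually, the lemma is just the observation that the flag induced by a triangularizing basis produces invariant subspaces whose spectra are determined by consecutive diagonal entries, and nothing more delicate is needed.
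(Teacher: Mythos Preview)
Your argument is correct. The paper itself omits the proof entirely, stating only that it is elementary and noting that the ground field is algebraically closed; your triangularization-by-induction argument is exactly the standard elementary proof the authors had in mind, and the key observation you flag---that at each inductive step one may choose \emph{which} eigenvalue to peel off first, hence prescribe the diagonal ordering---is precisely what makes the lemma work.
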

The proof is elementary and so omitted.  We note that we are working over
an algebraically closed field. 
%

Dual to (\ref{f_for_tangent}), $f_B: \mc{V^*\otimes S}\to \mc{S^*\otimes S}$ can be written as
\begin{equation}\label{map:f_B}
 f_B: c^a_i \mapsto c^a_i v^i_b  + c^d_i B^i_j v^j_d \delta^a_b.
 \end{equation} For the kernel to be a deformed cotangent bundle, we need to ensure the map is of rank $k^2$ at every point of the Grassmannian $G(k,n)$.

Denote the image of $f_B$ as a tuple $(\sigma^a_b)_{a,b=1,...,k}$, then
\begin{equation}
\sigma^a_b =  c^{a^\prime}_{i^\prime} (\delta^a_{a^\prime} v^{i^\prime}_b +\delta^a_b B^{i^\prime}_j v^j_{a^\prime}).
\end{equation}
So $f_B$ is represented by a big $k^2 \times kn$ matrix $M$. We use $(a,b)$ as the row index of $M$, and $(a^\prime, i^\prime)$ as the column index.

Write $M$ as $M_1+M_2$, where
$M_1 = \rm{diag}\{\mathbf{V,...,V}
\}
$ with $\mathbf{V}_{b,i^\prime} =v^{i^\prime}_b$, corresponding to $\delta^a_{a^\prime} v^{i^\prime}_b $, and $M_2$ has non-vanishing rows only when $a=b$, and each such row has entry $B^{i^\prime}_j v^j_{a^\prime}$ at place $(a^\prime, i^\prime)$.

Now we want to know the equivalent condition for $\rm{rank}(M)<k^2$.

For the case $k=1$, this is equivalent to $B^i_j v^j_1 + v^i_1 =0, \forall i$. In matrix language, this says there are solutions for $\textbf{V(B+I)}=0$. So the condition is \begin{equation}\label{eigen_require_for_k=1}
\det \textbf{(I+B)} =0.
\end{equation}

When $k\geq 2$,
we first perform a partial Gauss elimination on $M$: for each $b=2,3,...,k$, subtract the first row from row $(b,b)$. The result matrix $M^\prime$ is identical to $M_1$, except the first row and the first $n$ columns.

Note that $\rm{rank}(M)<k^2$ iff the rows of $M^\prime$ are linearly dependent.

Write down the linear-dependence condition $\sum c_{ab} M^\prime_{(a,b)} = 0$, where $M^\prime_{(a,b)}$ is the $(a,b)-th$ row. Observe that the undeformed $B=0$ case implies that we can assume $$c_{11} =1.$$ Then, because of the `almost-diagonal' nature of $M^\prime$, we can spell the conditions out for each column of $M^\prime$, and repackage them into
\begin{equation}\label{cvvb}
\mathbf{CV = VB},
\end{equation}
where we have $$\mathbf{C}_{ab}=\left\{\begin{matrix}
-\sum_{j=1}^k c_{jj}, & a=b=1,
\\
c_{ab}, & otherwise.
\end{matrix}\right.$$ and $\mathbf{B}_{ji^\prime} = B^{i^\prime}_j$.

Hence we conclude
\begin{thm}\label{thmcvvb}
The $B$-deformation fails to define a vector bundle iff there exists at least one point in $G(k,n)$ such that (\ref{cvvb}) has non-zero solutions.
\end{thm}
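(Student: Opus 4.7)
The theorem essentially packages the matrix bookkeeping set up in the paragraphs above. A $B$-deformation fails to define a vector bundle exactly when the dual map $f_B$ in \eqref{map:f_B} fails to be surjective at some point of $G(k,n)$, equivalently when the $k^2\times kn$ matrix $M=M_1+M_2$ representing $f_B$ at that point has rank strictly less than $k^2$. The task therefore reduces to translating the statement ``$M$ has linearly dependent rows'' into ``$\mathbf{CV}=\mathbf{VB}$ has a nonzero solution'', and back.

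First I would carry out the partial Gauss elimination already indicated: subtracting row $(1,1)$ from each row $(b,b)$ with $b\geq 2$ yields a matrix $M'$ of the same rank as $M$, in which the $B$-dependence is confined to row $(1,1)$. Next I would expand a putative relation $\sum_{a,b} c_{ab} M'_{(a,b)}=0$ column-by-column using the explicit formulas for the entries of $M'$. For columns indexed by $(a',i')$ with $a'\neq 1$ one obtains $\sum_b c_{a'b} v^{i'}_b = -c_{11} B^{i'}_j v^j_{a'}$, and for $a'=1$ the same identity holds after collecting the contributions from the eliminated rows into the coefficient of $v^{i'}_1$. Packaging these relations over the index $b$ gives precisely $\mathbf{CV}=\mathbf{VB}$ with $\mathbf{C}$ as defined in the statement.

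The one step that is not purely mechanical is justifying the normalization $c_{11}=1$. If instead $c_{11}=0$, the relations collapse to $\sum_b c_{a'b} v^{i'}_b=0$ for every $a'$ and $i'$; since the rows of $V$ are linearly independent (this is the very condition that $V$ represents a point of $G(k,n)$), every $c_{ab}$ must then vanish, contradicting the nontriviality of the relation. Hence any nontrivial dependence among the rows of $M'$ has $c_{11}\neq 0$ and can be rescaled so that $c_{11}=1$, producing a nonzero $\mathbf{C}$ with $\mathbf{CV}=\mathbf{VB}$.

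For the converse direction, any nonzero $\mathbf{C}$ satisfying $\mathbf{CV}=\mathbf{VB}$ at some $V\in G(k,n)$ can be read off as a tuple $(c_{ab})$ (with $c_{11}=1$), and the same column-by-column identities now reassemble into a nontrivial linear relation among the rows of $M$, so $\operatorname{rank}(M)<k^2$ at $V$ and the cokernel of $f_B$ fails to be locally free there. The only real obstacle is the $c_{11}$ normalization argument; everything else is bookkeeping driven by the almost-block-diagonal structure of $M'$.
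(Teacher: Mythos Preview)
Your proposal is correct and follows the paper's own route almost verbatim: the paper's ``proof'' of this theorem is precisely the matrix computation laid out in the paragraphs preceding the statement (the theorem is introduced with ``Hence we conclude''), namely the partial Gauss elimination on $M$, the column-by-column unpacking of a row dependence, and the repackaging as $\mathbf{CV}=\mathbf{VB}$. The one place where you add content is the normalization $c_{11}=1$: the paper only says ``the undeformed $B=0$ case implies that we can assume $c_{11}=1$'', whereas you give the cleaner argument that $c_{11}=0$ would force a linear dependence among the rows of $\mathbf V$, impossible since $\mathbf V$ has full row rank $k$ at a point of $G(k,n)$. One small caution for the converse: the solution $\mathbf{C}$ to \eqref{cvvb} comes with the built-in constraint $\operatorname{tr}\mathbf{C}=-1$ (this is what the paper notes immediately after the theorem), so when you ``read off'' the tuple $(c_{ab})$ from $\mathbf{C}$ you should set $c_{11}=1$ and $c_{ab}=\mathbf{C}_{ab}$ for $(a,b)\neq(1,1)$; the trace condition is exactly what makes this consistent with the definition of $\mathbf{C}_{11}$.
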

Note that the constraint on $\mathbf{C}$ is equivalent to $\rm{tr\ }\mathbf{C} =-1$.
It is independent of the choice of the Stiefel coordinates $\mathbf{V}$.
Moreover, it suffices to consider the Jordan canonical form of $B$ since  $\mathbf{CV = VB}$ is equivalent to $\mathbf{CVN = VN N^{-1}BN}$, $\mathbf{N}\in GL(n)$.

\begin{thm}\label{thm:general_degen_loci}
An $n\times n$ matrix $B$ is in the degenerate locus for $G(k,n)$ iff

$\rm{(*)}$ there exists $k$ eigenvalues $\lambda_1,...,\lambda_k$ of $B$ such that $\sum \lambda_i = -1$.
\end{thm}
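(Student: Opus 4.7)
The plan is to reinterpret the algebraic condition from Theorem \ref{thmcvvb} geometrically in terms of $\mathbf{B}$-invariant subspaces of $\mathbb{C}^n$, and then apply Lemma \ref{lem:invariant_subspaces} to translate that back into a statement purely about eigenvalues of $B$.

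By Theorem \ref{thmcvvb}, $B$ lies in the degenerate locus iff there exist a $k\times n$ matrix $\mathbf{V}$ of rank $k$ (i.e.\ a point of $G(k,n)$) and a $k\times k$ matrix $\mathbf{C}$ with $\operatorname{tr}\mathbf{C}=-1$ such that $\mathbf{C V} = \mathbf{V B}$. First I would view $\mathbf{V}$ as a surjection $\mathbb{C}^n \twoheadrightarrow \mathbb{C}^k$ and observe that the identity $\mathbf{CV}=\mathbf{VB}$ forces $\mathbf{B}(\ker\mathbf{V})\subseteq \ker\mathbf{V}$; conversely, once $\ker\mathbf{V}$ is $\mathbf{B}$-invariant, $\mathbf{B}$ descends to an operator on the quotient $\mathbb{C}^n/\ker\mathbf{V}\cong\mathbb{C}^k$ which is precisely $\mathbf{C}$. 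Thus the existence of the pair $(\mathbf{V},\mathbf{C})$ is equivalent to the existence of an $(n-k)$-dimensional $\mathbf{B}$-invariant subspace $W \subset \mathbb{C}^n$ such that $\operatorname{tr}(\mathbf{B}|_{\mathbb{C}^n/W}) = -1$.

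Next I would translate invariance into eigenvalues. If $W$ is $\mathbf{B}$-invariant of dimension $n-k$, then the multiset of eigenvalues of $\mathbf{B}$ on $\mathbb{C}^n$ is the disjoint union of the eigenvalues of $\mathbf{B}|_W$ and those of the induced operator on $\mathbb{C}^n/W$. Writing $\lambda_1,\dots,\lambda_k$ for the eigenvalues on the quotient, the trace condition becomes $\lambda_1+\cdots+\lambda_k = -1$, which is exactly condition $(\ast)$.

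For the ``only if'' direction this is immediate from the preceding paragraph. For ``if'', given any $k$ eigenvalues $\lambda_1,\dots,\lambda_k$ of $\mathbf{B}$ summing to $-1$, Lemma \ref{lem:invariant_subspaces} applied to the \emph{complementary} $n-k$ eigenvalues produces an $(n-k)$-dimensional $\mathbf{B}$-invariant subspace $W$ on which those complementary eigenvalues are realized. Then the induced operator on $\mathbb{C}^n/W$ has eigenvalues $\lambda_1,\dots,\lambda_k$, and taking $\mathbf{V}$ to be any rank-$k$ matrix with $\ker\mathbf{V}=W$ and $\mathbf{C}$ the induced operator produces a solution to $\mathbf{CV}=\mathbf{VB}$ with $\operatorname{tr}\mathbf{C}=-1$, so $B$ is degenerate.

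The only subtlety I anticipate is the careful verification of the bijection between $(\mathbf{V},\mathbf{C})$ pairs satisfying $\mathbf{CV}=\mathbf{VB}$ modulo the $GL(k)$ ambiguity in $\mathbf{V}$, and $(n-k)$-dimensional $\mathbf{B}$-invariant subspaces $W$ of $\mathbb{C}^n$; this is essentially linear algebra but needs to be stated cleanly so that the trace condition transfers unambiguously. Once this is in place, Lemma \ref{lem:invariant_subspaces}, which allows an arbitrary selection of which $n-k$ eigenvalues appear on $W$ (and hence which $k$ appear on the quotient), closes the argument.
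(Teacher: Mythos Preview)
Your proof is correct and follows essentially the same strategy as the paper: reinterpret the equation $\mathbf{CV}=\mathbf{VB}$ from Theorem~\ref{thmcvvb} as an invariant-subspace condition and then invoke Lemma~\ref{lem:invariant_subspaces} to pass to eigenvalues. The only cosmetic difference is a duality in packaging: the paper normalizes $\mathbf{V}$ to $(\mathbf{I}_k\ 0)$, transposes (the ``strange'' convention $B=\mathbf{B}^T$), and works with $k$-dimensional $B$-invariant subspaces, whereas you read off $\ker\mathbf{V}$ directly as an $(n-k)$-dimensional $\mathbf{B}$-invariant subspace and apply the lemma to the complementary eigenvalues --- your route avoids the basis change and transpose and is slightly cleaner.
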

\begin{proof}The $k=1$ case is done before, since this is equivalent to (\ref{eigen_require_for_k=1}). For $k\geq2$ Theorem \ref{thmcvvb} shows that we need to consider the solutions of $\textbf{CV=VB}$ for each $\textbf{V}$, which is a Stiefel coordinate of the point $[\textbf{V}]\in G(k,n)$.

For each $\mathbf{V}$, we can always find a $g\in GL(V)$ such that $\mathbf{V} = \mathbf{V}_0 g$, where $\mathbf{V}_0 = (\mathbf{I_{k}\ 0})$ when written as a block matrix. Let $\mathbf{\tilde{B}}=\mathbf{B}_g = g\mathbf{B}g^{-1}$ So it suffices to consider $\mathbf{CV_0 =V_0\tilde{B}}$, for all $g\in GL(V)/\mathfrak{B}$, where $\mathfrak{B}$ is the Borel subgroup that leaves $[\mathbf{V}_0]$ fixed.

Observe that $\mathbf{CV_0 =V_0\tilde{B}}$ has a solution with $\rm{tr\ }\mathbf{C}=-1$ is equivalent to $\mathbf{\tilde{B}} =\begin{pmatrix}
\mathbf{J}_{11} & 0\\
\mathbf{J}_{21}& \mathbf{J}_{22}
\end{pmatrix} $ in block matrix notion with $\rm{tr\ }\mathbf{J}_{11} =-1$.

Recall that we have the (strange) notation conversion $B=\mathbf{B}^T$. So we can reformulate the equivalent condition for the $B$-deformation fails to give rise a vector bundle on $G(k,n)$ as

$\rm{(**)}$ \textit{
there exists $g\in GL(V)$ such that $\tilde{B}=B_g =g^{-1} B g =\begin{pmatrix}
J_{11} & J_{12}\\0 & J_{22}
\end{pmatrix}$ with $\rm{tr\ }J_{11}=-1$.
}

View $B$ as the matrix representation of a linear operator $\mathbb{B}$ on $V$ under the standard basis $\{e_1,...,e_n\}$.Then $\tilde{B}$ is the matrix representation of the same linear operator in the new basis $\{\tilde{e}_1,...,\tilde{e}_n\}=\{ge_1,...,ge_n\}$. Also note that $\tilde{B}$ is of the block upper triangular form $\begin{pmatrix}
J_{11} & J_{12}\\0 & J_{22}
\end{pmatrix}$ iff $\mathbb{B}V_k \subset V_k$, where $V_i =\rm{span}\{\tilde{e}_1,...,\tilde{e}_i\}$.

So the problem reduces to the determination of $k$ dimensional invariant subspaces of $V$ under the operator $\mathbb{B}$.

Note that $\mathbb{B}|_{V_k}$ is an linear operator whose eigenvalues are also eigenvalues of $\mathbb{B}$. On the other hand, Lemma \ref{lem:invariant_subspaces} says that
for any $k$ eigenvalues (counting multiplicity) $\lambda_{1},...,\lambda_{k}$ of $\mathbb{B}$, one can always find a $k$ dimensional invariant subspace $V_k\subset V$ such that $\lambda_{1},...,\lambda_{k}$ are the eigenvalues of $\mathbb{B}|_{V_k}$.
 This implies that $\rm{tr\ } J_{11}$ will always be a sum of $k$ eigenvalues of $B$, and any $k$ eigenvalues of $\mathbb{B}$ can be the eigenvalues of $J_{11}$. Hence $\rm{(**)}$ is equivalent to $\rm{(*)}$.\end{proof}
\begin{rem} Unlike results in later sections, this result is true for all $B$-deformations, not just generic deformations.
\end{rem}

\begin{thm}\label{thm:degenerate_locus_in_B}
For $G(k,n)$, the degenerate locus can be described as
\begin{equation}
\displaystyle\det ( \wedge^k I + \sum_{j=0}^{k-1} (\wedge^j I)\wedge B\wedge (\wedge^{k-1-j} I) )=0.
\end{equation}
\end{thm}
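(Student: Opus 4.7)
The plan is to identify the $n \times n$ matrix inside the determinant as the coordinate representation of a specific endomorphism of $\wedge^k V$, diagonalize it via an upper‐triangularization of $B$, and then match its spectrum with the condition from Theorem \ref{thm:general_degen_loci}.

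First I would observe that the operator
\[
L_B := \wedge^k I + \sum_{j=0}^{k-1}(\wedge^j I)\wedge B\wedge(\wedge^{k-1-j}I)
\]
is an endomorphism of $\wedge^k V$, and that the summation is precisely the derivation of $B$ acting on degree $k$ of the exterior algebra: concretely,
\[
\Big(\sum_{j=0}^{k-1}(\wedge^j I)\wedge B\wedge(\wedge^{k-1-j}I)\Big)(v_1\wedge\cdots\wedge v_k) = \sum_{i=1}^k v_1\wedge\cdots\wedge Bv_i\wedge\cdots\wedge v_k.
\]
So $L_B$ acts as $v_1\wedge\cdots\wedge v_k \mapsto v_1\wedge\cdots\wedge v_k + \sum_i v_1\wedge\cdots\wedge Bv_i \wedge\cdots\wedge v_k$.

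Next, choose a basis $\{e_1,\ldots,e_n\}$ of $V$ in which $B$ is upper triangular (Schur form) with diagonal entries the eigenvalues $\lambda_1,\ldots,\lambda_n$ (with multiplicity). Order the induced basis $\{e_{i_1}\wedge\cdots\wedge e_{i_k} : i_1<\cdots<i_k\}$ of $\wedge^k V$ lexicographically. A direct check using the derivation formula above shows that $L_B$ is upper triangular in this basis, with the diagonal entry corresponding to $e_{i_1}\wedge\cdots\wedge e_{i_k}$ equal to
\[
1 + \lambda_{i_1} + \lambda_{i_2} + \cdots + \lambda_{i_k}.
\]
Consequently,
\[
\det L_B \;=\; \prod_{i_1<\cdots<i_k}\bigl(1 + \lambda_{i_1}+\cdots+\lambda_{i_k}\bigr).
\]

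Finally, $\det L_B = 0$ if and only if at least one factor vanishes, i.e.\ some sum of $k$ eigenvalues of $B$ equals $-1$. By Theorem \ref{thm:general_degen_loci}, this is exactly condition $(*)$ characterizing the degenerate locus, which proves the claim.

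The only subtle point is the existence of the Schur triangularization and the verification that the derivation is strictly upper triangular on the off‐diagonal part (so that only the diagonal entries $\lambda_i$ of $B$ contribute to the diagonal of $L_B$); since we work over $\mathbb{C}$ this triangularization is standard, and once one fixes the lexicographic order on multi‐indices the upper‐triangularity of the derivation is a routine check because whenever $B e_i = \lambda_i e_i + \sum_{\ell<i} b_{\ell i} e_\ell$, substituting into $v_1\wedge\cdots\wedge Bv_p\wedge\cdots\wedge v_k$ and reordering can only produce wedges with earlier multi‐indices, so the main obstacle is essentially notational rather than conceptual.
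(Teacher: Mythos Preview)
Your proof is correct and follows essentially the same approach as the paper: reduce $B$ to a triangular form (you use Schur, the paper uses Jordan), order the induced basis of $\wedge^k V$ lexicographically, observe that $L_B$ is triangular with diagonal entries $1+\lambda_{i_1}+\cdots+\lambda_{i_k}$, and conclude via the product formula for the determinant together with Theorem~\ref{thm:general_degen_loci}. Your explicit identification of the sum as the degree-$k$ derivation of $B$ on the exterior algebra is a nice conceptual touch that the paper leaves implicit.
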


In particular, when $k=1,2,3$, the expression is
\begin{equation}
\begin{array}{l}
\det(I+B)=0, \\ \det(I\wedge I + B\wedge I + I \wedge B)=0,\\
\det (I\wedge I\wedge I + B\wedge I\wedge I + I\wedge B\wedge I + I\wedge I \wedge B)=0,
\end{array}
\end{equation}
respectively.
\begin{proof}[Proof of Theorem \ref{thm:degenerate_locus_in_B}]
View $B$ as the matrix representation of a linear operator $\mathbb{B}$ on $V$ under the standard basis $\{e_1,...,e_n\}$. It suffices to prove the case when $B$ is of the Jordan canonical form. Suppose the diagonal elements of $B$ are $\lambda_1,...,\lambda_n$. They are also the eigenvalues of $B$. $\{e_{i_1...i_k}:=e_{i_1}\wedge ... \wedge e_{i_k},i_1 < ... < i_k\}$ is a basis of $\wedge^k V$ and we order the base vectors lexicographically. Note that $Be_i = \lambda_i e_i +\epsilon_i e_{i+1}$, where $\epsilon_i$ is either $0$ or $1$ and $(B\wedge I) (e_{i}\wedge e_j) = \lambda_i e_i\wedge e_j + \delta_i e_{i+1}\wedge e_j$, etc. It is then easy to see that $$\wedge^k I + \sum_{j=0}^{k-1} (\wedge^j I)\wedge B\wedge (\wedge^{k-1-j} I)$$ is an upper-triangular matrix and the diagonal element in the row corresponding to $e_{i_1...i_k}$ is $1+\lambda_{i_1}+...+\lambda_{i_k}$. So the determinant is exactly $\prod (1+\lambda_{i_1}+...+\lambda_{i_k})$.
\end{proof}

\section{The moduli}\label{sec:moduli}

In this section we consider the moduli of the deformation of the (co)tangent bundles. For the infinitesimal moduli, we show that the $B$-deformations represent all the Kodaira-Spencer classes of the cotangent bundle. For the global moduli, we show that $B$-deformations indeed generate vector bundles that are not isomorphic to the (co)tangent bundle, which indicates that in physics applications like \cite{Guo2017}, one gets genuine new physical theories when turning on the $B$-deformations. 

First we compute the Kodaira-Spencer classes of our $B$-deformations for the cotangent bundle.

\begin{thm}
Let $1<k<n-1$ and $X=G(k,n)$ be the Grassmannian. Let $\mathbf{B}$ be the space of $B$ matrices not in the degenerate locus, where the origin $0\in\mathbf{B}$ corresponds to the cotangent bundle. Then the Kodaira-Spencer map $T_0\mathbf{B} \to Ext^1(\Omega,\Omega)$ is surjective.
\end{thm}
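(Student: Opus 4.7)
The plan is to reinterpret the Kodaira-Spencer map through the defining sequence (\ref{seq:define_E}) and exploit two Borel-Weil-Bott vanishings. Write $F = \mc{S}\otimes\mc{S}^*$, $G = \mc{V}\otimes\mc{S}^*$, and $E = \mc{E}_0 = T_X$. Since deformations of $\mc{E}$ and of $\Omega = \mc{E}^*$ correspond, $\mathrm{Ext}^1(\Omega, \Omega) \cong \mathrm{Ext}^1(E, E)$, and the K-S class of the $B$-family is the image of $\dot g_B$ under
\[
\mathrm{Hom}(F, G) \;\longrightarrow\; \mathrm{Hom}(F, E) \;\xrightarrow{\;\delta\;}\; \mathrm{Ext}^1(E, E),
\]
where the first map is post-composition with $G \twoheadrightarrow E$ and $\delta$ is the connecting homomorphism obtained by applying $\mathrm{Hom}(-, E)$ to (\ref{seq:define_E}).

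Reading off (\ref{map:f_B}), the $B$-variation factors as $\dot g_B: F \xrightarrow{\mathrm{tr}} \mc{O} \xrightarrow{B} G$, where $B \in \mathfrak{gl}_n$ is regarded as a global section of $G$ via $\mathrm{Hom}(\mc{O}, G) = V \otimes H^0(\mc{S}^*) = V \otimes V^* = \mathfrak{gl}_n$. Pushing to $E$ gives $F \xrightarrow{\mathrm{tr}} \mc{O} \xrightarrow{\bar B} E$ with $\bar B \in H^0(T_X) \cong K_{(1, 0, \ldots, 0, -1)}V^* \cong \mathfrak{sl}_n$ (by Theorem \ref{BWB}), and the map $\mathfrak{gl}_n \to H^0(T_X)$ is the canonical quotient by scalars. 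Next I would show $\delta$ is surjective by verifying $\mathrm{Ext}^1(G, E) = V^* \otimes H^1(\mc{S} \otimes T_X) = 0$: decomposing $\mc{S} \otimes T_X = (\mathfrak{sl}(\mc{S}) \otimes \mc{Q}) \oplus \mc{Q}$, the summand $\mc{Q}$ has already-dominant weight $(0, \ldots, 0, -1)$ and thus contributes only to $H^0$, while the weight $(1, 0, \ldots, 0, -1, 0, \ldots, 0, -1)$ of $\mathfrak{sl}(\mc{S}) \otimes \mc{Q}$ is stuck at the adjacent pair $(-1, 0)$ at positions $(k, k+1)$, killing all cohomology.

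Finally, I would identify the images of the $B$-direction and of $\mathrm{Hom}(G, E)$ as complementary summands of $\mathrm{Hom}(F, E)$. The trace splitting $\mathrm{End}(\mc{S}) = \mc{O} \oplus \mathfrak{sl}(\mc{S})$ together with a parallel Borel-Weil-Bott computation gives $\mathrm{Hom}(F, E) \cong H^0(\mc{S}^* \otimes \mc{Q}) \oplus H^0(\mathfrak{sl}(\mc{S}) \otimes \mc{S}^* \otimes \mc{Q}) \cong \mathfrak{sl}_n \oplus \mathfrak{sl}_n$ (all other Weyl summands of $\mathrm{End}(\mc{S}) \otimes \mc{S}^* \otimes \mc{Q}$ being killed by the same $(-1, 0)$ obstruction at positions $(k, k+1)$). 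The image of the $B$-direction is precisely the first summand (maps factoring through $\mathrm{tr}$), filled out surjectively by $\mathfrak{gl}_n \twoheadrightarrow H^0(T_X) = \mathfrak{sl}_n$. The image of $\mathrm{Hom}(G, E) \cong \mathrm{Hom}(\mc{V}, \mc{Q}) \cong \mathfrak{gl}_n$ under pre-composition with $g$ is the second summand, filled out surjectively by the restriction $A \mapsto A|_{\mc{S}} \in \mathrm{Hom}(\mc{S}, \mc{Q}) = \mathfrak{sl}_n$. Since $\ker \delta$ coincides with this second summand, $\delta$ restricts to an isomorphism on the first, which is exactly the $B$-image, so the Kodaira-Spencer map is surjective (with one-dimensional kernel consisting of the scalar matrices $B = \varepsilon I$). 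The main subtlety will be bookkeeping the two $\mathfrak{sl}_n$ summands $GL(V)$-equivariantly and verifying the mutation-stuck claims for all Weyl summands of $\mathrm{End}(\mc{S}) \otimes \mc{S}^* \otimes \mc{Q}$ beyond the two named; no individual step is deep.
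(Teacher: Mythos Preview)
Your approach is correct and is essentially the dual of the paper's: the paper works on the cotangent side, chasing the identity through the diagram obtained by applying $\mathrm{Hom}(\Omega,-)$ to the three-term deformation sequence and invoking the vanishings $\mathrm{Ext}^1(\Omega,\mc{V}^*\otimes\mc{S})=0$ and $\mathrm{Ext}^1(\mc{S}^*\otimes\mc{S},\mc{S}^*\otimes\mc{S})=0$ to conclude that every class comes from some $\phi_{0,B}$; you work on the tangent side with the dual vanishing $\mathrm{Ext}^1(G,E)=0$ and organize the endgame via the $GL(V)$-isotypic splitting $\mathrm{Hom}(F,E)\cong\mathfrak{sl}_n\oplus\mathfrak{sl}_n$. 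Your version is a little more structural (Schur's lemma in place of coordinate formulas), but the two arguments match step for step.

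One imprecision to correct: the image of $\mathrm{Hom}(G,E)\to\mathrm{Hom}(F,E)$ under precomposition with $g$ is \emph{not} the second summand $H^0(\mathfrak{sl}(\mc{S})\otimes\mc{S}^*\otimes\mc{Q})$ but a diagonal copy of $\mathfrak{sl}_n$. Concretely, $\psi_A\circ g:s\otimes s^*\mapsto q(As)\otimes s^*$ does not vanish on $\mc{O}\subset F$ (it sends $\mathrm{id}_{\mc{S}}$ to the section of $T_X$ determined by the trace-free part of $A$), so its first-summand component is nonzero. This does not damage the argument: what you actually need is that this diagonal $\mathfrak{sl}_n$ has nonzero projection to the \emph{second} summand, so that $\ker\delta$ meets the first summand (your $B$-image) only in zero. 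That is clear, since $\psi_A\circ g$ restricted to $\mathfrak{sl}(\mc{S})$ is nonzero for generic $A$. With this adjustment the conclusion that $\delta$ restricts to an isomorphism on the $B$-image stands.
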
 

\begin{proof}
We follow Section 1.2, Theorem 2.7 of \cite{hartshorne}. We briefly recall from the theorem that, the infinitesimal deformations of a coherent sheaf $\mc{F}$
over the dual numbers $D=\text{Spec\ } \mathbb{C}[\varepsilon]/\varepsilon^2$ can be represented by short exact sequences in the form
\begin{equation}\label{seq:extension}
0 \to \mc{F} \to \mc{F}^\prime \to \mc{F} \to 0,
\end{equation}
which is derived by applying $\mc{F}^\prime\otimes_{\mc{O}_D}-$ to the short exact sequence 
\begin{equation}
0 \to \mathbb{C} \xrightarrow{\cdot \varepsilon} \mathbb{C}[\varepsilon]/\varepsilon^2 \to \mathbb{C} \to 0.
\end{equation}
Here $\mc{F}^\prime$, a coherent sheaf of $\mc{O}_{X\times D}$ modules, 
can be viewed as a coherent sheaf of $\mc{O}_X$ modules, via a splitting map of $\mathbb{C}[\varepsilon]/\varepsilon^2 \to \mathbb{C}$ (like $\mathbb{C} \to \mathbb{C}[\varepsilon]/\varepsilon^2$, $t\mapsto t$). Now \eqref{seq:extension} defines an extension class in $Ext^1(\mc{F},\mc{F})$. To compute it, apply the functor $Hom(\mc{F},-)$ to \eqref{seq:extension}, and take the derived sequence 
\begin{equation}
0 \to Hom(\mc{F}, \mc{F}) \to Hom(\mc{F},\mc{F}^\prime) \to Hom (\mc{F},\mc{F}) \xrightarrow{\delta} Ext^1(\mc{F}, \mc{F})\to ... .
\end{equation}
Then the Kodaira-Spencer class is the image $\delta([id])$ of the connecting homomorphism $\delta$.

Consider the origin $0\in\mathbf{B}$ and a tangent direction $w$. They determine a morphism $\varphi_{0.w}: D\to \mathbf{B}$ uniquely. Varying $B$ in $\mathbf{B}$, the short exact sequence 
\begin{equation}\label{seq:deformed_omega}
0 \to \mc{E}^* \to \mc{V^*\otimes S} \xrightarrow{f_B} \mc{S^*\otimes S} \to 0 
\end{equation}
forms a short exact sequence over $X\times \mathbf{B}$. Pulling back the latter sequence via the morphism $1\times \varphi_{0,w}: X\times D \to X\times \mathbf{B}$, we have
\begin{equation}
0 \to (\mc{E}^*)^\prime \to (\mc{V^*\otimes S})^\prime \xrightarrow{f_B} (\mc{S^*\otimes S})^\prime \to 0.
\end{equation}
Over $B=0$ the bundle $\mc{E}^*$ is the just the cotangent bundle $\Omega$. 
Since $\mc{V^*\otimes S}$ and $\mc{S^*\otimes S}$ are independent from $B$-deformations, we have $(\mc{V^*\otimes S})^\prime \cong \varepsilon(\mc{V^*\otimes S})\oplus (\mc{V^*\otimes S})$ as $\mc{O}_X$ modules, and similarly $(\mc{V^*\otimes S})^\prime \cong \varepsilon(\mc{S^*\otimes S})\oplus(\mc{S^*\otimes S})$.

In view of \eqref{seq:extension}, we have a diagram of exact rows and columns 
\begin{equation}
\xymatrix{
&0\ar[d] &0\ar[d] &0\ar[d]\\
0 \ar[r] &\Omega \ar[r]\ar[d] & (\mc{E}^*)^\prime \ar[r]\ar[d] & \Omega \ar[r]\ar[d]  &0\\
0 \ar[r] & \mc{V^*\otimes S} \ar[r]\ar[d] & \varepsilon(\mc{V^*\otimes S})\oplus (\mc{V^*\otimes S}) \ar[r]\ar[d] & \mc{V^*\otimes S} \ar[r]\ar[d]  &0\\
0 \ar[r] & \mc{S^*\otimes S} \ar[r]\ar[d] & \varepsilon(\mc{S^*\otimes S})\oplus(\mc{S^*\otimes S}) \ar[r]\ar[d] & \mc{S^*\otimes S} \ar[r]\ar[d]  &0\\
&0 &0 &0
}
\end{equation}
Applying $Hom(\Omega,-)$, we have (next page, 
using that by Borel-Weil-Bott $Ext^1(\Omega,\mc{V^*\otimes S}) =0$)
\begin{landscape}
\begin{equation}\label{diag:hom_omega}
\xymatrix{
&0\ar[d] &0\ar[d] &0\ar[d]\\
0 \ar[r] &Hom(\Omega,\Omega) \ar[r]\ar[d] & Hom(\Omega,(\mc{E}^*)^\prime) \ar[r]\ar[d] & Hom(\Omega,\Omega) \ar[r]^{\delta}\ar[d]  &Ext^1(\Omega,\Omega)\ar[d]\\
0 \ar[r] & Hom(\Omega,\mc{V^*\otimes S}) \ar[r]\ar[d]^{f_{I,0}\circ -} & Hom(\Omega,\varepsilon(\mc{V^*\otimes S})\oplus (\mc{V^*\otimes S})) \ar[r]\ar[d] & Hom(\Omega,\mc{V^*\otimes S}) \ar[r]\ar[d]  &Ext^1(\Omega,\mc{V^*\otimes S})\ar[d]\\
0 \ar[r] & Hom(\Omega,\mc{S^*\otimes S}) \ar[r]\ar[d]^{\delta_2} & Hom(\Omega,\varepsilon(\mc{S^*\otimes S})\oplus(\mc{S^*\otimes S})) \ar[r]\ar[d] & Hom(\Omega,\mc{S^*\otimes S}) \ar[r]\ar[d]  &Ext^1(\Omega,\mc{S^*\otimes S})\\
&Ext^1(\Omega,\Omega)\ar[d] &Ext^1(\Omega,(\mc{E}^*)^\prime) &Ext^1(\Omega,\Omega)\\
&0
}
\end{equation}
\end{landscape}
To find the Kodaira-Spencer class $\delta([id])$, we start with the identity map
$\Omega \to \Omega$ and carefully chase the above diagram to find the corresponding map $\Omega \to \mc{S^*\otimes S}$. 
Note that $\Omega\cong \mc{Q^*\otimes S}$. We have a sequence of maps
\begin{equation}\label{map:long_seq}
\begin{array}{ll}
&\Omega \xrightarrow{id} \Omega \xrightarrow{q^*}\mc{V^*\otimes S} \xrightarrow{(0,id)} \varepsilon(\mc{V^*\otimes S})\oplus (\mc{V^*\otimes S}) \to\\
\xrightarrow{T_{(0,w)}f} & \varepsilon(\mc{S^*\otimes S})\oplus (\mc{S^*\otimes S})\xrightarrow{pr_1} \mc{S^*\otimes S},
\end{array}
\end{equation}
where $q^*$ is the dual to the quotient map $\mc{S}\to\mc{V}\xrightarrow{q} \mc{Q}$, and $pr_1$ is the projection to the first factor. Let $y\in\mathbf{B}$. Recall from \eqref{map:f_B} we have
\begin{equation}
\begin{array}{ll}
f: &\mc{V^*\otimes S} \to \mc{S^* \otimes S}\\
& c^a_i(y) \mapsto c^a_i v^i_b + c^d_i (y) B^i_j(y) v^j_d \delta^a_b.
\end{array}
\end{equation}
Take the Taylor expansion of $f$ at $y = 0$ along the $w$ direction and 
discard the higher order terms to get
\begin{equation}
\begin{array}{lrl}
T_{(0,w)}f: 
&c^a_i    &\mapsto c^a_i v^i_b + \varepsilon (c^d_i \partial_w B^i_j v^j_d \delta^a_b),\\
&\varepsilon w^a_i &\mapsto \varepsilon w^a_i v^i_b.
\end{array}
\end{equation}

So, from \eqref{map:long_seq} we get \begin{equation}\label{map:omega2S*xS}
\begin{array}{lrl}
\Omega \xhookrightarrow{q^*} &\mc{V^*\otimes S} &\to \mc{S^*\times S}\\
& c^a_i &\mapsto c^d_i \partial_w B^i_j v^j_d \delta^a_b.
\end{array}
\end{equation}
This determines the class $\alpha=\alpha_B \in Hom(\Omega, \mc{S^*\otimes S})$, whose image $\delta_2(\alpha) =\delta([id])\in Ext^1(\Omega,\Omega)$.

Next we show that varying $B \in \textbf{B}$, $\alpha_B$ maps surjectively to $Ext^1(\Omega,\Omega)$. Hence  deformations in the form of \eqref{seq:deformed_omega} cover all the deformation classes.

Note that any map in $Hom(\mc{V^*\otimes S},\mc{S^*\otimes S})$ can be written as \begin{equation}
\begin{array}{rl}
\phi_{A,B}: \mc{V^*\otimes S} &\to \mc{S^*\otimes S}\\
c^a_i &\mapsto c^a_i A^i_j v^j_b + c^d_i B^i_j v^j_d \delta^a_b.
\end{array}
\end{equation}
Also, a Borel-Weil-Bott computation shows that $Hom(\Omega,\mc{V^*\otimes S}) \cong V^*\otimes V$. So any map in $Hom(\Omega,\mc{V^*\otimes S})$ is of the form $q^*_{\tilde{A}} = \tilde{A}\circ q^*$, where $\tilde{A}:\mc{V^*\otimes S}\to \mc{V^*\otimes S}$ is induced by the linear map $\tilde{A}:\mc{V^*}\to\mc{V^*}$. Hence it is easy to verify that
\begin{equation}
\phi_{A,0}\circ q^* = \phi_{I,0}\circ A \circ q^*  = \phi_{I,0}\circ q^*_A
\end{equation}
from the commutative diagrams
\begin{equation}
\xymatrix{
\Omega\ar[r]^{q^*}\ar@{=}[d] &\mc{V^*\otimes S} \ar[r]^{\phi_{A,0}} \ar[d]^{A} &\mc{S^*\otimes S} \ar@{=}[d]\\
\Omega\ar[r]^{q^*_A} &\mc{V^*\otimes S} \ar[r]^{\phi_{I,0}}  &\mc{S^*\otimes S}.
}
\end{equation}
Thus, by the first column of \eqref{diag:hom_omega}, which we recreate below, this shows that for any $\phi_{A,B}$ with $B=0$, there exists $q^*_A \in Hom(\Omega,\mc{V^*\otimes S})$, such that \begin{equation}
\delta_2(\phi_{A,0}\circ q^*) = \delta_2(\phi_{I,0}\circ q^*_A) = 0.
\end{equation}
Consider the diagram with the row and column being exact:
\begin{equation}
\xymatrix{
&Hom(\mc{V^*\otimes S},\mc{S^*\otimes S})\ar[d]^{-\circ q^*}\\
Hom(\Omega,\mc{V^*\otimes S})\ar[r]^{\phi_{I,0}\circ-} &Hom(\Omega,\mc{S^*\otimes S})\ar[d]\ar[r]^{\delta_2}&Ext^1(\Omega,\Omega)\ar[r]&0\\
&0
}
\end{equation}
The middle column ends with $0 = Ext^1(\mc{S^*\otimes S},\mc{S^*\otimes S})$\footnote{The computation boils down to 
\begin{equation}
\begin{array}{l}
\mc{S^*\otimes S^*\otimes S\otimes S} \cong \mc{O}^2\oplus (K_{1,0,...,0,-1}S^*)^4\oplus K_{2,0,...,0,-2}S^* \\
\oplus K_{1,1,0,...,0,-2}S^*\oplus K_{2,0,...,0,-1,-1}S^* \oplus K_{1,1,0,...,0,-1,-1}S^* .
\end{array}
\end{equation}
}. Similarly the $0$ on the right end is $Ext^1(\Omega, \mc{V^*\otimes S})$.

The diagram shows that the composed map $Hom(\mc{V^*\otimes S},\mc{S^*\otimes S})\to Ext^1(\Omega,\Omega)$ is surjective. So each class of $Ext^1(\Omega,\Omega)$ is represented by some $\phi_{A,B}$. Note that $\delta_2(\phi_{A,B}\circ q^*)=\delta_2((\phi_{A,0}+\phi_{0,B})\circ q^*) = \delta_2(\phi_{0,B}\circ q^*)$, so it is represented by $\phi_{0,B}$. From \eqref{map:omega2S*xS} we see $\alpha =\phi_{0,\partial_wB}\circ q^*$. But we can always pick a direction $w$ such that $\partial_w B(y)|_0$ is $B$, since the degenerate locus is a subvariety away from the origin. So we conclude that by varying $f_B$ in \eqref{seq:deformed_omega}, we have covered all Kodaira-Spencer classes.

\end{proof}

Now we turn to proving the following result:
\begin{thm}
Let $\mc{E}^*$ be the vector bundle defined by $f_B$ as in \eqref{map:f_B}.
Then $\mc{E} \cong T_X$ if and only if $B = \varepsilon I$,
where $\varepsilon$ satisfies $\varepsilon \neq-\frac{1}{k}$.
(This is the constraint for $\mc{E}^*$ to be a vector bundle, by Theorem \ref{thm:general_degen_loci}).
\end{thm}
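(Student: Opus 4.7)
The plan is to handle the two directions separately: the forward direction is a direct construction, and the reverse direction proceeds by lifting the isomorphism $\phi:\mc{E}\xrightarrow{\sim}T_X$ to a chain map between the two defining short exact sequences and then reading off the form of $B$.

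For the forward direction, assume $B=\varepsilon I$ with $\varepsilon\neq -1/k$. I would consider the endomorphism
\begin{equation*}
\alpha_\varepsilon:\mc{S}\otimes\mc{S}^*\longrightarrow\mc{S}\otimes\mc{S}^*,\qquad \lambda\mapsto \lambda + \varepsilon\,\mathrm{tr}(\lambda)\,I_{\mc{S}}.
\end{equation*}
Fiberwise $\alpha_\varepsilon$ is the identity on the traceless component $\mathfrak{sl}(\mc{S})$ and multiplication by $1+k\varepsilon$ on the line of scalars, so the non-degeneracy condition $\varepsilon\neq -1/k$ is exactly what makes $\alpha_\varepsilon$ an automorphism. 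A direct index calculation with \eqref{map:f_B} gives $f_{\varepsilon I} = \alpha_\varepsilon^*\circ f_0$, whence $g_{\varepsilon I} = g_0\circ \alpha_\varepsilon$ on dualizing. The two inclusions then have the same image in $\mc{V}\otimes\mc{S}^*$, yielding the induced isomorphism $\mc{E}\cong T_X$ on cokernels.

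For the reverse direction, suppose $\phi:\mc{E}\xrightarrow{\sim} T_X$. The key lifting step relies on
\begin{equation*}
\mathrm{Ext}^1(\mc{V}\otimes\mc{S}^*,\,\mc{S}\otimes\mc{S}^*) \;=\; V^*\otimes H^1(\mc{S}\otimes \mathrm{End}(\mc{S})) \;=\; 0,
\end{equation*}
which I would verify via Borel--Weil--Bott: decompose $\mathrm{End}(\mc{S}) = \mc{O}\oplus \mathfrak{sl}(\mc{S})$ and then $\mc{S}\otimes \mathfrak{sl}(\mc{S})$ into its irreducible homogeneous summands; every such summand has $\rho$-shifted weight with a repeated entry at positions $k$ and $k+1$, so all the cohomology vanishes. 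Consequently the composition $\mc{V}\otimes\mc{S}^*\to\mc{E}\xrightarrow{\phi}T_X$ lifts to a morphism $\beta:\mc{V}\otimes\mc{S}^*\to\mc{V}\otimes\mc{S}^*$, and since $\beta g_B$ must then land in $g_0(\mc{S}\otimes\mc{S}^*)$, there is a unique $\alpha:\mc{S}\otimes\mc{S}^*\to \mc{S}\otimes\mc{S}^*$ with $\beta g_B = g_0\alpha$. Further Borel--Weil--Bott counts yield $H^0(\mathrm{End}(\mc{V}\otimes\mc{S}^*)) = \mathrm{End}(V)$ and $H^0(\mathrm{End}(\mc{S}\otimes\mc{S}^*))$ spanned by $\mathrm{id}$ and $\mathrm{tr}\otimes I_{\mc{S}}$, so $\beta = A\otimes \mathrm{id}_{\mc{S}^*}$ for some $A\in\mathrm{End}(V)$ and $\alpha(\lambda) = a\lambda + b\,\mathrm{tr}(\lambda)I_{\mc{S}}$ for some $a,b\in\mathbb{C}$.

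Expanding $\beta g_B = g_0\alpha$ with the dualized formula $g_B(\lambda) = v\lambda + \mathrm{tr}(\lambda)\,Bv$ produces the equality
\begin{equation*}
(Av - av)\lambda \;=\; \mathrm{tr}(\lambda)\bigl(bv - ABv\bigr)
\end{equation*}
of morphisms $\mc{S}\to\mc{V}$, holding for every $\lambda\in \mathrm{End}(\mc{S})$. Restricting to traceless $\lambda$ and using that such endomorphisms of a fixed fiber $V_k$ have images covering all of $V_k$ forces $A|_{V_k} = a\,\mathrm{id}_{V_k}$ for every $k$-plane $V_k\subset V$; as every vector of $V$ lies in some $k$-plane, $A = aI_V$. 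Substituting back and taking $\lambda = I_{\mc{S}}$ gives $aBv = bv$, and the same fiber-by-fiber argument yields $B = (b/a)I_V$ once $a\neq 0$. The non-vanishing of $a$ is immediate: $a=0$ propagates to $b=0$, hence $\beta =0$, so the composition $\mc{V}\otimes\mc{S}^*\to \mc{E}\xrightarrow{\phi}T_X$ would vanish, contradicting surjectivity of $\mc{V}\otimes\mc{S}^*\to\mc{E}$ together with $\phi$ being an isomorphism onto a bundle of positive rank. Therefore $B = \varepsilon I$ with $\varepsilon = b/a$, and $\varepsilon\neq -1/k$ is automatic since $\mc{E}^*$ was assumed to be a bundle (Theorem \ref{thm:general_degen_loci}). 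The main obstacle is the $\mathrm{Ext}^1$ vanishing above; once it is in place, the rest of the argument is a direct unwinding of $\beta g_B = g_0\alpha$.
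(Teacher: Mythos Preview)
Your proof is correct, but your reverse direction takes a genuinely different route from the paper's. The paper shows directly that $\mathrm{Hom}(\mc{E},T_X)=H^0(\mc{E}^*\otimes\mc{Q}\otimes\mc{S}^*)$ vanishes whenever $B$ is not a scalar: it resolves the $\mc{Q}$ factor by $0\to\mc{S}\to\mc{V}\to\mc{Q}\to 0$, reduces to computing the kernel of $H^0(\mc{V^*\otimes S\otimes V\otimes S^*})\to H^0(\mc{S^*\otimes S\otimes V\otimes S^*})$ induced by $f_B$, and reads off the scalar condition from explicit coordinates. You instead lift the isomorphism $\phi$ to a chain map between the defining presentations via the vanishing of $\mathrm{Ext}^1(\mc{V}\otimes\mc{S}^*,\mc{S}\otimes\mc{S}^*)$, and then solve the resulting equation $\beta g_B=g_0\alpha$ using the identification of global endomorphisms of $\mc{V}\otimes\mc{S}^*$ and $\mc{S}\otimes\mc{S}^*$. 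Your approach is more conceptual and avoids introducing $\mc{Q}$; the paper's approach fits naturally into its running computational framework and simultaneously yields the dimension of $\mathrm{Hom}(\mc{E},T_X)$ in all cases. Both lean on Borel--Weil--Bott, just for different bundles.

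One small inaccuracy worth fixing: in your sketch of the $\mathrm{Ext}^1$ vanishing you assert that every irreducible summand of $\mc{S}\otimes\mathrm{End}(\mc{S})$ has a $\rho$-shifted weight repetition at positions $k$ and $k{+}1$. That is true for the summands $K_{(0,\dots,0,-1)}\mc{S}^*$ and $K_{(1,0,\dots,0,-1,-1)}\mc{S}^*$, but for $K_{(1,0,\dots,0,-2)}\mc{S}^*$ the repeated entries occur at positions $k$ and $k{+}2$ (using $k<n-1$). The vanishing conclusion is unaffected, but the justification should be adjusted.
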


\begin{proof}

One direction is easy: When $B =\varepsilon I$ and $\varepsilon\neq -\frac{1}{k}$,
we have a map of short exact sequences
\begin{equation}   \label{eq:trivb-diagram}
\xymatrix{
0\ar[r]&\Omega\ar[r]\ar[d] &\mc{V}^*\otimes \mc{S} \ar[r]^{f}\ar@{=}[d] &\mc{S}^*\otimes \mc{S}\ar[d]^{\cong}_{h}\ar[r]&0\\
0\ar[r]&\mc{E}^*\ar[r]&\mc{V}^*\otimes \mc{S} \ar[r]^{f_{B}} &\mc{S}^*\otimes \mc{S}
\ar[r]&0,
}
\end{equation}
where $h$ is given by $h^a_b: \sigma^a_b \mapsto \sigma^a_b + \varepsilon (\text{tr\ } \sigma) \delta^a_b$. This induces an isomorphism $\mc{E}\cong T_X$.

For the other direction, note that an isomorphism $\tilde{\sigma}$:
\begin{equation}
\xymatrix{
\mc{E}\ar[r]^{\tilde{\sigma}}\ar[d] &T_X\ar[d]\\
X\ar[r]^{\sigma} &X,
}
\end{equation}
induces a non-zero element $T_{\sigma^{-1}}\circ \tilde{\sigma} \in Hom(\mc{E}, T_X)$, where $T_{\sigma^{-1}}$ is induced from the isomorphism $\sigma^{-1}: X\to X$.
So it suffices to how that when $B^j_k\neq \varepsilon\delta^j_k$, Hom$(\mc{E},T_X) = 0$. 

We sketch the computation.

First note that Hom$(\mc{E},T_X) \cong H^0(\mc{E}^*\otimes T_X) \cong H^0(\mc{E^*\otimes Q\otimes S^*}).$ Furthermore, $\mc{E^*\otimes Q\otimes S^*}$ fits in the short exact sequence
\begin{equation}
0 \to \mc{E^*\otimes Q\otimes S^*} \to \mc{V^*\otimes S\otimes Q\otimes S^*} \to \mc{S^*\otimes S\otimes Q\otimes S^*} \to 0.
\end{equation}
To compute the cohomologies of $\mc{V^*\otimes S\otimes Q\otimes S^*} $ and $ \mc{S^*\otimes S\otimes Q\otimes S^*} $, we use two short exact sequences
\begin{equation}\label{seq:vsss}
0 \to \mc{V^*\otimes S\otimes S\otimes S^*} \to \mc{V^*\otimes S\otimes V\otimes S^*} \to \mc{V^*\otimes S\otimes Q\otimes S^*} \to 0
\end{equation}
and
\begin{equation}\label{seq:ssss}
0 \to \mc{S^*\otimes S\otimes S\otimes S^*} \to \mc{S^*\otimes S\otimes V\otimes S^*} \to \mc{S^*\otimes S\otimes Q\otimes S^*} \to 0.
\end{equation}
Using Borel-Weil-Bott, we find
\begin{equation}
\begin{array}{l}
H^0(\mc{V^*\otimes S\otimes S\otimes S^*}) = 0,\\
H^0(\mc{V^*\otimes S\otimes V\otimes S^*}) \cong V^*\otimes V,\\
H^1(\mc{V^*\otimes S\otimes V\otimes S^*}) = 0.
\end{array}
\end{equation}
The last one uses the assumption $n-k>1$.
The long exact sequence of cohomology associated to (\ref{seq:vsss}) then implies
\begin{equation}
H^0(\mc{V^*\otimes S\otimes Q\otimes S^*}) \cong H^0(\mc{V^*\otimes S\otimes V\otimes S^*}) \cong V^*\otimes V.
\end{equation}
Similarly we have
\begin{equation}
\begin{array}{l}
H^0(\mc{S^*\otimes S\otimes S\otimes S^*}) \cong H^0(\Sym^2(\mc{S^*\otimes S})) \cong \mathbb{C}^2,\\
H^0(\mc{S^*\otimes S\otimes V\otimes S^*}) \cong V^*\otimes V \oplus V^*\otimes V.
\end{array}
\end{equation}

So
\begin{equation}
H^0(\mc{E^*\otimes Q\otimes S^*}) = {\rm Ker \ } (H^0(\mc{V^*\otimes S\otimes Q\otimes S^*}) \to H^0(\mc{S^*\otimes S\otimes Q\otimes S^*}))
\end{equation}
can be computed by the diagram
{
\newcommand{\A}{H^0(\mc{V^*\otimes S \otimes V\otimes S^*})}
\newcommand{\B}{H^0(\mc{S^*\otimes S \otimes V\otimes S^*})}
\newcommand{\nA}{item}
\newcommand{\nB}{H^0(\mc{S^*\otimes S \otimes V\otimes S^*})}
\begin{equation}
\xymatrix{
\A      \ar@{->}[r]^{f_B}       &       \B
\\
&       \nB \ar[u]^{f_0}.
}
\end{equation}
Let $X^{jb}_{id}\in H^0(\mc{V^*}_{(i)}\otimes \mc{S}_{(b)} \otimes \mc{V}_{(j)}\otimes \mc{S}^*_{(d)}), Y^{jb}_{ad}\in H^0(\mc{S^*}_{(a)}\otimes \mc{S}_{(b)} \otimes \mc{V}_{(j)}\otimes \mc{S}^*_{(d)})$ and $Z^{cb}_{ad}\in H^0(\mc{S^*}_{(a)}\otimes \mc{S}_{(b)} \otimes \mc{V}_{(c)}\otimes \mc{S}^*_{(d)})$ be coordinates of the corresponding sections. The subscripts of the bundles indicate the indices used for their sections.

Let $X^{jb}_{id}=t^j_i\delta^b_d$ and $Z^{cb}_{ad} =u_1\delta^c_a\delta^b_d + u_2 \delta^b_a\delta^c_d$. With the concrete expressions of $f_B$ and $f_0$ (which is $f_B$ with $B=0$) as in (\ref{map:f_B}), it is then straightforward to show that $$H^0(\mc{E^*\otimes Q\otimes S^*})\neq 0 {\rm\ iff\ } B^j_k \neq \varepsilon \delta^j_k , {\rm \ where\ }\varepsilon = \frac{u_2}{u_1}.$$
}
\end{proof}
A similar argument shows that $Hom(T_X, \mc{E})$ is always non-trivial. Together with the fact that the tangent bundle of the Grassmannian $G(k,n)$ is stable, we have
\begin{thm}
When $B\neq \varepsilon I$, the corresponding deformed tangent bundle $\mc{E}$ is not Gieseker semistable.
\end{thm}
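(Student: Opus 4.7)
The plan is to combine the two ingredients noted just before the statement -- the (Mumford, hence Gieseker) stability of $T_X$ on the Grassmannian $G(k,n)$, which is a classical fact for rational homogeneous spaces, and the non-vanishing of $\mathrm{Hom}(T_X,\mc{E})$ asserted by the authors -- with the preceding isomorphism theorem. The strategy is that if $\mc{E}$ were Gieseker semistable, the usual ``stable-to-semistable'' comparison would force any non-zero morphism $T_X \to \mc{E}$ to be an isomorphism; the preceding theorem would then force $B = \varepsilon I$ for some scalar $\varepsilon$, contradicting the hypothesis.

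The preliminary step is to observe that $T_X$ and $\mc{E}$ have the same reduced Hilbert polynomial. Both are cokernels of maps $\mc{S \otimes S^*} \to \mc{V \otimes S^*}$, so they share the same rank $k(n-k)$ and the same total Chern character; hence $P_{T_X}(m) = P_{\mc{E}}(m)$ for any fixed ample polarization, and in particular $p_{T_X} = p_{\mc{E}}$.

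For the main step, pick $0 \neq \phi \in \mathrm{Hom}(T_X, \mc{E})$ and set $K := \ker \phi$. If $K$ were non-zero, then stability of $T_X$ would give $p_K < p_{T_X}$, and additivity of $P$ in the short exact sequence $0 \to K \to T_X \to \mathrm{im}\,\phi \to 0$ would force $p_{\mathrm{im}\,\phi} > p_{T_X} = p_{\mc{E}}$, contradicting Gieseker semistability of $\mc{E}$ at the subsheaf $\mathrm{im}\,\phi \subset \mc{E}$. Hence $\phi$ is injective; then the quotient satisfies $P_{\mathrm{coker}\,\phi} = P_{\mc{E}} - P_{T_X} = 0$, which forces $\mathrm{coker}\,\phi = 0$, and $\phi$ is an isomorphism. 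By the previous theorem this forces $B$ to be a scalar matrix, contradicting $B \neq \varepsilon I$, so $\mc{E}$ cannot be Gieseker semistable.

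The only non-formal ingredient, and the main anticipated obstacle, is the non-vanishing of $\mathrm{Hom}(T_X,\mc{E})$ asserted as ``a similar argument'' in the paper. To verify it one would imitate the Borel--Weil--Bott computation just performed for $\mathrm{Hom}(\mc{E},T_X)$: apply $\mathrm{Hom}(-,T_X)$ to $0 \to \mc{E}^* \to \mc{V^* \otimes S} \to \mc{S^* \otimes S} \to 0$, compute $H^0(\mc{E}^* \otimes T_X) = H^0(\mc{E}^* \otimes \mc{Q \otimes S^*})$ via the auxiliary sequences used for the previous theorem, and exhibit an explicit non-zero section (the one induced by the identity of $T_X$ at $B=0$, which persists under the $B$-deformation). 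Everything else -- stability of $T_X$ and the ``stable-to-semistable'' lemma -- is classical.
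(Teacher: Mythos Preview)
Your main argument is correct and is exactly the paper's approach: the paper simply cites \cite{huybrechts1997geometry}, Proposition~1.2.7, and your two middle paragraphs are precisely an unpacking of that proposition applied with $F=T_X$ stable and $G=\mc{E}$ assumed semistable, together with the observation that $P_{T_X}=P_{\mc{E}}$ (not just the reduced polynomials) forces the injection to be an isomorphism.

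One correction to your final paragraph: the computation you sketch is the one the paper just carried out for $\mathrm{Hom}(\mc{E},T_X)=H^0(\mc{E}^*\otimes T_X)$, which was shown to \emph{vanish} when $B\neq\varepsilon I$. What you need is $\mathrm{Hom}(T_X,\mc{E})=H^0(\Omega\otimes\mc{E})$, which is a different group. To get it, tensor the sequence $0\to\mc{S\otimes S^*}\to\mc{V\otimes S^*}\to\mc{E}\to 0$ defining $\mc{E}$ (not its dual) with $\Omega=\mc{Q^*\otimes S}$, and run the analogous Borel--Weil--Bott analysis; this is what the paper means by ``a similar argument.'' Your intuition that the identity of $T_X$ at $B=0$ deforms is the right one, but the functor and the sequence in your sketch point in the wrong direction.
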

\begin{proof}
This is a direct corollary of \cite{huybrechts1997geometry}, Proposition 1.2.7.
\end{proof}

\section{The polymology as a quotient}\label{sec:quotient_commutativity}
From the polymology of the tangent bundle and  semi-continuity we know that
for generic deformations $\mc{E}$ of the tangent bundle, $H^q(\wedge^p\mc{E}^*) = 0$ for $p\neq q$ and  we only need to focus on $H^r(\wedge^r\mc{E}^*)$. This could go wrong on some subvariety of the $B$-parameter space. We call this subvariety $\mc{B}_{\text{jump}}$ and study it in Section \ref{sec:non-generic}.

Since
\begin{equation}
 0 \to \mc{E}^* \to \mc{V^*\otimes S} \to \mc{S^*\otimes S}\to 0.
 \end{equation}
Taking the Koszul resolution, we have the long exact sequence
\begin{equation}\label{seq:Koszul}
0 \to \wedge^r \mc{E}^* \to \wedge^r (\mc{V^*\otimes S}) \to \wedge^{r-1} (\mc{V^*\otimes S}) \otimes ... \to \Sym^r(\mc{S^*\otimes S}) \to 0.
\end{equation}

We then try to describe the cohomology of $\wedge^r\mc{E}^*$ by that of $\mc{S^*\otimes S}$.

We use a familiar strategy: break \eqref{seq:Koszul} into short exact sequences and in the end it boils down to understand the kernel of $H^0(\Sym^r(\mc{S^*\otimes S})) \to H^r(\wedge^r\mc{E}^*)$.

Moreover, this leads to a description of the polymology as a quotient ring:

\begin{thm}\label{quotient ring thm}
The polymology ring is a quotient of $H^0(\Sym^*(\mc{S}^*\otimes\mc{S}))$.
\end{thm}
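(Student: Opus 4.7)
The plan is to exhibit a surjective graded ring homomorphism
\[
\Phi : H^0(\Sym^*(\mc{S}^*\otimes \mc{S})) \twoheadrightarrow H^*_{\mc{E}}(X),
\]
whose kernel is then the ideal of relations defining the polymology. I would construct $\Phi$ degree by degree. Break the Koszul resolution \eqref{seq:Koszul} of $\wedge^r \mc{E}^*$ into short exact sequences
\[
0 \to K_{t+1} \to \wedge^{r-t}(\mc{V}^*\otimes\mc{S}) \otimes \Sym^t(\mc{S}^*\otimes\mc{S}) \to K_t \to 0,
\]
with $K_0 = \Sym^r(\mc{S}^*\otimes\mc{S})$ and $K_r = \wedge^r \mc{E}^*$, and define $\Phi^r : H^0(\Sym^r(\mc{S}^*\otimes\mc{S})) \to H^r(\wedge^r \mc{E}^*)$ as the composition of the connecting homomorphisms $H^t(K_t) \to H^{t+1}(K_{t+1})$. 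Remark \ref{rem1} decomposes the middle terms into bundles of the form $K_\mu \mc{Q}^* \otimes K_\lambda \mc{S}$ with $\mu' \subsetneq \lambda$, whose cohomology vanishes by Theorem \ref{vanishing} and its corollaries. Chasing the long exact sequences then shows that each connecting map is surjective onto the cohomology of the next kernel, so $\Phi^r$ is surjective.

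Next I would verify multiplicativity: for $\alpha \in H^0(\Sym^p(\mc{S}^*\otimes\mc{S}))$ and $\beta \in H^0(\Sym^q(\mc{S}^*\otimes\mc{S}))$, I want
\[
\Phi^{p+q}(\alpha \cdot \beta) = \Phi^p(\alpha) \cup \Phi^q(\beta),
\]
where the left product uses the canonical $\Sym^p \otimes \Sym^q \to \Sym^{p+q}$ and the right product is the cup product in the polymology. This is a Cech-level statement: one lifts $\alpha$ and $\beta$ through the Koszul resolutions of $\wedge^p\mc{E}^*$ and $\wedge^q\mc{E}^*$ to Cech cocycles, and their tensor product, composed with the projection $\wedge^p\mc{E}^*\otimes\wedge^q\mc{E}^* \to \wedge^{p+q}\mc{E}^*$, represents $\Phi^p(\alpha) \cup \Phi^q(\beta)$. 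On the other side, the three Koszul resolutions of $\wedge^p\mc{E}^*$, $\wedge^q\mc{E}^*$, $\wedge^{p+q}\mc{E}^*$ fit into a compatible bicomplex via the symmetric-power multiplication, and a diagram chase identifies the two Cech lifts of $\alpha\cdot\beta$ up to coboundary. This compatibility is essentially the technical content promised in Appendix A.

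The main obstacle is this multiplicativity step: surjectivity follows almost automatically from the Borel-Weil-Bott vanishing established in Section \ref{sec:VBcohomologies}, but checking that the iterated connecting maps intertwine the symmetric-power product on $H^0(\Sym^*(\mc{S}^*\otimes\mc{S}))$ with the cup product in the polymology requires the careful comparison of Cech representatives mentioned above. Once both pieces are in place, the theorem follows immediately as $H^*_{\mc{E}}(X) \cong H^0(\Sym^*(\mc{S}^*\otimes\mc{S}))/\ker\Phi$.
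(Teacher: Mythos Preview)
Your overall architecture --- build the degree-$r$ map $\Phi^r$ as the iterated connecting homomorphism along the Koszul resolution \eqref{seq:Koszul}, then verify that $\Phi$ respects products --- is exactly the paper's strategy. But two of your supporting claims are off.

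First, your surjectivity argument misreads Remark~\ref{rem1}. The middle terms of \eqref{seq:Koszul} are $Z_j = \wedge^j(\mc{V}^*\otimes\mc{S})\otimes\Sym^{r-j}(\mc{S}^*\otimes\mc{S})$; when Remark~\ref{rem1} decomposes the associated complex \eqref{the_true_LEG} over $\lambda$, the summands involve $K_{\lambda/\nu}\mc{S}^*\otimes K_{\nu'}\mc{V}^*\otimes K_\lambda\mc{S}$, not $K_\mu\mc{Q}^*\otimes K_\lambda\mc{S}$. The bundle $\mc{Q}^*$ appears only at the leftmost end of each $\lambda$-piece, so Theorem~\ref{vanishing} does not apply to the intermediate terms. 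The relevant vanishing for those is Theorem~\ref{vanishing_r_leq_n-k}, and it only covers $\lambda_1\le n-k$; for $\lambda_1>n-k$ intermediate cohomology need not vanish (this is precisely where the kernel $\mathbb{K}_r$ comes from). In fact the paper does not prove surjectivity inside this theorem at all --- it takes it as understood from the preceding paragraph and from the generic-$B$ hypothesis, with the dimension count completed later in Theorem~\ref{thm:full_module_description}.

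Second, and more substantively, your multiplicativity sketch is missing the concrete map of complexes, and your pointer to Appendix~A is misplaced: Lemma~\ref{lemma:tech-app} characterizes when an element lies in $\ker\Phi^r$ and says nothing about compatibility with products. The paper's mechanism for the second commutative square \eqref{diag:tensor_to_exterior} is the Littlewood--Richardson decomposition of Schur complexes, $L_s\mathbb{E}\otimes L_t\mathbb{E}\cong L_{s+t}\mathbb{E}\oplus(\text{other summands})$, which yields an explicit inclusion $u:L_{s+t}\mathbb{E}\hookrightarrow L_s\mathbb{E}\otimes L_t\mathbb{E}$. Dualizing $u$ and extending by $\wedge^{s+t}\mc{E}^*\to\wedge^s\mc{E}^*\otimes\wedge^t\mc{E}^*$ gives a genuine map of long exact sequences \eqref{seq:tensor}$\to$\eqref{seq:koszul_s+t}, and functoriality of connecting maps then forces $\delta^{s+t}\circ(\text{mult}) = (\wedge\text{-proj})\circ\delta^{s+t}_\otimes$. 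Your ``compatible bicomplex via symmetric-power multiplication'' gestures in this direction but does not supply the map; without the Schur-complex splitting, there is no obvious reason the tensor product of the two Koszul resolutions should map to the Koszul resolution of $\wedge^{s+t}\mc{E}^*$ compatibly with the endpoint projections.
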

\begin{proof}
We need to show that the following diagram is commutative:

{
\newcommand{\A}{H^0(\Sym^s(\mc{S^*\otimes S})) \times H^0(\Sym^t(\mc{S^*\otimes S}))}
\newcommand{\B}{H^0(\Sym^s(\mc{S^*\otimes S})) \otimes H^0(\Sym^t(\mc{S^*\otimes S}))}
\newcommand{\nA}{H^s(\wedge^s \mc{E}^*) \times  H^t(\wedge^t \mc{E}^*)}
\newcommand{\nB}{H^s(\wedge^s \mc{E}^*) \otimes  H^t(\wedge^t \mc{E}^*)}
\newcommand{\C}{H^0(\Sym^s(\mc{S^*\otimes S})\otimes\Sym^t(\mc{S^*\otimes S}))}
\newcommand{\nC}{H^{s+t}(\wedge^s\mc{E}^*\otimes\wedge^t\mc{E}^*)}
\newcommand{\D}{H^0(\Sym^{s+t}(\mc{S^*\otimes S}))}
\newcommand{\nD}{H^{s+t} (\wedge^{s+t}\mc{E}^*)}
\begin{equation}\label{diag:quotient_ring_structure}
\xymatrix{
\A      \ar[r]  \ar[d]      &       \D  \ar[d]
\\
\nA     \ar[r]              &       \nD.
}
\end{equation}
}

We show it by proving the commutativity of two diagrams,
{
\newcommand{\A}{H^0(\Sym^s(\mc{S^*\otimes S})) \times H^0(\Sym^t(\mc{S^*\otimes S}))}
\newcommand{\B}{H^0(\Sym^s(\mc{S^*\otimes S})) \otimes H^0(\Sym^t(\mc{S^*\otimes S}))}
\newcommand{\nA}{H^s(\wedge^s \mc{E}^*) \times  H^t(\wedge^t \mc{E}^*)}
\newcommand{\nB}{H^s(\wedge^s \mc{E}^*) \otimes  H^t(\wedge^t \mc{E}^*)}
\newcommand{\C}{H^0(\Sym^s(\mc{S^*\otimes S})\otimes\Sym^t(\mc{S^*\otimes S}))}
\newcommand{\nC}{H^{s+t}(\wedge^s\mc{E}^*\otimes\wedge^t\mc{E}^*)}
\newcommand{\D}{H^0(\Sym^{s+t}(\mc{S^*\otimes S}))}
\newcommand{\nD}{H^{s+t} (\wedge^{s+t}\mc{E}^*)}
\begin{equation}\label{diag:delta_functor}
\xymatrix{
\A      \ar@{->}[r] \ar[d]      &       \C  \ar[d]
\\
\nA     \ar[r]              &       \nC
}
\end{equation}
and
\begin{equation}\label{diag:tensor_to_exterior}
\xymatrix{
\C      \ar[r]  \ar[d]^{\delta^{s+t}_\otimes}       &       \D  \ar[d]^{\delta^{s+t}}
\\
\nC     \ar[r]              &       \nD.
}
\end{equation}

\textit{1. The commutativity of diagram (\ref{diag:delta_functor}):}

This is induced from the diagram on the Cech cocycle level,

\newcommand{\AZ}{Z^0(\Sym^s(\mc{S^*\otimes S})) \times Z^0(\Sym^t(\mc{S^*\otimes S}))}
\newcommand{\nAZ}{Z^s(\wedge^s \mc{E}^*) \times  Z^t(\wedge^t \mc{E}^*)}
\newcommand{\CZ}{Z^0(\Sym^s(\mc{S^*\otimes S})\otimes\Sym^t(\mc{S^*\otimes S}))}
\newcommand{\nCZ}{Z^{s+t}(\wedge^s\mc{E}^*\otimes\wedge^t\mc{E}^*)}

\begin{equation}
\xymatrix{
\AZ     \ar@{->}[r] \ar[d]      &       \CZ \ar[d]
\\
\nAZ        \ar[r]              &       \nCZ
,}
\end{equation}
whose commutativity can be directly verified via the following two squares:

\newcommand{\nnAZ}{Z^s(\wedge^s \mc{E}^*) \times  Z^0(\Sym^t(\mc{S^*\otimes S}))}
\newcommand{\nnCZ}{Z^s(\wedge^s\mc{E}^*\otimes\Sym^t(\mc{S^*\otimes S}))}

\begin{equation}
\xymatrix{
\AZ     \ar@{->}[r] \ar[d]^{\delta^s}       &       \CZ \ar[d]^{\delta^s}
\\
\nnAZ       \ar[r] \ar[d]^{\delta^t}                &       \nnCZ   \ar[d]^{\delta^t}
\\
\nAZ        \ar[r]              &       \nCZ
.}
\end{equation}

\textit{2. The commutativity of diagram (\ref{diag:tensor_to_exterior}):}

First observe that $\delta^{s+t}$ is induced by the Koszul sequence (\ref{seq:Koszul}) with $r=s+t$. To see this, we break this long exact sequence into short exact sequences
\begin{equation}
\begin{array}{l}
0 \to S_r \to Z_r \to S_{r-1} \to 0, \\
0 \to S_{r-1} \to Z_{r-1} \to S_{r-2} \to 0,\\
...,\\
0 \to S_1 \to Z_1 \to S_0 \to 0,
\end{array}
\end{equation}
where $Z_j = \wedge^j (\mc{V^*\otimes S})\otimes \Sym^{r-j} (\mc{S^*\otimes S})$, $S_j = {\rm Ker\ } (Z_j \to Z_{j-1})$, and $S_0 =\Sym^r(\mc{S^*\otimes S})$. They induce connecting maps on cohomology $\delta: H^{j}(S_j) \to H^{j+1}(S_{j+1})$, $j=0,...,r-1$ and $\delta^{s+t}$ is the composition of them.

The maps
$\delta^{s+t}_\otimes$ is induced by a similar long exact sequence. To describe it, we need to rephrase the Koszul resolution in the language of the Schur complexes. Consider the complex
\begin{equation}
\mathbb{E}: \mc{S\otimes S^*} \xrightarrow{f} \mc{V\otimes S^*} ,
\end{equation}
which defines $\mc{E}$ as Coker $f$. The Schur complex $L_r\mathbb{E}$ is defined as
\begin{equation}
L_r \mathbb{E}: \Sym^r (\mc{S\otimes S^*}) \to \Sym^{r-1} (\mc{S\otimes S^*}) \otimes (\mc{V\otimes S^*}) \to ... \to \wedge^r (\mc{V\otimes S^*}).
\end{equation}

More general Schur complexes $L_\lambda\mathbb{E}$ are indexed by Young
diagrams $\lambda$. See \cite{weyman2003cohomology}, Section 2.4.
The tensor of two Schur complexes satisfies the Littlewood-Richardson rule
(see \cite{weyman2003cohomology} Remark (2.4.8 - b), also note that the
Schur functors commute with the differentials of the Schur complex
\cite{weyman2003cohomology} 2.4.10): $L_\lambda\mathbb{E} \otimes L_\mu\mathbb{E} = \oplus c^\gamma_{\lambda\mu} L_\gamma \mathbb{E}$.
In particular, we have
$$L_s\mathbb{E}\otimes L_t \mathbb{E} = L_{s+t}\mathbb{E} \oplus
\mbox{(other terms)}.$$
This induces a map of complexes
\begin{equation}
 u: L_{s+t}\mathbb{E} \hookrightarrow L_s\mathbb{E}\otimes L_t \mathbb{E}.
 \end{equation}

Notice that $L_s\mathbb{E}\otimes L_t \mathbb{E}$ gives rise to a long exact
sequence
\begin{equation}
0 \to L_s\mathbb{E}\otimes L_t \mathbb{E} \to \wedge^{s}\mc{E}\otimes \wedge^t\mc{E} \to 0.
\end{equation}
Dualizing it, we get the long exact sequence
\begin{equation}\label{seq:tensor}
0 \to \wedge^{s}\mc{E}^*\otimes \wedge^t\mc{E}^* \to (L_s\mathbb{E}\otimes L_t \mathbb{E})^\vee \to 0,
\end{equation}
i.e.
\begin{equation}\begin{array}{ll}
0 &\to \wedge^{s}\mc{E}^*\otimes \wedge^t\mc{E}^* \to \wedge^{s}(\mc{V^*\otimes S})\otimes \wedge^t(\mc{V^*\otimes S})\to...\\
&\to \Sym^s(\mc{S^*\otimes S})\otimes \Sym^t(\mc{S^*\otimes S})\to 0.
\end{array}
\end{equation}
This sequence induces the map $\delta^{r+t}_\otimes$ on cohomology.

Similarly, we have
\begin{equation}\label{seq:koszul_s+t}
0 \to \wedge^{s+t}\mc{E}^* \to L_{s+t}\mathbb{E}^\vee \to 0,
\end{equation}
which is exactly the Koszul resolution (\ref{seq:Koszul}) with $r=s+t$.

Then it is easy to see that the dual of the map $u$ extends to a map of complexes (\ref{seq:tensor}) to (\ref{seq:koszul_s+t}). Since the connecting morphisms $\delta^{s+r}_\otimes$ and $\delta^{s+t}$ are functorial, this proves the commutativity of (\ref{diag:tensor_to_exterior}).

Combining the two diagrams, we get the desired commutative diagram (\ref{diag:quotient_ring_structure}).
}

\end{proof}

\begin{rem}\label{rmk:mult}
This theorem enables us to compute the multiplicative structure using that of
$H^0(\Sym^*(\mc{S^*\otimes S}))$.  By virtue of the fact that the
Schur functors obey
\begin{displaymath}
K_{\lambda} \mc{S} \otimes K_{\mu} \mc{S} =
\sum c^{\nu}_{\lambda \mu} K_{\nu} \mc{S},
\end{displaymath}
where $c^\nu_{\lambda\mu}$ is the Littlewood-Richardson coefficient,
we know that for
$$\kappa_\lambda, \kappa_\mu \in  H^0(\Sym^*(\mc{S^*\otimes S})),$$
we have
$$\kappa_\lambda\cdot\kappa_\mu = \sum c^\nu_{\lambda\mu}\kappa_\nu.$$
\end{rem}

\begin{rem}
In particular, the polymology ring is isomorphic to the ring of symmetric
polynomials in $k$ indeterminates.  The section
$$\kappa_{\lambda} \: \in \: H^0( {\rm Sym}^r(\mc{S^* \otimes S}))$$
corresponds to a Schur polynomial in those indeterminates associated to
the Young diagram $\lambda$.
\end{rem}

\section{The cohomology of $\wedge^r\mc{E}^*$}\label{sec:cohomology_computation}

We then want to describe $H^r(\wedge^r\mc{E}^*)$ via $\Delta = \Delta_B : H^0(\Sym^r(\mc{S^*\otimes S})) \to H^r(\wedge^r\mc{E}^*)$. Denote the kernel of $\Delta: H^0(\Sym^r(\mc{S}^*\otimes \mc{S})) \to H^r(\wedge^r\mc{E}^*)$ as $\mathbb{K}_r$.

\subsection{$B$-dependence}
\begin{thm}\label{similarity}
The kernel $\mathbb{K}_r$ of $\Delta: H^0(\Sym^r(\mc{S}^*\otimes \mc{S})) \to H^r(\wedge^r\mc{E}^*)$ only depends on the equivalence class of $B$ modulo similarity transformations $B\mapsto g B g^{-1},  g\in GL(V)$.
\end{thm}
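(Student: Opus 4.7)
The plan is to exploit the natural $GL(V)$-equivariance of the whole construction. A linear automorphism $g \in GL(V)$ induces an automorphism $\phi_g : X \to X$ of the Grassmannian that lifts equivariantly to $\mc{V}$, $\mc{V}^*$, $\mc{S}$ and all of their tensor and Schur-functor images. The key observation is that the source $H^0(\Sym^r(\mc{S}^*\otimes\mc{S}))$ of $\Delta_B$ is a trivial $GL(V)$-module (by the footnote following \eqref{sym_r2}), while the action of $g$ will identify $\mc{E}^*_B$ with $\mc{E}^*_{B'}$ for a similarity-transformed $B'$. Combining these two facts will force $\ker\Delta_B = \ker\Delta_{B'}$.

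The central technical step is a direct coordinate verification, starting from \eqref{map:f_B}, that the pullback $\phi_g^* f_B$ equals $f_{g^{-1}Bg}$ after identifying $\phi_g^*(\mc{V}^*\otimes\mc{S}) \cong \mc{V}^*\otimes\mc{S}$ and $\phi_g^*(\mc{S}^*\otimes\mc{S}) \cong \mc{S}^*\otimes\mc{S}$; under $v^j_d \mapsto g^j_l v^l_d$ and $c^d_i \mapsto (g^{-1})^k_i c^d_k$, the matrix $B$ is replaced by $g^{-1}Bg$ in the second term of \eqref{map:f_B}. This is morally the same computation already used in the reduction to Jordan canonical form inside Theorem \ref{thmcvvb}. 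It produces a commutative ladder of short exact sequences
\begin{equation*}
\xymatrix{
0 \ar[r] & \phi_g^*\mc{E}^*_B \ar[r] \ar[d]^{\cong} & \phi_g^*(\mc{V}^*\otimes\mc{S}) \ar[r]^{\phi_g^* f_B} \ar[d]^{\cong} & \phi_g^*(\mc{S}^*\otimes\mc{S}) \ar[r] \ar[d]^{\cong} & 0 \\
0 \ar[r] & \mc{E}^*_{g^{-1}Bg} \ar[r] & \mc{V}^*\otimes\mc{S} \ar[r]^{f_{g^{-1}Bg}} & \mc{S}^*\otimes\mc{S} \ar[r] & 0
}
\end{equation*}
Taking $\wedge^r$ and using that $\phi_g$ is an isomorphism of $X$ yields an isomorphism $\Phi_g : H^r(\wedge^r \mc{E}^*_B) \xrightarrow{\sim} H^r(\wedge^r \mc{E}^*_{g^{-1}Bg})$ on cohomology.

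To conclude, the Koszul resolution \eqref{seq:Koszul} is functorial in the short exact sequence defining $\mc{E}^*$, so each connecting homomorphism composing $\Delta_B$, and therefore $\Delta_B$ itself, is natural with respect to the $GL(V)$-action. This gives a commutative square
\begin{equation*}
\xymatrix{
H^0(\Sym^r(\mc{S}^*\otimes\mc{S})) \ar[r]^{\Delta_B} \ar[d]^{\Psi_g} & H^r(\wedge^r \mc{E}^*_B) \ar[d]^{\Phi_g} \\
H^0(\Sym^r(\mc{S}^*\otimes\mc{S})) \ar[r]^{\Delta_{g^{-1}Bg}} & H^r(\wedge^r \mc{E}^*_{g^{-1}Bg})
}
\end{equation*}
in which the right vertical arrow is an isomorphism and the left vertical arrow $\Psi_g$ is the induced $GL(V)$-action on the domain; this action is the identity because $H^0(\Sym^r(\mc{S}^*\otimes\mc{S})) = \bigoplus_\lambda \mathbb{C}\cdot \kappa_\lambda$ decomposes as a direct sum of trivial $GL(V)$-representations. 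Hence $\mathbb{K}_r = \ker\Delta_B$ and $\ker\Delta_{g^{-1}Bg}$ coincide as subspaces of the common domain. Replacing $g$ with $g^{-1}$ recovers the statement for similarity transformations of the form $B \mapsto gBg^{-1}$. I do not expect a serious obstacle beyond the coordinate bookkeeping of the first step.
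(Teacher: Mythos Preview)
Your proposal is correct and is precisely the argument the paper has in mind: the paper omits the proof as ``clear from the construction,'' and later (see \eqref{similarityCD}) records exactly the commutative square $f_B \leadsto f_{gBg^{-1}}$ you derived, together with the fact from \eqref{sym_r2} that the source is a trivial $GL(V)$-module. You have simply written out the details the authors left implicit.
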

The proof is clear from the construction and so is omitted.

Now we consider the image of $\sigma\in H^0(\Sym^2(\mc{S}^*\otimes\mc{S}))$ under $\Delta_B$.
To track the $B_{ij}$-dependence, we make the following definition.
\begin{defn}
Let each $B_{ij}$ be a degree one variable and denote the
\textit{total B degree} of each cocycle $\omega$ as $\deg \omega$.
\end{defn}
For any $n\times n$ matrix $B$, consider the characteristic polynomial
(with the sign changed) $\det(\lambda I + B)$.  Denote the coefficient of
$\lambda^{n-i}$ as $I_i(B)=I_i$, so that $I_0=1, I_1 = tr (B),
I_2 =\frac{1}{2}(tr(B)^2 - tr(B^2))),$ and so forth. We have $\deg I_i = i$.

\begin{thm}\label{B_ij-degree}
Every $\sigma\in \mathbb{K}_r$
is determined by some $\gamma \in {\rm Ker}(H^{j-1}(Z_j^{(r)}) \to H^{j-1}(Z_{j-1}^{(r)}))$, where $Z_{j}^{(r)} = \wedge^j (\mc{V}^*\otimes\mc{S}) \otimes \Sym^{r-j}(\mc{S}^*\otimes\mc{S})$.\footnote{For any $\sigma$, there always exists such $\gamma$.}
If $\gamma$ is $B$-independent, then $\sigma$ can be represented by a cocycle $\gamma_0$ such that $\deg \gamma_0 \leq r$.
\end{thm}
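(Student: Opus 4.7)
The plan is to view the Koszul resolution \eqref{seq:Koszul} together with the Cech complex as a double complex, and reconstruct $\sigma$ from $\gamma$ via a zigzag argument. Break \eqref{seq:Koszul} into short exact sequences
\begin{equation*}
0 \to S_j \to Z_j^{(r)} \to S_{j-1} \to 0,\quad j = 1,\dots,r,
\end{equation*}
with $S_r = \wedge^r\mc{E}^*$ and $S_0 = \Sym^r(\mc{S^*\otimes S})$; let $\delta_j : H^{j-1}(S_{j-1}) \to H^j(S_j)$ be the associated connecting maps, so that $\Delta = \delta_r\circ\cdots\circ\delta_1$.

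For the existence of $\gamma$, set $\sigma_j := \delta_j\cdots\delta_1(\sigma) \in H^j(S_j)$, so that $\sigma_r = \Delta(\sigma) = 0$. The long exact sequence of the level-$r$ SES produces a lift $\gamma \in H^{r-1}(Z_r^{(r)})$ whose image in $H^{r-1}(S_{r-1})$ is $\sigma_{r-1}$. Factoring $Z_r^{(r)}\to Z_{r-1}^{(r)}$ as $Z_r^{(r)}\twoheadrightarrow S_{r-1}\hookrightarrow Z_{r-1}^{(r)}$, the image of $\gamma$ in $H^{r-1}(Z_{r-1}^{(r)})$ passes through $\sigma_{r-1}$; and $\sigma_{r-1} = \delta_{r-1}(\sigma_{r-2})$ lies in the kernel of $H^{r-1}(S_{r-1}) \to H^{r-1}(Z_{r-1}^{(r)})$ by exactness of the level-$(r-1)$ LES. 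Hence $\gamma$ lies in the stated kernel with $j = r$, and reversing the zigzag recovers $\sigma$ from $\gamma$.

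For the $B$-degree bound, note that the bundles $Z_j^{(r)}$ are themselves independent of $B$, so a $B$-independent class $\gamma$ admits a Cech cocycle representative $\gamma_r \in C^{r-1}(Z_r^{(r)})$ of $B$-degree zero. Decomposing $f_B = f_0 + f_1(B)$ from \eqref{map:f_B} splits the Koszul differential as $d = d_0 + d_1$ with $\deg d_0 = 0$ and $\deg d_1 = 1$. Reverse the zigzag: set $\omega_r := \gamma_r$, and recursively choose $\omega_{j-1} \in C^{j-2}(Z_{j-1}^{(r)})$ with $\delta\omega_{j-1} = d\omega_j$. Each $d\omega_j$ is a Cech cocycle (from $\delta d = \pm d\delta$ and $d^2 = 0$), and is a Cech coboundary either by the kernel hypothesis on $\gamma$ (first step) or by the vanishing of intermediate cohomologies established in Section \ref{sec:VBcohomologies} (subsequent steps). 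Finally $\gamma_0 := d\omega_1 \in C^0(Z_0^{(r)})$ is a global section representing $\sigma$.

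The $B$-degree bound is then a bookkeeping induction: since $\delta$ is $B$-independent, every Cech coboundary of $B$-degree $\leq k$ admits a primitive of $B$-degree $\leq k$ (truncate modulo higher $B$-degrees). Hence $\deg\omega_j \leq r - j$ implies $\deg d\omega_j \leq r - j + 1$, so $\omega_{j-1}$ may be chosen of $B$-degree $\leq r - j + 1$, and $\deg\omega_r = 0$ propagates to $\deg\gamma_0 \leq r$. The principal subtlety is ensuring the continuability of the zigzag at each intermediate stage; this relies on the vanishing of $H^{\bullet}$ for the relevant summands $K_{\lambda/\nu}\mc{S}^*\otimes K_\lambda\mc{S}$ appearing in the decomposition of $Z_j^{(r)}$, coming from Theorem \ref{thm:direct_sum} and Theorem \ref{vanishing_r_leq_n-k}.
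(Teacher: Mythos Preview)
Your proposal follows the same zigzag-in-the-double-complex strategy as the paper: break the Koszul resolution into short exact sequences, lift $\sigma$ through the connecting maps to obtain a closed preimage $\gamma$ (the paper packages this via Lemma~\ref{lemma:tech-app}), and then control the $B$-degree on the way back down by exploiting that the \v{C}ech differential is $B$-independent while the Koszul differential raises $B$-degree by at most one. Your truncation argument for primitives is exactly the paper's ``$\deg d^{-1}=0$'' observation, spelled out more carefully.

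One minor point: for the continuability of the truncated zigzag you invoke Theorem~\ref{thm:direct_sum}, but that decomposition is for the sequence $E^r$ (with terms $\wedge^t(\mc{Q}^*\otimes\mc{S})\otimes\Sym^{r-t}(\mc{V}^*\otimes\mc{S})$), not for the Koszul resolution~\eqref{seq:Koszul} whose terms are your $Z_j^{(r)}$; the relevant decomposition is the one in Remark~\ref{rem1} (sequence~\eqref{the_true_LEG}), which the paper itself cites. The needed vanishing of $H^{j-1}(Z_{j-1}^{(r)})$ at intermediate stages is not established in full generality by Theorem~\ref{vanishing_r_leq_n-k} alone (that requires $\lambda_1\le n-k$), but it does hold in the case $r=n-k+1$ where this theorem is actually applied, as shown in the proof of Theorem~\ref{glV_invariance_r=n-k+1}. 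The paper's own proof is equally terse on this point.
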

\begin{proof}

The first half of the theorem is easily verified by applying Lemma \ref{lemma:tech-app} to the
sequence \eqref{the_true_LEG}.

Under the assumption we have $\deg \lambda =0$ as it is $B$-independent, and $\deg q_i = 1$ by linearity.
The key observation here is $\deg d^{-1} =0$, i.e. $\deg \gamma^0 = \deg \gamma^1$. Assume this is not true,
then  $\deg\gamma^0 > \deg \gamma^1$. Take the sum of the terms of $\gamma^0$ whose degrees are larger than $\deg
 \gamma^1$, call it $\gamma^0_+$, then $d(\gamma^0_+) $ has to
 be 0 $\in C^1((\mc{V}^*\otimes\mc{S})\otimes (\mc{S}^*\otimes\mc{S}))$ since there is no term of the corresponding degree there.
 Hence one can simply drop $\gamma^0_+$ when choosing $\gamma^0$.
\end{proof}

\begin{cor}
When $\gamma$ is $B$-independent, the kernel $\mathbb{K}_r$
is generated by elements with
coefficients that are polynomials of $I_1,...,I_r$ whose total $B$ degrees are less than or equal to $r$.
\end{cor}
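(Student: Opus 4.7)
The plan is to combine the degree bound supplied by Theorem \ref{B_ij-degree} with the similarity invariance of Theorem \ref{similarity}, and then invoke classical invariant theory for the conjugation action on matrices.

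First, I would unpack the statement via the decomposition
$H^0(\Sym^r(\mc{S}^*\otimes\mc{S})) \cong \bigoplus_{\lambda \in \mc{P}(k,r)} \mathbb{C}\cdot\kappa_\lambda$
from \eqref{sym_r2}, so that any element of the kernel can be written as $\sigma = \sum_\lambda c_\lambda(B)\,\kappa_\lambda$ for some scalars $c_\lambda(B)$, where we think of $B$ as varying over its parameter space. Since the $\kappa_\lambda$ are $GL(V)$-invariant (the footnote after \eqref{sym_r2} shows that $H^0(\Sym^r(\mc{S}^*\otimes\mc{S}))$ is a trivial $GL(V)$-module), by Theorem \ref{B_ij-degree} we may in the $B$-independent-$\gamma$ case take a representative $\gamma_0$ with $\deg \gamma_0 \leq r$, and hence assume each $c_\lambda(B)$ is a polynomial in the entries $B_{ij}$ of total $B$-degree at most $r$.

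Second, I would use Theorem \ref{similarity}, which tells us that $\mathbb{K}_r$ is invariant under $B \mapsto gBg^{-1}$ for $g \in GL(V)$. Since the basis vectors $\kappa_\lambda$ do not depend on $B$ and are themselves $GL(V)$-invariant, the transformation $B \mapsto gBg^{-1}$ acts on the expression $\sum_\lambda c_\lambda(B)\,\kappa_\lambda$ only through the scalars, turning it into $\sum_\lambda c_\lambda(gBg^{-1})\,\kappa_\lambda$. Invariance of the kernel forces each coefficient polynomial $c_\lambda$ to itself be $GL(V)$-invariant under conjugation on $\mathfrak{gl}_n$.

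Finally, I would appeal to the classical first fundamental theorem of invariant theory for the conjugation action of $GL_n$ on $\mathfrak{gl}_n$: the ring of such invariant polynomials is $\mathbb{C}[I_1(B),\ldots,I_n(B)]$, with $\deg I_i = i$. Combined with the bound $\deg c_\lambda \leq r$ from the first step, only $I_1,\ldots,I_r$ can appear in the expressions for the $c_\lambda$, yielding the desired form of the generators.

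The main obstacle I anticipate is carefully articulating the ``coefficient'' picture so that Theorems \ref{B_ij-degree} and \ref{similarity} apply simultaneously: Theorem \ref{B_ij-degree} produces a representative of a fixed element at a fixed $B$, whereas the invariance argument needs a family of representatives that transform compatibly as $B$ varies. Making this universal/family-of-kernels viewpoint precise (for instance, by working with the kernel of $\Delta$ as a $\mathbb{C}[B_{ij}]$-module and exhibiting generators whose coefficients are the polynomials $c_\lambda(B)$) is the step that requires the most care; once it is in place, the invariant-theoretic conclusion is immediate.
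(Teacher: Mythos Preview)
Your proposal is correct and matches the paper's intended reasoning: the paper states this Corollary without proof, treating it as immediate from Theorem~\ref{B_ij-degree} (degree bound) and Theorem~\ref{similarity} (similarity invariance), together with the classical fact that the conjugation-invariant polynomials on $\mathfrak{gl}_n$ are generated by $I_1,\ldots,I_n$. You have correctly identified both the ingredients and the one genuinely delicate point---ensuring that the degree bound from Theorem~\ref{B_ij-degree} can be applied uniformly in $B$ so that the resulting coefficients $c_\lambda(B)$ are honest polynomials to which the invariant-theory argument applies---which the paper glosses over.
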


\subsection{Generalities of the $r=n-k+1$ case}

By Remark \ref{rem1} and Theorem \ref{vanishing_r_leq_n-k}, the first case we expect a nontrivial kernel of $\Delta: H^0(S_0)\to H^r(S_r)$ is the case when $r=n-k+1$. Here we used the notion $S_i$ as in the short exact sequences \begin{equation}
0\to S_j \to Z_j \to S_{j-1}\to 0,
\end{equation}
$j=1,..., r$, which are generated from the long exact sequence (\ref{seq:Koszul}). In particular, $S_0 =\Sym^r(\mc{S}^*\otimes\mc{S})$, $S_r=\wedge^r\mc{E}^*$,
$Z_j = \wedge^j(\mc{V}^*\otimes \mc{S})
\otimes \Sym^{r-j}(\mc{S^*\otimes S})$.

We find that
\begin{thm}\label{glV_invariance_r=n-k+1}
When $r=n-k+1$, $\mathbb{K}_r$, the kernel of $\Delta: H^0(S_0)\to H^r(S_r)$ is generated by the image of a $GL(V)$-invariant element in $H^{r-1}(\wedge^{r}(\mc{V^*\otimes S}))$,
 for any $B-$deformed $\mc{E}^*$.
\end{thm}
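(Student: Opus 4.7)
The plan is to identify a specific $GL(V)$-invariant class in $H^{r-1}(Z_r)$ and trace it back through the connecting maps of the Koszul resolution to $H^0(S_0)$.

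Using the decomposition of Remark \ref{rem1},
\begin{equation*}
Z_j \;=\; \bigoplus_{\substack{|\lambda|=r\\ |\nu|=j}} K_{\lambda/\nu}\mc{S}^*\otimes K_{\nu'}\mc{V}^*\otimes K_\lambda\mc{S},
\end{equation*}
Corollary \ref{cor:K_lambda_S} immediately kills every summand whose $\lambda\in\mc{P}(k,r)$ satisfies $\lambda_1\leq n-k$, i.e.\ every $\lambda\neq(r)$ when $r=n-k+1$. Only the summands with $\lambda=(r)$ survive, and then $\nu$ must equal $(j)$, so the cohomology of $Z_j$ reduces to that of $\wedge^j V^*\otimes\Sym^{r-j}\mc{S}^*\otimes\Sym^r\mc{S}$. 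A direct Borel--Weil--Bott analysis, after further expanding $\Sym^{r-j}\mc{S}^*\otimes\Sym^r\mc{S}$ into $GL(\mc{S})$-irreducibles $K_{(r-i,\,0^{k-2},\,-(r-j-i))}\mc{S}$ for $i=0,\ldots,r-j$, shows that only the piece with $i=0$ produces nonvanishing cohomology, and always in degree $r-1$. Consequently $H^{j-1}(Z_j)=H^j(Z_j)=0$ for $1\leq j\leq r-2$, while
\begin{equation*}
H^{r-1}(Z_r) \;\cong\; \wedge^r V^*\otimes \wedge^r V \;\cong\; \operatorname{End}(\wedge^r V),
\end{equation*}
whose trace line is the unique one-dimensional $GL(V)$-invariant subspace; denote its generator by $\eta$.

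These vanishings force the connecting maps $\delta_j$ of the short exact sequences $0\to S_j\to Z_j\to S_{j-1}\to 0$ to be isomorphisms for $1\leq j\leq r-2$ and $\delta_{r-1}$ to be injective. Since $H^{r-1}(\wedge^r\Omega)=H^{r-1,r}(X)=0$, upper semicontinuity gives $H^{r-1}(\wedge^r\mc{E}^*)=0$ on a Zariski open neighbourhood of the undeformed point, so $H^{r-1}(Z_r)$ injects into $H^{r-1}(S_{r-1})$; the image $\bar\eta$ of $\eta$ is then $GL(V)$-invariant and lies in $\operatorname{Ker}(\delta_r)$.

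The main obstacle is to verify that $\bar\eta$ can be lifted through $\delta_{r-1}$ so as to reach $H^0(S_0)$. I would address this by observing that the cokernel of $\delta_{r-1}$ embeds into $H^{r-1}(Z_{r-1})=\wedge^{r-1}V^*\otimes K_{(1,\,0^{k-2},\,-1^r)}V^*$, and a direct highest-weight comparison shows that the two tensor factors are not mutually dual as $GL(V)$-representations, so no $GL(V)$-invariant vector exists in this space. Every $GL(V)$-invariant of $H^{r-1}(S_{r-1})$ is therefore automatically in $\operatorname{Im}(\delta_{r-1})$, and successively lifting $\bar\eta$ through $\delta_{r-1}$ and the isomorphisms $\delta_{r-2},\ldots,\delta_1$ yields a $GL(V)$-invariant class $\tilde\eta\in H^0(S_0)\cap\mathbb{K}_r$. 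Combining this construction with the fact that $\Delta$ is surjective at $B=0$ with one-dimensional kernel (generated by $\kappa_{(r)}$), upper semicontinuity pins $\dim\mathbb{K}_r=1$ on the generic locus, so $\tilde\eta$ generates $\mathbb{K}_r$ there, and the $B$-independence of all cohomology inputs to the argument propagates the same conclusion to every $B$-deformation.
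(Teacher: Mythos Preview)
Your computation of $H^\bullet(Z_j)$ is essentially right (though the correct reference for killing the $\lambda_1\leq n-k$ summands is Theorem~\ref{vanishing_r_leq_n-k}, not Corollary~\ref{cor:K_lambda_S}), and for $B=0$ your argument goes through. The gap is in the passage to general $B$. Two separate things go wrong. First, your semicontinuity steps (``$H^{r-1}(\wedge^r\mc{E}^*)=0$ on a Zariski open neighbourhood'', ``$\dim\mathbb{K}_r=1$ on the generic locus'') at best cover a dense open set of $B$, whereas the theorem is asserted \emph{for every} $B$-deformation; the closing remark that ``the $B$-independence of all cohomology inputs propagates the same conclusion to every $B$'' is not an argument, because the bundles $S_j$ for $1\leq j\leq r-1$ and the connecting maps $\delta_j$ genuinely depend on $B$. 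Second, and more seriously, your lifting of $\bar\eta$ through $\delta_{r-1}$ relies on $GL(V)$-equivariance: you say $\bar\eta\in H^{r-1}(S_{r-1})$ is $GL(V)$-invariant and then use the absence of invariants in $H^{r-1}(Z_{r-1})$ to force it into $\operatorname{Im}\delta_{r-1}$. But for $B\neq 0$ the bundle $\mc{E}^*$, hence every $S_j$, is \emph{not} a $GL(V)$-equivariant bundle (only the stabilizer of $B$ under conjugation acts), so $H^{r-1}(S_{r-1})$ is not a $GL(V)$-module and ``$\bar\eta$ is $GL(V)$-invariant'' has no meaning. The map $S_{r-1}\hookrightarrow Z_{r-1}$ is likewise not $GL(V)$-equivariant, so even if some ad hoc meaning were attached, the conclusion would not follow.

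The paper closes this gap by a different device: it builds a morphism of the entire Koszul resolution for $f_B$ to that for $f_0$ (replacing $\mc{S}^*\otimes\mc{S}$ by $End_0\,\mc{S}$), and uses the splitting $\mc{S}^*\otimes\mc{S}\cong End_0\,\mc{S}\oplus\mc{O}$ together with Theorem~\ref{vanishing_r_leq_n-k} to show that the vertical maps $Z_j\to Z_{0,j}$ induce isomorphisms on cohomology for every $j\geq 1$. Commutativity of the resulting squares then gives $\ker\bar f_B=\ker\bar f_0$ literally as subspaces of the same $H^{r-1}(Z_r)$, so the identification of the kernel with the $GL(V)$-invariant line---which is a purely $B=0$ statement---transfers to all $B$ at once. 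Your Borel--Weil--Bott analysis is exactly what is needed once this reduction is in place; what is missing from your write-up is the reduction itself.
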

\begin{proof}
Consider the morphism of complexes
\begin{equation}\label{map_of_ses}
\xymatrix{
0\ar[r]&\mc{E}^*\ar[r]\ar[d]^{g}&\mc{V}^*\otimes \mc{S} \ar[r]^{f_B}\ar@{=}[d]^{id} &\mc{S}^*\otimes \mc{S}\ar[d]^{g_0}\ar[r]&0\\
0\ar[r]&\mc{E}^*_0\ar[r]&\mc{V}^*\otimes \mc{S} \ar[r]^{f_{0}\ \ \ } &End_0\ \mc{S}
\ar[r]&0
.}
\end{equation}
Take the induced long exact sequences, we have
\begin{equation}\label{wedgeE2wedgeE0}
\xymatrix{
0\ar[r]&\wedge^r\mc{E}^*\ar[r]\ar[d]^{g}&...\ar[r]&Z_j\ar[d]\ar[r]&...\ar[r] &\Sym^r(\mc{S}^*\otimes \mc{S})\ar[d]\ar[r]&0\\
0\ar[r]&\wedge^r\mc{E}^*_0\ar[r]&...\ar[r]&Z_{0,j}\ar[r]&...\ar[r] &\Sym^r (End_0\ \mc{S})
\ar[r]&0
,}
\end{equation}
where $Z_{0,j} =  \wedge^j(\mc{V}^*\otimes \mc{S})
\otimes \Sym^{r-j}(End_0\ \mc{S})$.

We claim that the vertical arrows induces isomorphisms on cohomologies, for $j=1,...,r$. First, for $j=r$ this is identity. Then, for $j=1,...,r-1$, note that
$\mc{S^*\otimes S} \cong End_0 \mc{S}\oplus \mc{O}$. Hence
\begin{equation}\label{iso_r}
\begin{array}{ll}
\Sym^{r-j}(\mc{S^*\otimes S}) & \cong \Sym^{r-j}(End_0 \mc{S}) \oplus ...\oplus \Sym^2(End_0\mc{S}) \oplus End_0 \mc{S} \oplus \mc{O},\\
&\cong \Sym^{r-j}(End_0 \mc{S}) \oplus \Sym^{r-j-1}(\mc{S^*\otimes S}).
\end{array}
\end{equation}
Since $H^\bullet( \wedge^j(\mc{V}^*\otimes \mc{S})\otimes\Sym^{r-j-1}(\mc{S^*\otimes S}))=0$ by Theorem \ref{vanishing_r_leq_n-k}
\footnote{Or rather its variant, that with the assumption of the theorem, for each $\mu$ such that $0\subseteq\mu\subsetneq \lambda$, we have
$H^\bullet(K_{\mu}\mc{S}^*\otimes K_\lambda \mc{S}) =0$. This is stated in the proof of the theorem.}
, the claim is proved by tensoring (\ref{iso_r}) with $ \wedge^j(\mc{V}^*\otimes \mc{S}) $.

This implies that the kernels of $H^{j-1}(Z_{j}) \to H^{j-1}(Z_{j-1})$ for $\wedge^r\mc{E}^*$ are all isomorphic to the corresponding ones for $\Omega^r$, via the squares
\begin{equation}\label{n-k+1_Z_r}
 \xymatrix{
H^{j-1}(Z_{j})\ar[r]\ar[d]^{\cong} & H^{j-1}(Z_{j-1})\ar[d]^{\cong}\\
H^{j-1}(Z_{0,j})\ar[r] & H^{j-1}(Z_{0,j-1})
 .}
 \end{equation}

We then proceed to check the $\Omega^r$ case.

For the cotangent bundle, (\ref{seq:Koszul}) is a term-by-term direct sum of long exact sequences tensoring with $K_\lambda \mc{S}$, as shown in Remark \ref{rem1}. Only the $\lambda$'s with $\lambda_1 > n-k$ will contribute to
${\rm Ker} \, \Delta$, by Theorem \ref{vanishing_r_leq_n-k}. When $r=n-k+1$, this means we only need to consider the case $\lambda =(r)$, i.e. the long exact sequence reduces to \begin{equation}\begin{array}{ll}
0 &\to 0 \to \wedge^r \mc{V}^* \otimes\Sym^r\mc{S}\to ... \to \wedge^{j} \mc{V}^*\otimes \Sym^{r-j}\mc{S}^*\otimes\Sym^r\mc{S}\\
& \to ... \to \Sym^r\mc{S}^*\otimes\Sym^r\mc{S}\to 0
\end{array}
\end{equation}
for the purpose of computing ${\rm Ker} \, \Delta$.

By Borel-Weil-Bott, $H^i(Z_j)=H^i(\mc{V}^*\otimes \Sym^{r-j}\mc{S}^*\otimes\Sym^r\mc{S}) =0$ for $i< n-k$, $j=1,...,n-k$\footnote{
$\Sym^{r-j}\mc{S}^*\otimes\Sym^r\mc{S} = \Sym^{r-j}\mc{S}^*\otimes\otimes K_{(r^{k-1})}\mc{S}^*\otimes (\wedge^k\mc{S})^r$ can be completely determined by Pieri's formula. We just need the fact that,
when $j=1,...,n-k$, for any component $K_\lambda \mc{S}^*$ of $\Sym^{r-j}\mc{S}^*\otimes\Sym^r\mc{S}$, we have $|\lambda|=\sum_{i=1}^k\lambda_i=-j$. So $\lambda_k <0$. So it takes at least $n-k$ steps to mutate
${(\lambda_1,...,\lambda_k,0^{n-k})}$ to a decreasing sequence.
}.

So the only contribution to ${\rm Ker} \, \Delta$ comes from the kernel of $$H^{r-1}(Z_r)\xrightarrow{\bar{f}_B} H^{r-1}(Z_{r-1}),$$ which is the $GL(V)$ invariant part of $H^{r-1}(Z_r) = \wedge^r V^*\otimes \wedge^r V$ (which is $K_0 V^* = \mathbb{C}$).
Note that the identity map in the middle column of \eqref{map_of_ses} induces an identity map  \begin{equation}
H^{r-1}(Z_r)\to H^{r-1}(Z_{0,r}).
\end{equation}
Hence we have
\begin{equation}
 \xymatrix{
{\rm Ker} f_B \ar[r]\ar[d]&H^{r-1}(Z_{r})\ar[r]^{\bar{f}_B}\ar@{=}[d]^{}& H^{r-1}(Z_{r-1})\ar[d]^{\cong}\\
{\rm Ker} f_0 \ar[r]&H^{r-1}(Z_{0,r})\ar[r]^{\bar{f}_0} & H^{r-1}(Z_{0,r-1})
 .}
 \end{equation}

So we have ${\rm Ker}\, \bar{f}_B = {\rm Ker} \, \bar{f}_0$, for any $B$.

\end{proof}

Let $V=V_1\oplus L$ be an $n$ dimensional vector space. Consider the inclusion of Grassmannians $X=G(k-1,V_1)\hookrightarrow Y = G(k,V)$, with $[S_1]\mapsto [S_1\oplus L]$. Note that in this case we have $\mc{V}|_X = \mc{V}_1\oplus \mc{L}$, $\mc{S}|_X=\mc{S}_1\oplus \mc{L}$, and similarly for their duals. We extend the $GL(V_1)$ action to $V$ by making $L$ a trivial $GL(V_1)$ module. This will be implicitly used when considering the $GL(V_1)$ invariant parts of cohomologies.

\begin{lem}\label{CD_gk-1n-1}
Let $B=\begin{pmatrix}
B_1 & 0\\ 0 & 0
\end{pmatrix}$.
Then there is a commutative diagram:
{
\newcommand{\A}{\mc{V^*\otimes S}|_X}
\newcommand{\B}{\mc{S^*\otimes S}|_X}
\newcommand{\nA}{\mc{V}_1^*\mc{\otimes S}_1}
\newcommand{\nB}{\mc{S}_1^*\otimes \mc{S}_1}
\begin{equation}
\xymatrix{
\A      \ar[r]^{f_B}    \ar[d]^{\pi}            &       \B  \ar[d]^{\pi}
\\
\nA     \ar[r]^{f_{B_1}}                &       \nB
,}
\end{equation}
}
given by the natural projections as vertical maps.
\end{lem}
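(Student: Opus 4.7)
The plan is to unwind both sides of the claimed square in local Stiefel coordinates and verify equality term by term; the block form of $B$ will do essentially all of the work.

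First I would fix the identifications. The inclusion $X\hookrightarrow Y$ induces canonical splittings
$\mc{V}|_X = \mc{V}_1\oplus \mc{L}$ and $\mc{S}|_X=\mc{S}_1\oplus \mc{L}$, and hence
\begin{equation*}
\mc{V}^*\otimes \mc{S}\,|_X \;\cong\; (\mc{V}_1^*\oplus \mc{L}^*)\otimes(\mc{S}_1\oplus \mc{L}),
\qquad
\mc{S}^*\otimes \mc{S}\,|_X \;\cong\; (\mc{S}_1^*\oplus \mc{L}^*)\otimes(\mc{S}_1\oplus \mc{L}).
\end{equation*}
The vertical maps $\pi$ are the evident projections onto the $\mc{V}_1^*\otimes \mc{S}_1$ and $\mc{S}_1^*\otimes \mc{S}_1$ summands, respectively.

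Next I would choose indices compatible with these splittings: let $a_1,b_1,d_1$ range over $\{1,\dots,k-1\}$ and let $a=k,b=k,d=k$ correspond to the $\mc{L}$ factor of $\mc{S}$; similarly let $i_1,j_1\in\{1,\dots,n-1\}$ and $i=n, j=n$ correspond to $\mc{L}\subset V$. On $X$, the Stiefel coordinates for $\mc{S}|_X=\mc{S}_1\oplus\mc{L}$ satisfy
\begin{equation*}
v^{i_1}_{b_1}\;=\;\text{Stiefel coords of }[S_1]\in G(k-1,V_1),\qquad v^{n}_{k}=1,\qquad v^{i_1}_{k}=0,\qquad v^{n}_{b_1}=0.
\end{equation*}
The hypothesis on $B$ is that $B^i_j=0$ whenever $i=n$ or $j=n$, and $B^{i_1}_{j_1}=(B_1)^{i_1}_{j_1}$ otherwise.

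Now starting from a section $c^a_i$ of $\mc{V}^*\otimes \mc{S}|_X$, I would compute $f_B(c^a_i)$ using \eqref{map:f_B} and apply $\pi$ by extracting the $(a_1,b_1)$ component. For the first term, $c^{a_1}_i v^i_{b_1}=c^{a_1}_{i_1}v^{i_1}_{b_1}$ because $v^n_{b_1}=0$. For the second term, only indices $i=i_1\neq n$ and $j=j_1\neq n$ survive in $B^i_j v^j_d$, and only $d=d_1\neq k$ survives in the sum over $d$ because $v^{j_1}_{k}=0$; this gives $c^{d_1}_{i_1}(B_1)^{i_1}_{j_1}v^{j_1}_{d_1}\,\delta^{a_1}_{b_1}$. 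The result matches $f_{B_1}$ applied to $\pi(c^a_i)=c^{a_1}_{i_1}$, which is exactly what the formula \eqref{map:f_B} for $B_1$ on $G(k-1,n-1)$ gives.

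There is no real obstacle; the only thing to be careful about is bookkeeping of which indices get killed by the vanishing of $v^{j_1}_k$, $v^n_{b_1}$ and by the block-zero structure of $B$. Both cancellations require exactly the hypothesis that $B$ has the prescribed block form, so the lemma follows.
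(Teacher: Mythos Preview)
Your proof is correct and follows essentially the same approach as the paper: both arguments write out $f_B$ explicitly in Stiefel coordinates adapted to the splitting $V=V_1\oplus L$, and then use the vanishing of $v^{n}_{b_1}$, $v^{j_1}_{k}$ together with the block form of $B$ to check that $\pi\circ f_B=f_{B_1}\circ\pi$ component by component. The only cosmetic difference is that the paper writes the difference $\pi\circ f_B-f_{B_1}\circ\pi$ and kills the surviving terms, whereas you compute $\pi\circ f_B$ directly and match it with $f_{B_1}\circ\pi$.
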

\begin{proof}
We take the standard basis for $V=V_1\oplus L$, with $V_1=\langle e_1,...,e_{n-1}\rangle$ and $L=\langle e_n\rangle$. On $X$, each $S$ is generated by $v_1,...,v_k$ with $v_k = (0,...,0,1)^T$, and $v^n_b = 0$ for $b\leq k-1$.

As before, we know the map $f_B$ and $f_{B_1}$ explicitly. \begin{equation}
f_B: c^a_i \mapsto c^a_i v^i_b +c^d_i B^i_j v^j_d \delta^a_b,
\end{equation}
and similarly for $f_{B_1}$ with $a,i$ indices runs to $k-1,n-1$ instead of $k,n$. Hence
\begin{equation}
\begin{array}{ll}
\pi \circ f_B - f_{B_1}\circ\pi
&=c^a_n v^n_b + c^k_i B^i_j v^j_k \delta^a_b + c^d_n B^n_j v^j_d \delta^a_b + c^d_i B^i_n v^n_d \delta^a_b
\\
&= c^k_i B^i_n v^n_k \delta^a_b \text{\ \ (since\ } v^j_k = \delta^j_n \text{)}
\\
&=0.
\end{array}
\end{equation}
\end{proof}
Together with the commutative diagram
{
\newcommand{\A}{\mc{V^*\otimes S}}
\newcommand{\B}{\mc{S^*\otimes S}}
\newcommand{\nA}{\mc{V^*\otimes S}|_X}
\newcommand{\nB}{\mc{S^*\otimes S}|_X}
\begin{equation*}
\xymatrix{
\A      \ar[r]  \ar[d]      &       \B  \ar[d]
\\
\nA     \ar[r]              &       \nB
}
\end{equation*}
}
from natural restrictions,
we get
{
\newcommand{\A}{\mc{V^*\otimes S}}
\newcommand{\B}{\mc{S^*\otimes S}}
\newcommand{\nA}{\mc{V^*\otimes S}|_X}
\newcommand{\nB}{\mc{S^*\otimes S}|_X}
\newcommand{\nC}{\mc{V}_1^*\mc{\otimes S}_1}
\newcommand{\nD}{\mc{S}_1^*\otimes \mc{S}_1}

\begin{equation}
\xymatrix{
\A      \ar[r]^{f_B}    \ar[d]      &       \B  \ar[d]^{q}
\\
\nA     \ar[r]^{f_B}    \ar[d]      &       \nB \ar[d]
\\
\nC     \ar[r]^{f_{B_1}}                &       \nD
.}
\end{equation}
}
Note that each horizontal line is surjective for suitable $B$ or $B_1$, with a vector bundle as its kernel. In particular, the second line is so because both the first line and $q$ are surjective. This can also be seen from restricting the first line to $X$ as vector bundles directly.
So this induces maps of Koszul complexes similar to (\ref{wedgeE2wedgeE0}), and further the following commutative diagram:
{
\newcommand{\A}{H^{r-1}(\wedge^r(\mc{V^*\otimes S}))_0}
\newcommand{\B}{H^0(\Sym^r(\mc{S^*\otimes S}))}
\newcommand{\nA}{H^{r-1}(\wedge^r(\mc{V^*\otimes S})|_X)_0}
\newcommand{\nB}{H^0(\Sym^r(\mc{S^*\otimes S})|_X)}
\newcommand{\nC}{H^{r-1}(\wedge^r(\mc{V}_1^*\mc{\otimes S}_1))_0}
\newcommand{\nD}{H^0(\Sym^r(\mc{S}_1^*\otimes \mc{S}_1))}

\begin{equation}\label{CDq0}
\xymatrix{
\A      \ar[r]  \ar[d]^{q_{r-1}}        &       \B  \ar[d]^{q_0}
\\
\nA         \ar[d]^{\pi_{r-1}}      &       \nB \ar[d]^{\pi_0}
\\
\nC     \ar[r]              &       \nD
,}
\end{equation}
where the $0$ in the first line indicates $GL(V)$ invariance and the $0$'s in the second and third line indicate $GL(V_1)$ invariance.

The first line and the third line are clear from Theorem \ref{glV_invariance_r=n-k+1}, with the induced map $\A \to \nC$. Observe that the map factors through a subspace of $H^{r-1}(\wedge^r(\mc{V^*\otimes S})|_X),$ which is the preimage of $\nC $. So it has to be $\nA$.
}

Recall that $\kappa_\lambda$ is the canonical generator of $H^0(K_\lambda \mc{S}^*\otimes K_\lambda \mc{S})$. We will use $\kappa_{\lambda,Y}$ to indicate the base manifold $Y$.
\begin{lem}
The map $\pi_0\circ q_0$ maps $\kappa_{\lambda,Y}$ to $\kappa_{\lambda,X}$.
\end{lem}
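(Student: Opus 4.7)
The plan is to identify $\kappa_\lambda$ with the canonical identity section of $K_\lambda \mc{S}^*\otimes K_\lambda \mc{S}\cong \mathrm{End}(K_\lambda \mc{S})$, and then trace this identity through restriction and projection. Since $H^0(K_\lambda \mc{S}^*\otimes K_\lambda \mc{S})\cong \mathbb{C}$, the generator $\kappa_{\lambda,Y}$ can be taken to be the global identity endomorphism $\mathrm{id}_{K_\lambda \mc{S}}$, viewed inside $\Sym^r(\mc{S}^*\otimes\mc{S})$ via the Cauchy summand inclusion. Upon restriction to $X$, where $\mc{S}|_X = \mc{S}_1 \oplus \mc{L}$ with $\dim L=1$, the branching rule (equivalently Pieri's formula) yields $K_\lambda \mc{S}|_X = \bigoplus_{\mu} K_\mu \mc{S}_1 \otimes \mc{L}^{|\lambda|-|\mu|}$, summed over $\mu\subseteq \lambda$ with $\lambda/\mu$ a horizontal strip.

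Consequently, $q_0(\kappa_{\lambda,Y}) = \mathrm{id}_{K_\lambda \mc{S}|_X} = \sum_\mu \mathrm{id}_{K_\mu \mc{S}_1}\otimes \mathrm{id}_{\mc{L}^{|\lambda|-|\mu|}}$, and each summand sits in $\Sym^r(\mc{S}^*\otimes\mc{S})|_X$ as the product $\kappa_{\mu,X}\otimes(e^k\otimes e_k)^{|\lambda|-|\mu|}$, where $e_k$ is a local frame of $\mc{L}$ with dual $e^k$, and $\kappa_{\mu,X}\in K_\mu \mc{S}_1^*\otimes K_\mu \mc{S}_1$ is the identity tensor on the $X$ side. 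The projection $\pi:\mc{S}^*\otimes\mc{S}|_X\to \mc{S}_1^*\otimes\mc{S}_1$ extends multiplicatively to $\Sym^r$ and annihilates every monomial containing a factor from $\mc{S}_1^*\otimes\mc{L}$, $\mc{L}^*\otimes\mc{S}_1$, or $\mc{L}^*\otimes\mc{L}$. Therefore $\pi_0$ kills each $\mu$-term above with $|\lambda|-|\mu|>0$, leaving only the $\mu=\lambda$ contribution, which equals $\kappa_{\lambda,X}$ (and is zero when $\lambda\notin \mathcal{P}(k-1,r)$, since then $K_\lambda\mc{S}_1=0$).

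The main technical obstacle is justifying the identification of $\mathrm{id}_{K_\mu \mc{S}_1}\otimes \mathrm{id}_{\mc{L}^{|\lambda|-|\mu|}}$ with the concrete product $\kappa_{\mu,X}\otimes(e^k\otimes e_k)^{|\lambda|-|\mu|}$ inside $\Sym^r(\mc{S}^*\otimes\mc{S})|_X$. The cleanest approach is a fiberwise computation using the basis of $K_\lambda \mc{S}|_X$ indexed by semistandard Young tableaux of shape $\lambda$ with entries in $\{1,\dots,k\}$: each such tableau splits canonically into the subtableau of entries $\le k-1$ (a SSYT of some subshape $\mu$) and the boxes containing $k$ (which form the horizontal strip $\lambda/\mu$), and the trace $\sum_T e^T\otimes e_T$ representing $\kappa_{\lambda,Y}$ refactors accordingly as $\sum_\mu\kappa_{\mu,X}\otimes(e^k\otimes e_k)^{|\lambda|-|\mu|}$, after which projecting to the pure $\mc{S}_1^*\otimes\mc{S}_1$ summand yields the claim.
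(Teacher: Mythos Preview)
Your proof is correct and takes a genuinely different, more conceptual route than the paper. The paper writes down $\kappa_\lambda$ explicitly as a sum over indices $a_i,b_i\in\{1,\dots,k\}$ using the normalized Young symmetrizer, then observes that restriction $q_0$ leaves the formula unchanged and that the projection $\pi_0$ simply cuts the index range down to $\{1,\dots,k-1\}$, which immediately yields $\kappa_{\lambda,X}$. Your argument instead identifies $\kappa_\lambda$ with the identity endomorphism of $K_\lambda\mc{S}$ and tracks it through restriction and projection.

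One remark: your detour through the branching decomposition and the identification of each $\mu$-piece inside $\Sym^r$ is more work than necessary. Since the Cauchy decomposition $\Sym^r(U\otimes W)\cong\bigoplus_\lambda K_\lambda U\otimes K_\lambda W$ is natural in each factor, and since $\pi:(\mc{S}|_X)^*\otimes\mc{S}|_X\to\mc{S}_1^*\otimes\mc{S}_1$ factors as the tensor product of the two projections $q':(\mc{S}|_X)^*\to\mc{S}_1^*$ and $p:\mc{S}|_X\to\mc{S}_1$, the induced map on the $\lambda$-summand is $K_\lambda(q')\otimes K_\lambda(p)$. Under the identification $K_\lambda(W^*)\cong(K_\lambda W)^*$ this sends $\phi\in\mathrm{End}(K_\lambda(\mc{S}|_X))$ to $K_\lambda(p)\circ\phi\circ K_\lambda(\iota)$, where $\iota:\mc{S}_1\hookrightarrow\mc{S}|_X$ is the inclusion. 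Applied to $\phi=\mathrm{id}$ this gives $K_\lambda(p\circ\iota)=\mathrm{id}_{K_\lambda\mc{S}_1}=\kappa_{\lambda,X}$ in one step, with no need to invoke Pieri or match individual SSYT summands inside $\Sym^r$. Your SSYT refactoring is essentially the coordinate version of this naturality statement, and is close in spirit to what the paper does.
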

\proof{The natural decomposition $\Sym^r(\mc{S^*\otimes S}) \cong \sum_{\lambda}K_\lambda \mc{S}^*\otimes K_\lambda \mc{S}$ implies that it suffices to consider
$$K_\lambda \mc{S}^*\otimes K_\lambda \mc{S} \xrightarrow{q_0} K_\lambda \mc{S}^*\otimes K_\lambda \mc{S}|_X \xrightarrow{\pi_0} K_\lambda \mc{S}_1^*\otimes K_\lambda \mc{S}_1.$$

We give the explicit expression of $\kappa_\lambda$ using the normalized Young symmetrizer $$c_\lambda = n_\lambda \displaystyle\sum_{g\in R(T), h\in C(T)}\text{sgn}(h) e_{gh}$$ for a Young Tableau $T$ of shape $\lambda$ (we actually do not impose any increasing row / column condition on $T$, so $T$ is just a filling of $\lambda$ with $1,...,r$ ). Recall that one way to define $K_\lambda S$ over complex numbers is $K_\lambda S = \text{Im\ } c_\lambda (S^{\otimes r})$
(see Section 6.1 of Fulton-Harris \cite{MR1153249}),
where $n_\lambda$ is a number.

It is straightforward to verify that
\begin{equation}
\kappa_\lambda =\frac{n_\lambda}{r!}\sum_{g,h, \tau}\text{sgn}(h) \delta^{a_{\tau(1)}}_{b_{\tau\rho(1)}} \cdots \delta^{a_{\tau(r)}}_{b_{\tau\rho(r)}} v_{a_1}\otimes \cdots \otimes v^{b_r},
\end{equation}
where the summation is for $g\in R(T),h\in C(T), \tau\in S_r$ with $\rho=gh$.

Then we observe that $q_0(\kappa_\lambda) = \kappa_\lambda$, and the effect of the projection $\pi_0$ is just changing the summation ranges of $a_i, b_i$ from $\{1,...,k\}$ to $\{1,..., k-1\}$. This proves $\pi_0\circ q_0(\kappa_{\lambda,Y}) = \kappa_{\lambda,X}$.
}

To understand $\pi_{r-1}\circ q_{r-1}$, we consider the following commutative diagram:
{
\newcommand{\A}{\mc{S}}
\newcommand{\B}{\mc{V}}
\newcommand{\nA}{\mc{S}|_X}
\newcommand{\nB}{\mc{V}|_X}
\newcommand{\nC}{\mc{S}_1 }
\newcommand{\nD}{\mc{V}_1}

\begin{equation}
\xymatrix{
\A      \ar[r]  \ar[d]^{}       &       \B  \ar[d]
\\
\nA     \ar[r]  \ar[d]      &       \ \ \nB \ar[d]
\\
\nC     \ar[r]              &       \nD
.}
\end{equation}
}

The first line induces
\begin{equation}
0\to \Sym^r\mc{S} \to \Sym^{r-1}\mc{S}\otimes \mc{V}\to...\to \wedge^r\mc{V} \to \wedge^r\mc{Q}=0
\end{equation}
and hence an isomorphism $H^{r-1}(\Sym^r\mc{S})\cong H^0(\wedge^r\mc{V})$ (from the vanishing of the cohomologies of the terms in between), which in turn indicates
$H^{r-1}(\Sym^r\mc{S}\otimes \wedge^r\mc{V^*})\cong H^0(\wedge^r \mc{V}\otimes \wedge^r \mc{V^*})$.

We then have

{
\newcommand{\B}{H^{r-1}(\Sym^r\mc{S}\otimes \wedge^r\mc{V^*})}
\newcommand{\A}{H^0(\wedge^r \mc{V}\otimes \wedge^r \mc{V^*})}
\newcommand{\nB}{H^{r-1}(\Sym^r\mc{S}\otimes \wedge^r\mc{V^*}|_X)}
\newcommand{\nA}{H^0(\wedge^r \mc{V}\otimes \wedge^r \mc{V^*}|_X)}
\newcommand{\nD}{H^{r-1}(\Sym^r\mc{S}_1\otimes \wedge^r\mc{V}_1^*)}
\newcommand{\nC}{H^0(\wedge^r \mc{V}_1\otimes \wedge^r \mc{V}_1^*)}

\begin{equation}
\xymatrix{
\A      \ar[r]^{\cong}  \ar[d]^{ }      &       \B  \ar[d]
\\
\nA             \ar[d]      &       \ \ \nB \ar[d]
\\
\nC     \ar[r]^{\cong}              &       \nD
.}
\end{equation}
Since $\B$ is the only non-vanishing part of $H^{r-1}(\wedge^r(\mc{V^*\otimes S}))$, we actually have
}

{

\newcommand{\A}{H^0(\wedge^r \mc{V}\otimes \wedge^r \mc{V^*})_0}
\newcommand{\B}{H^{r-1}(\wedge^r(\mc{V^*\otimes S}))_0}
\newcommand{\nA}{H^0(\wedge^r \mc{V}\otimes \wedge^r \mc{V^*}|_X)_0}
\newcommand{\nB}{H^{r-1}(\wedge^r(\mc{V^*\otimes S})|_X)_0}
\newcommand{\nC}{H^0(\wedge^r \mc{V}_1\otimes \wedge^r \mc{V}_1^*)_0}
\newcommand{\nD}{H^{r-1}(\wedge^r(\mc{V}_1^*\mc{\otimes S}_1))_0}

\begin{equation}
\xymatrix{
\A      \ar[r]^{\cong}  \ar[d]^{q_{r-1}^\prime}     &       \B  \ar[d]^{q_{r-1}}
\\
\nA             \ar[d]^{\pi_{r-1}^\prime}       &       \ \ \nB \ar[d]^{\pi_{r-1}}
\\
\nC     \ar[r]^{\cong}              &       \nD
.}
\end{equation}
}
Note that we take the $GL(V)$ and $GL(V_1)$ invariant parts as before.

\begin{lem}\label{lem:iso}
$\pi_{r-1}\circ q_{r-1}$ is an isomorphism.
\end{lem}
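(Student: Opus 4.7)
The plan is to reduce the lemma, via the two horizontal isomorphisms in the displayed diagram, to computing the composition $\pi_{r-1}^\prime\circ q_{r-1}^\prime$ on the left column. First I would observe that since $\wedge^r\mc{V}$ and $\wedge^r\mc{V}^*$ are trivial bundles, both $H^0(\wedge^r\mc{V}\otimes\wedge^r\mc{V}^*)_0$ and $H^0(\wedge^r\mc{V}_1\otimes\wedge^r\mc{V}_1^*)_0$ are one-dimensional, spanned respectively by the identity endomorphism $I$ of $\wedge^r V$ and the identity endomorphism $I^\prime$ of $\wedge^r V_1$, by irreducibility of each exterior power (using $r < n$ from $k \geq 2$) together with Schur's lemma. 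The horizontal isomorphism on the bottom is valid because $r = (n-1)-(k-1)+1$ gives $\wedge^r\mc{Q}_1 = 0$, so the same Koszul argument stated just above the lemma applies verbatim on $X$.

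Next I would compute $\pi_{r-1}^\prime\circ q_{r-1}^\prime(I)$ explicitly. Pick a basis $e_1,\dots,e_{n-1}$ of $V_1$ together with $e_n\in L$, and its dual basis $e_1^*,\dots,e_n^*$. Since $\wedge^r\mc{V}\otimes\wedge^r\mc{V}^*$ is constant, the restriction $q_{r-1}^\prime$ is the identity, while $\pi_{r-1}^\prime$ is induced by the projections $V\to V_1$ and $V^*\to V_1^*$ killing $L$ and $L^*$ respectively. Writing
\[
I \;=\; \sum_{|J|=r} e_J\otimes e_J^* \;=\; I_1 + I_2,
\]
with $I_1$ the sum over $J\subset\{1,\dots,n-1\}$ and $I_2$ the sum over $J\ni n$, every summand of $I_2$ involves a factor of $e_n$ or $e_n^*$ and therefore maps to zero, whereas each term in $I_1$ is preserved, giving $I_1\mapsto I^\prime$. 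Hence $\pi_{r-1}^\prime\circ q_{r-1}^\prime(I) = I^\prime$, so the composition is a nonzero map between one-dimensional spaces, and in particular an isomorphism.

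To conclude, I would invoke commutativity of the outer rectangle of the diagram together with the fact that the top and bottom horizontal arrows are isomorphisms, from which it follows that $\pi_{r-1}\circ q_{r-1}$ is itself an isomorphism. The main point requiring care is this commutativity, which reduces to the naturality of the connecting homomorphisms in the Koszul-type sequences with respect to restriction from $Y$ to $X$ and to the projection $\mc{V}|_X\to\mc{V}_1$; this is the principal technical check and follows because the relevant short exact sequences admit morphisms of complexes induced by the natural maps $\mc{V}\leftarrow\mc{V}|_X\to\mc{V}_1$ and their duals.
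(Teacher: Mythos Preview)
Your proposal is correct and follows essentially the same approach as the paper: both reduce to the left column $\pi_{r-1}^\prime\circ q_{r-1}^\prime$, identify $q_{r-1}^\prime$ as the identity and $\pi_{r-1}^\prime$ as the projection killing the $e_n$-components, and then verify directly that the canonical $GL$-invariant element (the identity endomorphism of $\wedge^r V$) is sent to the corresponding generator for $V_1$. Your use of the subset notation $e_J\otimes e_J^*$ and Schur's lemma is a slightly cleaner packaging of the paper's explicit symmetric-group sum, and your remarks on the one-dimensionality (needing $r\leq n-1$, i.e.\ $k\geq 2$) and on diagram commutativity make explicit what the paper leaves implicit, but the argument is the same.
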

\begin{proof}
It suffices to prove that $\pi_{r-1}^\prime\circ q_{r-1}^\prime$ is an isomorphism. This can be done by direct computation. Note that $q_{r-1}^\prime$ is induced from the identity map $$H^0(\wedge^r \mc{V}\otimes \wedge^r \mc{V^*}) \to H^0(\wedge^r \mc{V}\otimes \wedge^r \mc{V^*}|_X),$$ which is $$\wedge^r V\otimes \wedge^r V^* \to \wedge^r V\otimes \wedge^r V^*, $$ and $\pi_{r-1}^\prime$ is induced from the projection to $(\wedge^r V_1\otimes \wedge^r V^*_1)$.

It is then straight forward to observe that $\pi_{r-1}^\prime\circ q_{r-1}^\prime$ maps
$$\displaystyle\sum_{a_i,b_i =1}^{n}\sum_{\rho \in S_r} (-1)^\rho \delta^{a_1}_{b_{\rho(1)}} \cdots \delta^{a_r}_{b_{\rho(r)}}e_{a_1}\otimes \cdots \otimes e_{a_r}\otimes e^{b_1}\otimes \cdots \otimes e^{b_r},$$
 the generator of the one dimensional space $H^0(\wedge^r \mc{V}\otimes \wedge^r \mc{V^*})_0$, to
$$\displaystyle\sum_{a_i,b_i =1}^{n-1}\sum_{\rho \in S_r} (-1)^\rho \delta^{a_1}_{b_{\rho(1)}} \cdots \delta^{a_r}_{b_{\rho(r)}}e_{a_1}\otimes \cdots\otimes e_{a_r}\otimes e^{b_1}\otimes \cdots\otimes e^{b_r},$$
 the generator of $H^0(\wedge^r \mc{V}_1\otimes \wedge^r \mc{V}_1^*)_0$.
\end{proof}

\begin{thm}\label{thm:kernel_independent_of_n}
When $r=n-k+1$, the kernel $\kappa$ of $$\Delta: H^0(\Sym^r(\mc{S^*\otimes S}))\to H^r(\wedge^r\mc{E}^*)$$ takes the same form for all $G(k+c,n+c)$, in terms of $I_1,...,I_r$.
\end{thm}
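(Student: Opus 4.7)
The plan is to induct on $c \ge 1$, so it suffices to compare $G(k,n)$ with $G(k+1, n+1)$, both of which have $r = n-k+1 = (n+1)-(k+1)+1$. I write $V = V_1\oplus L$ with $\dim V_1 = n$ and $\dim L = 1$, and use the embedding $X = G(k, V_1)\hookrightarrow Y = G(k+1, V)$ built up in this section. Given an $n\times n$ matrix $B$, let $\tilde{B}$ be the block-diagonal $(n+1)\times(n+1)$ matrix with blocks $B$ and $0$. By Lemma \ref{CD_gk-1n-1}, $f_{\tilde{B}}$ restricts and projects compatibly to $f_B$, so the commutative diagram (\ref{CDq0}) is available for this choice of deformation.

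By Theorem \ref{glV_invariance_r=n-k+1}, on each Grassmannian the kernel $\mathbb{K}_r$ is generated by the image, under the horizontal (Koszul-connecting) arrow of (\ref{CDq0}), of the canonical generator of the one-dimensional $GL$-invariant subspace $H^{r-1}(\wedge^r(\mc{V^*\otimes S}))_0$. Lemma \ref{lem:iso} states that $\pi_{r-1}\circ q_{r-1}$ is an isomorphism of these one-dimensional invariant spaces, sending the canonical class on $Y$ to the canonical class on $X$, while the preceding lemma gives $\pi_0\circ q_0(\kappa_{\lambda, Y}) = \kappa_{\lambda, X}$. Chasing (\ref{CDq0}) and writing the kernel generator on $Y$ as $G_Y(\tilde{B}) = \sum_\lambda c_\lambda^Y(\tilde{B})\,\kappa_{\lambda, Y}$ and on $X$ as $G_X(B) = \sum_\lambda c_\lambda^X(B)\,\kappa_{\lambda, X}$, I deduce $c_\lambda^Y(\tilde{B}) = c_\lambda^X(B)$ for every $\lambda$ (up to a single common scalar, which I absorb into the normalization of the generator since the ideal is unchanged).

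Next I upgrade this coefficient identity to an identity of polynomials in $I_1, \ldots, I_r$. By Theorem \ref{similarity} the kernel, and hence each $c_\lambda$, depends only on the similarity class of its argument; by Theorem \ref{B_ij-degree} and the corollary following it, each $c_\lambda$ has total $B$-degree at most $r$, so it is a polynomial in $I_1,\ldots, I_r$. The characteristic polynomial of $\tilde{B}$ factors as $t\cdot\det(tI + B)$, giving $I_j(\tilde{B}) = I_j(B)$ for $j\le n$, in particular for $j\le r$. Thus $c_\lambda^Y(I_1,\ldots,I_r) = c_\lambda^X(I_1,\ldots,I_r)$ holds for every $(I_1,\ldots,I_r)$ arising from some $B$; since those tuples exhaust $\mathbb{C}^r$, the polynomials $c_\lambda^Y$ and $c_\lambda^X$ are identically equal, closing the induction.

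The main obstacle I anticipate is pinning down the scalar in the chase through (\ref{CDq0}): although Lemma \ref{lem:iso} identifies the two one-dimensional $GL$-invariant spaces isomorphically, one must check that the canonical generators are matched without the appearance of a $(k,n)$-dependent scalar that would spoil the polynomial identity. Once this normalization issue is handled (and the explicit formula in the proof of Lemma \ref{lem:iso} strongly suggests it is harmless), the remaining steps---restricting attention to block-diagonal $\tilde{B}$ and then passing to an arbitrary $\tilde{B}$ by polynomial interpolation in $(I_1,\ldots,I_r)$---are routine.
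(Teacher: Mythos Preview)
Your proposal is correct and follows essentially the same route as the paper: restrict from $G(k+1,n+1)$ to $G(k,n)$ via the block-diagonal extension $\tilde B = \mathrm{diag}(B,0)$, use diagram~(\ref{CDq0}) together with Lemma~\ref{lem:iso} and the lemma on $\kappa_\lambda$ to match kernel generators, observe $I_j(\tilde B) = I_j(B)$, and conclude by a density/identity-theorem argument that the polynomial expressions in $I_1,\ldots,I_r$ coincide. Two small points of imprecision worth tightening: (i) the claim that the tuples $(I_1,\ldots,I_r)$ ``exhaust $\mathbb{C}^r$'' should be weakened to ``fill an open neighborhood of the origin'' (taking $B$ small keeps you off the degenerate locus), which is what the paper does and is all you need for the polynomial identity; (ii) the scalar worry you flag is indeed harmless, since the explicit computation in the proof of Lemma~\ref{lem:iso} shows the canonical $GL$-invariant generators are matched on the nose.
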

\begin{proof}
Applying Lemma \ref{lem:iso} to (\ref{CDq0}), we find that $\pi_0\circ q_0(\kappa_B) = \kappa_{B_1}$, when $B=\begin{pmatrix}
B_1 & 0\\ 0 & 0
\end{pmatrix}$.

Let $\kappa_B=\sum s^{\lambda,B}\kappa_\lambda$, $\kappa_{B_1}=\sum s^{\lambda,B_1}\kappa_\lambda$, then $s^{\lambda,B} = s^{\lambda,B_1}$.
Now let $$A_r=\{\alpha=(\alpha_1,...,\alpha_r)\in \mathbb{Z}^r_{\geq 0}|\sum_{i=1}^r \alpha_i \leq r\}$$
and $I^\alpha = \prod_{i=1}^r I_i^{\alpha_i}$.
Observe that $I_j(B) = I_j(B_1)$, so we can write
\begin{equation}\begin{array}{l}
\displaystyle
s^{\lambda,B} = \sum_{\alpha \in A_r} s^{\lambda,n}_{\alpha} I^{\alpha},\\
\displaystyle
s^{\lambda,B_1} = \sum_{\alpha \in A_r} s^{\lambda,n-1}_{\alpha} I^{\alpha}.
\end{array}
\end{equation}

Note that this holds for arbitrary $B_1$ with $B=\begin{pmatrix}
B_1 & 0\\ 0 & 0
\end{pmatrix}$. For $I_1, I_2, ..., I_r$, we can always solve the equation
$t^r+\sum_{i=1}^{r} t^{r-i} (-1)^i I_i =0$ and get $r$ roots $t_1,...,t_r$.
Then the matrix $\text{diag}\{t_1,...,t_r\}$ has invariants $I_1,...,I_r$.
We can take $I_1, I_2, ..., I_r$ sufficiently small such that our matrix $\text{diag}\{t_1,...,t_r\}$ is not in the degenerate locus. This implies that
\begin{equation}
\displaystyle\sum_{\alpha \in A_r} s^{\lambda,n}_{\alpha} I^{\alpha}
 = \sum_{\alpha \in A_r} s^{\lambda,n-1}_{\alpha} I^{\alpha}
\end{equation}
as an equality of two holomorphic functions of variables $I_1,..,I_r$ holds on an open set. So it holds in general by the identity theorem of
holomorphic functions of several variables.

This shows that $s^{\lambda,n}_\alpha =s^{\lambda,n-1}_\alpha$ for arbitrary $n$ and finishes the proof of the theorem.
\end{proof}

For later use, we require the following
\begin{defn}\label{kappa_tilde_def}
$$\tilde{\kappa}_{(r)} \equiv \sum_{i=0}^{{\rm min}\{r,n\}} I_i \ \kappa_{(r-i)} \cdot \kappa_{(1)}^i .$$
\end{defn}

\subsection{$B=\varepsilon I$}\label{sec:b=eI} For the special case $B=\varepsilon I$, we can derive the expression of $\kappa_B$ directly.

In this case, we have a map of short exact sequences
\begin{equation}
\xymatrix{
0\ar[r]&\Omega\ar[r]\ar[d] &\mc{V}^*\otimes \mc{S} \ar[r]^{f}\ar@{=}[d] &\mc{S}^*\otimes \mc{S}\ar[d]^{\cong}_{h}\ar[r]&0\\
0\ar[r]&\mc{E}^*\ar[r]&\mc{V}^*\otimes \mc{S} \ar[r]^{f_{B}} &\mc{S}^*\otimes \mc{S}
\ar[r]&0,
}
\end{equation}
where $h$ is given by $h: \sigma^a_b \mapsto \sigma^a_b + \varepsilon (\text{tr\ } \sigma) \delta^a_b$, for any local section $\sigma^a_b$ of $\mc{S^*\otimes S}$.

\begin{thm}\label{thm:B=epsilon I case}
When $B = \varepsilon I$,
\begin{equation}
h(\kappa_{(r)})  =
\sum_{j=0}^{k+r-n-1}  \varepsilon^j {k+r-n-1 \choose j}
\kappa_{(1)}^j \tilde{\kappa}_{(r-j)} .
 \end{equation}
\end{thm}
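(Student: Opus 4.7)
The plan is to identify the map induced by $h$ on $H^0(\Sym^*(\mc{S}^*\otimes\mc{S}))$ as an eigenvalue shift in the symmetric-polynomial realization of the ring, compute $h(\kappa_{(r)})$ by a generating-function calculation, and then recast the answer in the $\tilde\kappa$-form via Vandermonde's identity.

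By Remark~\ref{rmk:mult} the ring $\bigoplus_r H^0(\Sym^r(\mc{S}^*\otimes\mc{S}))$ is the ring $\Lambda_k$ of symmetric polynomials in variables $x_1,\ldots,x_k$ (the eigenvalues of a local matrix section of $\mc{S}^*\otimes\mc{S} \cong \mathrm{End}(\mc{S})$), with $\kappa_\lambda \leftrightarrow s_\lambda$; in particular $\kappa_{(r)} \leftrightarrow h_r$ (the complete homogeneous symmetric polynomial of degree $r$) and $\kappa_{(1)} \leftrightarrow e_1 = x_1 + \cdots + x_k$. The fibrewise map $h(\sigma) = \sigma + \varepsilon(\mathrm{tr}\,\sigma)\,I$ is $GL(\mc{S})$-equivariant and sends a matrix with eigenvalues $x_1,\ldots,x_k$ to one with eigenvalues $x_i + \varepsilon \sum_j x_j$, so the induced ring endomorphism of $\Lambda_k$ is the substitution $p(x) \mapsto p(x_1+\alpha,\ldots,x_k+\alpha)$ with $\alpha := \varepsilon e_1$.

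Next I would compute $h_r(x_1+\alpha,\ldots,x_k+\alpha)$ via the generating function $\sum_r h_r(y)\,t^r = \prod_{i=1}^k (1-y_i t)^{-1}$. Setting $y_i = x_i + \alpha$ and introducing $s := t/(1-\alpha t)$, the product telescopes to
$$(1-\alpha t)^{-k}\prod_i (1-x_i s)^{-1} \;=\; (1+\alpha s)^k \sum_m h_m(x)\, s^m.$$
Expanding $(1+\alpha s)^k$ binomially, using $t^N(1-\alpha t)^{-N} = \sum_j \binom{N+j-1}{j}\alpha^j\, t^{N+j}$, and collapsing the double sum via Vandermonde's identity $\sum_{i}\binom{r-1}{j-i}\binom{k}{i} = \binom{k+r-1}{j}$ yields
$$h_r(x+\alpha) \;=\; \sum_{j=0}^{r}\binom{k+r-1}{j}\alpha^j\, h_{r-j}(x),$$
which translates to
$$h(\kappa_{(r)}) \;=\; \sum_{j=0}^{r}\binom{k+r-1}{j}\varepsilon^j \kappa_{(1)}^j \kappa_{(r-j)}.$$

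Finally I would reconcile this with the stated formula. For $B = \varepsilon I$ the characteristic polynomial $\det(\lambda I + B) = (\lambda+\varepsilon)^n$ gives $I_i = \binom{n}{i}\varepsilon^i$, so Definition~\ref{kappa_tilde_def} reads $\tilde\kappa_{(r-j)} = \sum_i \binom{n}{i}\varepsilon^i \kappa_{(r-j-i)}\kappa_{(1)}^i$. Substituting this into the theorem's RHS and grouping terms by total $\varepsilon$-degree $l = i + j$, a second Vandermonde summation $\sum_{j+i=l}\binom{k+r-n-1}{j}\binom{n}{i} = \binom{k+r-1}{l}$ reproduces exactly the expression displayed above. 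The main conceptual step is the eigenvalue interpretation of $h$ in the first paragraph; the remainder is explicit generating-function and binomial bookkeeping, with the choice to present the answer in the $\tilde\kappa$-form (rather than in the more immediate $\kappa_{(1)}^j\kappa_{(r-j)}$ expansion) dictated by its later role in describing the kernel $\mathbb{K}_r$.
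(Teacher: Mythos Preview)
Your proof is correct and reaches the same intermediate identity
\[
h(\kappa_{(r)})=\sum_{t=0}^{r}\binom{k+r-1}{t}\varepsilon^{t}\,\kappa_{(1)}^{t}\,\kappa_{(r-t)}
\]
that the paper establishes, and from there both arguments finish with the same Vandermonde rewriting into the $\tilde\kappa$ form. The route to that identity, however, is genuinely different. The paper works component-wise: it writes $\kappa_{(r)}$ as the symmetrized Kronecker tensor $(r!)^{-1}\delta^{(a_1\cdots a_r)}_{b_1\cdots b_r}$, applies $h^{\otimes r}$ one factor at a time, and closes an induction on an auxiliary family $Y_s$ to obtain the binomial coefficient $\binom{k+r-1}{t}$. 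You instead interpret $H^0(\Sym^*(\mc{S}^*\otimes\mc{S}))$ as conjugation-invariant polynomial functions on $\mathrm{End}(S)$, identify $\kappa_{(r)}$ with $h_r$ of the eigenvalues, observe that $h$ shifts each eigenvalue by $\varepsilon e_1$, and then run a clean generating-function computation $\sum h_r(x+\alpha)t^r=(1-\alpha t)^{-k}\sum h_m(x)s^m$. Your approach is shorter and more conceptual; the paper's has the virtue of being self-contained at the level of tensor components, without invoking the eigenvalue/invariant-theory identification.

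The one step you should tighten is the sentence ``so the induced ring endomorphism of $\Lambda_k$ is the substitution $x_i\mapsto x_i+\alpha$.'' This is true, but it is not immediate from $GL(\mc{S})$-equivariance alone. A clean justification: under the trace pairing the fibre of $\Sym^r(\mc{S}^*\otimes\mc{S})$ is identified with degree-$r$ polynomial functions on $\mathrm{End}(S)$, the invariant $\kappa_\lambda=\mathrm{id}_{K_\lambda S}$ becomes the function $\sigma\mapsto\mathrm{tr}\,K_\lambda(\sigma)=s_\lambda(x)$, and $\Sym^r(h)$ acts on such functions by precomposition with the trace-adjoint $h^*$; since $h(\sigma)=\sigma+\varepsilon(\mathrm{tr}\,\sigma)I$ is self-adjoint for the trace form, this is precomposition with $h$ itself, i.e.\ the eigenvalue shift. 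With that paragraph added, your argument is complete.
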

\begin{proof}
$\kappa_{(r)}$ is the identity bundle map on $\mathrm{Sym}^r \mathcal{S}$.
For any section $\sigma$ of $\mathcal{S^*\otimes S}$, $h$ is defined by
\[
h (\sigma^a_b) = \sigma^a_b + \varepsilon (\mathrm{Tr} \sigma) \delta^a_b,
\]
where $h (\sigma^a_b)$ represents the component of the image of $\sigma$ under $h$.
More generally, given a section $T$ of $(\mathcal{S^*\otimes S})^{\otimes r}$, the tensor product of
$r$ copies of $h$ is given by $h^r = h_1 \otimes h_2 \cdots \otimes h_r$,
where $h_i$ acts only on the $i$'th factor of $(\mathcal{S^*\otimes S})^{\otimes r}$,
specifically,
\[
h_i (T^{a_1\cdots a_r}_{b_1\cdots b_r}) =
T^{a_1\cdots a_r}_{b_1\cdots b_r} + \varepsilon
T^{a_1\cdots a_{i-1} c a_{i+1} \cdots a_r}_{b_1\cdots b_{i-1} c b_{i+1} \cdots b_r} \delta^{a_i}_{b_i}.
\]
$\kappa_{(r)}$ has components $(r!)^{-1}\delta^{(a_1\cdots a_r)}_{b_1\cdots b_r}$, $a_i=1,\cdots,k, b_i=1,\cdots,k$,
where $\delta^{(a_1\cdots a_r)}_{b_1\cdots b_r}$ denotes $\delta^{(a_1}_{b_1}\delta^{a_2}_{b_2}\cdots \delta^{a_r)}_{b_r}$,
and $(\cdots)$ denotes the symmetrization of indices.
We denote $h_1 \otimes h_2 \cdots \otimes h_i$ by $h^i$ for $i=1,\cdots r$.
Thus one can compute, $h^r(\kappa_{(r)})$ has components
\begin{equation}\label{h^r step1}
\begin{split}
h^r((r!)^{-1}\delta^{(a_1\cdots a_r)}_{b_1\cdots b_r}) &=
(r!)^{-1} h^{r-1} \left( \delta^{(a_1\cdots a_r)}_{b_1\cdots b_r} + \varepsilon
\delta^{(a_1\cdots a_{r-1}c)}_{b_1\cdots b_{r-1}c} \delta^{a_r}_{b_r} \right) \\
= &(r!)^{-1} h^{r-1} \left(\delta^{(a_1\cdots a_r)}_{b_1\cdots b_r}
+\varepsilon \delta^{(a_1\cdots a_{r-1})}_{b_1\cdots b_{r-1}}\delta^c_c \delta^{a_r}_{b_r}
+\varepsilon \sum_{i=1}^{r-1} \delta^{(a_1\cdots a_{r-1})}_{b_1\cdots \hat{b}_i \cdots b_{r-1} c} \delta^c_{b_i} \delta^{a_r}_{b_r}
\right),\\
= &(r!)^{-1} h^{r-1} \left(\delta^{(a_1\cdots a_r)}_{b_1\cdots b_r} + \varepsilon (k+r-1)
\delta^{(a_1\cdots a_{r-1})}_{b_1\cdots b_{r-1}}\delta^{a_r}_{b_r} \right).
\end{split}
\end{equation}
Define $Y_s$ to have components
\[
\delta^{(a_1\cdots a_r)}_{b_1\cdots b_r}+ \sum_{t=1}^s \sum_{r-s+1 \leqslant i_1 < \cdots < i_t \leqslant r}
\varepsilon^t \frac{(k+r-1)!}{(k+r-t-1)!}
\delta^{(a_1 \cdots \hat{a}_{i_1} \hat{a}_{i_2} \cdots \hat{a}_{i_t} \cdots a_r)}_
{b_1 \cdots \hat{b}_{i_1} \hat{b}_{i_2} \cdots \hat{b}_{i_t} \cdots b_r}
\delta^{a_{i_1}}_{b_{i_1}} \cdots \delta^{a_{i_t}}_{b_{i_t}}.
\]
We claim that
\begin{equation}
h^r(\kappa_{(r)}) = (r!)^{-1} h^{r-s} (Y_s),
\end{equation}
for $s=1,2,\cdots,r$.
This is true for $s=1$ due to \eqref{h^r step1}.
Let's assume the claim is true for some $s$, and prove that it is also true for $s+1$.
This can be shown through a direct computation as follows:
\[
\begin{split}
h^{r-s} ((Y_s)^{a_1\cdots a_r}_{b_1\cdots b_r})& \\
=&h^{r-s-1} \Biggl((Y_s)^{a_1\cdots a_r}_{b_1\cdots b_r} +
\\
& \sum_{t=0}^{s} \sum_{r-s+1 \leqslant i_1 < \cdots < i_t \leqslant r} \varepsilon^{t+1}
\frac{(k+r-1)!}{(k+r-t-2)!} \delta^{(a_1 \cdots \hat{a}_{r-s} \hat{a}_{i_1} \hat{a}_{i_2} \cdots \hat{a}_{i_t} \cdots a_r)}
_{b_1 \cdots \hat{b}_{r-s} \hat{b}_{i_1} \hat{b}_{i_2} \cdots \hat{b}_{i_t} \cdots b_r}
\delta^{a_{r-s}}_{b_{r-s}} \delta^{a_{i_1}}_{b_{i_1}} \cdots \delta^{a_{i_t}}_{b_{i_t}}
\Biggr), \\
= &h^{r-s-1}((Y_{s+1})^{a_1\cdots a_r}_{b_1\cdots b_r}).
\end{split}
\]
Thus we can take $s=r$ to have
\begin{equation}\label{h^r stepr}
h^r(\kappa_{(r)}) = (r!)^{-1} Y_r.
\end{equation}
Because $(r-t)^{-1}\delta^{(a_1 \cdots \hat{a}_{i_1} \hat{a}_{i_2} \cdots \hat{a}_{i_t} \cdots a_r)}_
{b_1 \cdots \hat{b}_{i_1} \hat{b}_{i_2} \cdots \hat{b}_{i_t} \cdots b_r}$ are the
components of $\kappa_{(r-t)}$, \eqref{h^r stepr} can be written as
\begin{equation}
\begin{split}
h^r(\kappa_{(r)})& \\
=&\kappa_{(r)} + \sum_{t=1}^{r} \sum_{1 \leqslant i_1 < \cdots < i_t \leqslant r}
\varepsilon^t \frac{(k+r-1)!(r-t)!}{(k+r-t-1)!r!} \kappa_{(r-t)} \kappa_{(1)}^t,\\
=&\kappa_{(r)} + \sum_{t=1}^{r} \varepsilon^t  {r \choose t}
\frac{(k+r-1)!(r-t)!}{(k+r-t-1)!r!} \kappa_{(r-t)} \kappa_{(1)}^t,\\
=&\sum_{t=0}^r {k+r-1 \choose t} \kappa_{(r-t)} \kappa_{(1)}^t \varepsilon^t.
\end{split}
\end{equation}

With the aid of the combinatorial formula
\[
{m+n \choose l} = \sum_{i=0}^{m} {m \choose i}{n \choose l-i},
\]
where ${n \choose i}=0$ when $i < 0$ or $i>n$, one can compute, for $r > n-k$,
\begin{equation}
\begin{split}
h(\kappa_{(r)}) &= \sum_{i=0}^r \varepsilon^i {k+r-1 \choose i} \kappa_{(r-i)} \kappa_{(1)}^i\\
&= \sum_{i=0}^r \sum_{j=0}^{k+r-n-1} \varepsilon^j \varepsilon^{i-j} {k+r-n-1 \choose j}{n \choose i-j} \kappa_{(r-i)} \kappa_{(1)}^i\\
&= \sum_{j=0}^{k+r-n-1} \sum_{i=j}^{\min\{n+j,r\}} \varepsilon^j {k+r-n-1 \choose j} I_{i-j} \kappa_{(r-i)} \kappa_{(1)}^i\\
&= \sum_{j=0}^{k+r-n-1}  \varepsilon^j {k+r-n-1 \choose j}
\left( \sum_{i=0}^{\min\{n,r-j\}} I_i \kappa_{(r-j-i)} \kappa_{(1)}^i
\right) \kappa_{(1)}^j\\
&= \sum_{j=0}^{k+r-n-1}  \varepsilon^j {k+r-n-1 \choose j}
\kappa_{(1)}^j \tilde{\kappa}_{(r-j)}.
\end{split}
\end{equation}

\end{proof}

\subsection{The result for the $r=n-k+1$ case}

We then have an algorithm to compute the kernel for $r=n-k+1$.

From Theorem \ref{B_ij-degree}, \ref{similarity} and \ref{glV_invariance_r=n-k+1} we know that the kernel $\kappa$ is of the form
$$
\kappa = \sum s^{\lambda} \kappa_\lambda
$$
where $s^{\lambda}$ is a polynomial in $I_i$,  $i=1,...,r$,
the similarity invariants of the characteristic polynomial of $B$. Moreover Theorem \ref{B_ij-degree} and \ref{glV_invariance_r=n-k+1} guarantee that the degree of the polynomial is no more than $r$.

From Theorem \ref{thm:kernel_independent_of_n} we know that $s^{\lambda}$ has the form
\begin{equation}\label{abstract_s_lambda}
s^{\lambda} = \sum_{\alpha \in A_r} s^{\lambda}_{\alpha} I^{\alpha},
\end{equation}
where $ s^{\lambda}_{\alpha}, \alpha \in A_r $, are independent of $n$.


To determine $s^{\lambda}_\alpha$, we consider specific choices of $B$ on $G(k,n)$ with $k\geq r$.
We take $k\geq r$ because this is the `stable-range'. Namely, when $k<r$, some $\kappa_\lambda$ might be 0 for dimension reason, hence cannot be seen in the kernel relation even if they are there for $k\geq r$.

We first work out the general $h$ maps making the following diagram commute:
{
\newcommand{\A}{\mc{V^*\otimes S}}
\newcommand{\B}{\mc{S^*\otimes S}}
\newcommand{\nA}{\mc{V^*\otimes S}}
\newcommand{\nB}{\mc{S^*\otimes S}}

\begin{equation}\label{general_h_diagram}
\xymatrix{
\A      \ar[r]^{f_B }\ar@{=}[d]         &       \B  \ar[d]^{h}
\\
\nA     \ar[r]^{f_{\tilde{B}} \ }   &       \nB\ .
}
\end{equation}
}

\begin{lem}\label{general_h_iso}
If $\tilde{B}=(1+k \varepsilon) B + \varepsilon I, 1+k\varepsilon \neq 0$, then diagram \eqref{general_h_diagram} commutes.
\end{lem}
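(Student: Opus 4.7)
The plan is to guess the form of $h$ by analogy with the $B=\varepsilon I$ case treated in Section \ref{sec:b=eI} and then verify the commutativity of \eqref{general_h_diagram} by an elementary index computation. Specifically, motivated by the isomorphism $h\colon \sigma^a_b \mapsto \sigma^a_b + \varepsilon(\mathrm{tr}\,\sigma)\delta^a_b$ used in the proof of Theorem \ref{thm:B=epsilon I case}, I would take the same $h$ and simply check that with this choice $h\circ f_B = f_{\tilde B}$ for $\tilde B = (1+k\varepsilon)B + \varepsilon I$.

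The computation is straightforward. Using the explicit formula \eqref{map:f_B}, one has
\begin{equation*}
(h\circ f_B)(c)^a_b = c^a_i v^i_b + c^d_i B^i_j v^j_d\,\delta^a_b + \varepsilon\,(c^d_i v^i_d)\,\delta^a_b + k\varepsilon\,c^d_i B^i_j v^j_d\,\delta^a_b,
\end{equation*}
so that the trace part gives the tensor $\bigl((1+k\varepsilon)B + \varepsilon I\bigr)^i_j = \tilde B^i_j$, matching $f_{\tilde B}$ term by term. This is essentially the only step: comparing coefficients of $c^a_i v^i_b$ forces the coefficient of $\sigma^a_b$ in $h$ to be $1$, and comparing the $\delta^a_b$-part then determines the relation between $\tilde B$, $B$, and $\varepsilon$ uniquely.

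The role of the hypothesis $1+k\varepsilon \neq 0$ is to ensure that $h$ is an honest isomorphism of bundles, with inverse $\sigma^a_b \mapsto \sigma^a_b - \frac{\varepsilon}{1+k\varepsilon}(\mathrm{tr}\,\sigma)\delta^a_b$; this is the only non-trivial point and it follows immediately by computing $h^{-1}\circ h$ on the two-dimensional space of $GL(k)$-equivariant bundle maps spanned by $\sigma^a_b$ and $(\mathrm{tr}\,\sigma)\delta^a_b$. I do not expect a real obstacle here: the assertion is algebraic and local, and the index manipulation above proves both directions.
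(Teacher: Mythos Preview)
Your proposal is correct and follows essentially the same approach as the paper: both verify the identity $h\circ f_B = f_{\tilde B}$ by a direct index computation with $h(\sigma)^a_b = \sigma^a_b + \varepsilon(\mathrm{tr}\,\sigma)\delta^a_b$. The only cosmetic difference is that the paper first notes $\mathrm{Hom}(\mc{S^*\otimes S},\mc{S^*\otimes S})\cong\mathbb{C}^2$ to parametrize all candidate $h$'s as $\sigma^a_b\mapsto k_1\sigma^a_b+k_2(\mathrm{tr}\,\sigma)\delta^a_b$ and then deduces $k_1=1$ from the off-diagonal terms, whereas you guess the right $h$ at the outset and verify; the computations are otherwise identical.
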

\begin{proof}
Since Hom$(\mathcal{S}^*\otimes\mathcal{S},\mathcal{S}^*\otimes\mathcal{S}) \cong H^0(\Sym^2(\mathcal{S}^*\otimes\mathcal{S})\oplus \wedge^2(\mathcal{S}^*\otimes\mathcal{S}))\cong H^0(\Sym^2(\mathcal{S}^*\otimes\mathcal{S})) \cong \mathbb{C}^2$,
we have two parameters and a general map $h$ can be written as
$\sigma^a_b \mapsto k_1\sigma^a_b + k_2 (\text{tr\ } \sigma) \delta^a_b$, where $\delta^a_b$ is the Kronecker delta function. It is an isomorphism when $k_1(k_1+k_2 k) \neq 0$, and the inverse is $(k^\prime_1,k^\prime_2) = ( \frac{1}{k_1}, -\frac{k_2}{k_1(k_1+k_2k)})$.

Writing the condition $h\circ f_B = f_{\tilde{B}}$ in coordinates, we have

\begin{equation}
k_1(c_i^a v^i_b + c_i^d B^i_j v^j_d \delta_b^a) + k_2 \delta_b^a (c_i^d v^i_d + k c_i^d B^i_j v^j_d \delta_b^a) = c_i^a v^i_b + c_i^d \tilde{B}^i_j v^j_d \delta_b^a.
\end{equation}
Take $a\neq b$, we find $k_1 =1$. Take $a=b$, we have $(1+k k_2) c_i^d B^i_j v^j_d + k_2 c_i^d v^i_d = c_i^d \tilde{B}^i_j v^j_d$, i.e.
$\tilde{B}^i_j =  (1+k k_2) B^i_j + k_2\delta^i_j$.
\end{proof}
This is the second transformation on $B$ that produces isomorphic vector bundles,
in addition to the similarity transformation. We will refer this as $\varepsilon$-Transformation, and write
\begin{equation}
\tilde{B}^i_j = ET(B)^i_j=  (1+k \varepsilon) B^i_j + \varepsilon\delta^i_j, 1+k\varepsilon \neq 0.
\end{equation}

Now let's see how to use this lemma and results of Section \ref{sec:b=eI} to determine the general form of the kernel.

\begin{thm}\label{n-k+1_generator}
For a generic deformed tangent bundle $\mc{E}$ over $X=G(k,n)$,
when $r=n-k+1$, the kernel of $H^0(\Sym^r\mc{(S^*\otimes S)}) \to $
$H^r(\wedge^r\mc{E}^*)$ is generated by
\begin{equation}\label{kernel_n-k+1}
 \tilde{\kappa}_{(r)} =
\sum_{i=0}^{r} I_i \ \kappa_{(r-i)} \cdot \kappa_{(1)}^i.
 \end{equation}
\end{thm}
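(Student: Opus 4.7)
My plan combines the one-dimensionality of the kernel, its polynomial dependence on the similarity invariants of $B$, and an explicit computation at $B = \varepsilon I$.

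First, by Theorem \ref{thm:kernel_independent_of_n}, the form of the kernel generator in terms of $I_1,\ldots,I_r$ is independent of $n$ so long as $n-k+1 = r$, so I pass to the stable range $k \geq r$, in which the Schur functions $\{\kappa_\lambda\}_{|\lambda|=r}$ are linearly independent in $H^0(\Sym^r(\mc{S^*\otimes S}))$. By Theorem \ref{glV_invariance_r=n-k+1}, $\mathbb{K}_r$ is at most one-dimensional: its generator comes from the one-dimensional $GL(V)$-invariant direction in $H^{r-1}(\wedge^r(\mc{V^*\otimes S}))$. Hence it suffices to exhibit one nonzero element and identify its form. Theorems \ref{similarity}, \ref{B_ij-degree} (with its corollary), and \ref{thm:kernel_independent_of_n} force the generator to admit an expansion
\[
\kappa_B = \sum_{\lambda \in \mathcal{P}(k,r)} s^\lambda(I_1,\ldots,I_r)\,\kappa_\lambda,
\qquad s^\lambda = \sum_{\alpha \in A_r} s^\lambda_\alpha\, I^\alpha,
\]
with $n$-independent coefficients $s^\lambda_\alpha$ and total $B$-degree at most $r$ on each monomial.

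Next, I will compute $\kappa_B$ on the locus $B = \varepsilon I$ using the $\varepsilon$-transformation of Lemma \ref{general_h_iso}. The induced bundle map $h:\mc{S^*\otimes S}\to\mc{S^*\otimes S}$ fits into a morphism of short exact sequences identifying the cotangent bundle $\Omega$ with $\mc{E}^*|_{B=\varepsilon I}$. By naturality, the classical kernel generator $\kappa_{(r)}$ for the cotangent bundle is sent to a generator $h^r(\kappa_{(r)})$ of $\mathbb{K}_r$ at $B = \varepsilon I$. Since $r = n-k+1$ forces $k+r-n-1=0$, Theorem \ref{thm:B=epsilon I case} collapses to its $j=0$ summand and yields $h^r(\kappa_{(r)}) = \tilde{\kappa}_{(r)}|_{B=\varepsilon I}$, pinning down the value of $s^\lambda$ along the $\varepsilon I$ locus.

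The main obstacle is the final step: promoting this specialization into a polynomial identity in $I_1,\ldots,I_r$ valid for generic $B$. Writing $I_i = \binom{n}{i}\varepsilon^i$ at $B=\varepsilon I$ and collecting coefficients of $\varepsilon^d$ in the identity $\sum_\lambda s^\lambda\,\kappa_\lambda = \tilde{\kappa}_{(r)}$ produces, for each $\lambda$ and each weighted degree $d \leq r$, a relation
\[
\sum_{\alpha:\,\sum_i i \alpha_i = d} s^\lambda_\alpha \prod_i \binom{n}{i}^{\alpha_i} \;=\; \tilde{s}^\lambda_{\beta(d)} \binom{n}{d},
\]
where $\beta(d) \in A_r$ is the multi-index with $\alpha_d = 1$ and all other entries zero, and $\tilde{s}^\lambda_\alpha$ are the expansion coefficients of $\tilde{\kappa}_{(r)}$. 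Because the $s^\lambda_\alpha$ do not depend on $n$, each such relation must hold as a polynomial identity in $n$ across the stable range; combining this with the observation that $\tilde{\kappa}_{(r)}$ already furnishes a valid solution and with the one-dimensionality of $\mathbb{K}_r$ forces $s^\lambda = \tilde{s}^\lambda$, so $\mathbb{K}_r = \mathbb{C}\cdot\tilde{\kappa}_{(r)}$ as claimed.
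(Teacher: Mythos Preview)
Your setup through the second paragraph is correct and matches the paper: one-dimensionality of $\mathbb{K}_r$, polynomial dependence on $I_1,\ldots,I_r$ with total $B$-degree $\le r$, $n$-independence of the coefficients, and the computation $h^r(\kappa_{(r)})=\tilde{\kappa}_{(r)}$ at $B=\varepsilon I$ via $k+r-n-1=0$.

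The gap is in your final step. You try to pin down $s^\lambda_\alpha$ by specializing to $B=\varepsilon I$ and then letting $n$ vary. But the locus $\{(I_1,\ldots,I_r)=\bigl(\tbinom{n}{1}\varepsilon,\ldots,\tbinom{n}{r}\varepsilon^r\bigr)\}$ is only two-dimensional (parametrized by $n$ and $\varepsilon$), so for $r\ge 3$ it is far from Zariski-dense. Concretely, at weighted degree $d$ your linear system has one unknown $s^\lambda_\alpha$ for each $\alpha$ with $\sum i\alpha_i=d$, while the ``equations'' are the coefficients of the degree-$d$ polynomial in $n$; for $d=4$ there are five monomials $I_1^4,\,I_1^2I_2,\,I_2^2,\,I_1I_3,\,I_4$ but their specializations $n^4,\,n^2\tbinom{n}{2},\,\tbinom{n}{2}^2,\,n\tbinom{n}{3},\,\tbinom{n}{4}$ all lie in the four-dimensional space of degree-$\le 4$ polynomials in $n$ with vanishing constant term. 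Indeed one checks that $3I_1I_3-4I_2^2+I_1^2I_2$ vanishes identically on every curve $I_i=\tbinom{n}{i}\varepsilon^i$, and this polynomial has weighted degree $4\le r$ once $r\ge 4$. So the data you collect cannot distinguish $s^\lambda$ from $\tilde{s}^\lambda+c\,(3I_1I_3-4I_2^2+I_1^2I_2)$, and neither ``$\tilde{\kappa}_{(r)}$ is a solution of the specialization'' nor ``$\dim\mathbb{K}_r=1$ for each fixed $B$'' closes this ambiguity: you have not shown that $\tilde{\kappa}_{(r)}$ lies in $\mathbb{K}_r$ for any $B$ outside the $\varepsilon I$ locus.

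The paper avoids this by not relying on $n$-variation alone. After computing at $B=\varepsilon_1 I$, it alternates the dimension-raising embedding $B\mapsto\mathrm{diag}(B,0)$ (which preserves the $I_i$ and, by Lemma~\ref{lem:iso} and \eqref{CDq0}, the kernel) with the $\varepsilon$-transformation of Lemma~\ref{general_h_iso}, and verifies by a direct calculation that $h(\tilde{\kappa}_{(r),B})=\tilde{\kappa}_{(r),B''}$ at each step. Iterating $r$ times produces diagonal matrices $\mathrm{diag}(0,\ldots,0,\xi_r,\ldots,\xi_1)$ with $r$ essentially free parameters, so $(I_1,\ldots,I_r)$ sweeps out an open set in $\mathbb{C}^r$, and the identity of polynomials then follows. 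To repair your argument you need precisely this extra ingredient: either the direct check that the $\varepsilon$-transformation preserves the form $\tilde{\kappa}_{(r)}$, or some other way to reach a Zariski-dense set of invariants.
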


\begin{proof}
Theorem~\ref{thm:B=epsilon I case} and diagram~(\ref{eq:trivb-diagram}) tell us that, on $G(k,n)$,
when $B = \varepsilon_1 I, \varepsilon_1 \neq - \frac{1}{k} $, the kernel is spanned by
\[
\tilde{\kappa}_{(r),B} =
\sum_{i=0}^{r} I_i(B) \ \kappa_{(r-i)} \cdot \kappa_{(1)}^i.
\]
Thus, by Lemma \ref{lem:iso} and (\ref{CDq0}), we see, when $B'=\begin{pmatrix}
B & 0\\ 0 & 0
\end{pmatrix}$, the kernel is spanned by
\[
\tilde{\kappa}_{(r),B} =
\sum_{i=0}^{r} I_i(B) \ \kappa_{(r-i)} \cdot \kappa_{(1)}^i\ = \sum_{i=0}^{r} I_i(B') \ \kappa_{(r-i)} \cdot \kappa_{(1)}^i\
\]
on $G(k+1,n+1)$.
Lemma \ref{general_h_iso} shows that $h(\tilde{\kappa}_{(r),B})$ is in the kernel for
\[
B''=\begin{pmatrix}
(1+(k+1)\varepsilon_2)B + \varepsilon_2 I & 0\\ 0 & \varepsilon_2
\end{pmatrix}
\]
on $G(k+1,n+1)$.
Note that
\begin{equation}
\begin{split}
\det (t I + B'') &= (t+\varepsilon_2) \det ((t+\varepsilon_2) I + (1+(k+1)\varepsilon_2)B),\\
&= \sum_{i=0}^n (t+\varepsilon_2)^{n+1-i} I_i(B) (1+(k+1)\varepsilon_2)^i,\\
&= \sum_{m=0}^{n+1} t^{n+1-m} \sum_{i=0}^{n} \varepsilon_2^{m-i} {n+1-i \choose n+1-m} I_i(B) (1+(k+1)\varepsilon_2)^i, \\
\end{split}
\end{equation}
hence
\[
I_m(B'') = \sum_{i=0}^{n} \varepsilon_2^{m-i} {n+1-i \choose n+1-m} I_i(B) (1+(k+1)\varepsilon_2)^i.
\]
From Theorem \ref{thm:B=epsilon I case}, one can compute
\[
\begin{split}
h(\tilde{\kappa}_{(r),B}) &= \sum_{i=0}^r I_i(B) h(\kappa_{(1)})^i h(\kappa_{(r-i)}),\\
&= \sum_{i=0}^r I_i(B) (1+(k+1)\varepsilon_2)^i \kappa_{(1)}^i \sum_{j=0}^{r-i} \varepsilon_2^j {n+1-i \choose j} \kappa_{(r-i-j)} \kappa_{(1)}^j,\\
&= \sum_{m=0}^{r} \kappa_{(1)}^m \kappa_{(r-m)} \sum_{i+j=m} \varepsilon_2^j I_i(B) (1+(k+1) \varepsilon_2)^i {n+1-i \choose j}, \\
&= \sum_{m=0}^{r} \kappa_{(1)}^m \kappa_{(r-m)} I_m(B''), \\
\end{split}
\]
which shows the kernel for $B''$ has the same form as $B$, namely \eqref{kernel_n-k+1}.
The same method can be applied to $B''$ in place of $B$, and induction shows
the kernel contains $\tilde{\kappa}_{(r),\tilde{B}} =
\sum_{i=0}^{r} I_i(\tilde{B}) \ \kappa_{(r-i)} \cdot \kappa_{(1)}^i$ for
$\tilde{B} = ET^l (B), l=0,1,2...,$ where $ET^0(B)=B$ and $ET^l(B)=ET(ET^{l-1}(B))$.
In particular, if we take $\tilde{\varepsilon}_i=1+(k+i-1)\varepsilon_i$, then
\[
\begin{split}
ET^0(B) =&~ \varepsilon_1 I,\\
ET^1(B) =&~ {\text{diag}} ((\tilde{\varepsilon}_2 \varepsilon_1 + \varepsilon_2)I, \varepsilon_2),\\
ET^2(B) =&~ {\text{diag}} ((\tilde{\varepsilon}_3 \tilde{\varepsilon}_2 \varepsilon_1 + \tilde{\varepsilon}_3 \varepsilon_2 + \varepsilon_3)I,
\tilde{\varepsilon}_3 \varepsilon_2 + \varepsilon_3, \varepsilon_3),\\
&~\vdots\\
ET^r(B) =&~ {\text{diag}} ((\varepsilon_{r+1}+\tilde{\varepsilon}_{r+1} \varepsilon_r +\cdots+ \tilde{\varepsilon}_{r+1}\cdots\tilde{\varepsilon}_2 \varepsilon_1)I,\\
&~\varepsilon_{r+1}+\tilde{\varepsilon}_{r+1} \varepsilon_r +\cdots+ \tilde{\varepsilon}_{r+1}\cdots\tilde{\varepsilon}_3 \varepsilon_2,
\cdots, \varepsilon_{r+1}),
\end{split}
\]
where the parameters $\varepsilon_1, \cdots, \varepsilon_{r+1}$ are such that all the
matrices above are not on the degenerate locus.
For any $\xi_1, \cdots, \xi_r \in \mathbb{C}$ such that $0< \left| \xi_i \right| \ll 1$ and $\xi_i \neq \xi_j$ for $i \neq j$,
there is a unique set of solutions to

\begin{equation}
\left\{ \begin{aligned}
         &\varepsilon_{r+1} = \xi_1, \\
         &\varepsilon_{r+1} + \tilde{\varepsilon}_{r+1} \varepsilon_r = \xi_2, \\
         &~~~\cdots  \\
         &\varepsilon_{r+1}+\tilde{\varepsilon}_{r+1} \varepsilon_r +\cdots+ \tilde{\varepsilon}_{r+1}\cdots\tilde{\varepsilon}_3 \varepsilon_2 = \xi_r,\\
         &\varepsilon_{r+1}+\tilde{\varepsilon}_{r+1} \varepsilon_r +\cdots+ \tilde{\varepsilon}_{r+1}\cdots\tilde{\varepsilon}_2 \varepsilon_1 = 0.
                          \end{aligned} \right.
                          \end{equation}
This implies the expression \eqref{kernel_n-k+1} is in the kernel for any deformation given by
\[
B = \text{diag}(0,0,\cdots, 0, \xi_r, \cdots, \xi_1)
\]
with $0< \left| \xi_i \right| \ll 1$ and $\xi_i \neq \xi_j$ for $i \neq j$.
Since this means the expression of the kernel is given by \eqref{kernel_n-k+1} for
all $I_1,I_2,\cdots,I_r$ in a small open set in $\mathbb{C}^r$, we see the kernel at order $n-k+1$
is generated by
\[
\tilde{\kappa}_{(r)} = \sum_{\lambda, \alpha} s^{\lambda}_{\alpha} I^\alpha \kappa_{\lambda} = \sum_{i=0}^{r} I_i \ \kappa_{(r-i)} \cdot \kappa_{(1)}^i
\]
for a generic deformation.
\end{proof}

Here are some examples.
For $G(n-1,n), r=2$, the result is
\begin{equation}
\kappa=(1+I_1+I_2) \kappa_{(2)}+(I_1+ I_2) \kappa_{(1,1)}.
\end{equation}

For $G(n-2,n), r=3$, the result is
\begin{equation}
\kappa=(1+I_1+I_2+I_3) \kappa_{(3)}+(I_1+2 I_2+2 I_3) \kappa_{(2,1)}
+(I_2+I_3) \kappa_{(1,1,1)}.
\end{equation}

\subsection{General $r$}
Now let's determine elements in the kernel of the connecting map for higher orders.
Let $V=V_1\oplus L$ be an $n$ dimensional vector space. Consider the inclusion of Grassmannians $X=G(k,V_1)\hookrightarrow Y = G(k,V)$ induced by $V_1\hookrightarrow V$, with $[S]\mapsto [S]$, where $S\subset V_1$ is a subspace. Note that in this case we have $\mc{V}|_X = \mc{V}_1\oplus \mc{L}$, $\mc{S}|_X=\mc{S}$, and similarly for their duals.

\begin{lem}\label{lem:CD_r-1}
Let $B=\begin{pmatrix}
B_1 & *\\ 0 & \varepsilon
\end{pmatrix}$.
Then there is a commutative diagram:
{
\newcommand{\OA}{\mc{V^*\otimes S}}
\newcommand{\OB}{\mc{S^*\otimes S}}
\newcommand{\A}{\mc{V^*\otimes S}|_X}
\newcommand{\B}{\mc{S^*\otimes S}|_X}
\newcommand{\nA}{\mc{V}_1^*\mc{\otimes S}}
\newcommand{\nB}{\mc{S}^*\otimes \mc{S}}
\begin{equation}
\xymatrix{
\OA     \ar[r]^{f_B }   \ar[d]          &       \OB \ar[d]
\\
\A      \ar[r]^{f_B }   \ar[d]^{\pi}            &       \B  \ar[d]^{\pi}
\\
\nA     \ar[r]^{f_{B_1}}                &       \nB
,}
\end{equation}
}
given by the natural projections as vertical maps.
\end{lem}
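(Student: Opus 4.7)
The top square is routine: the natural restriction of bundles along $X \hookrightarrow Y$ is functorial in morphisms, and $f_B$ is defined globally on $Y$, so its restriction to $X$ is $f_B|_X$. This gives commutativity of the top square immediately.

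The content of the lemma is the commutativity of the bottom square, which I would verify in local Stiefel coordinates by imitating the calculation in Lemma \ref{CD_gk-1n-1}. Choose the standard basis $\{e_1,\ldots,e_{n-1}\}$ for $V_1$ and $e_n$ for $L$, so that on $X$ every $[\mc{S}]$ is generated by vectors $v_1,\ldots,v_k\in V_1$, giving $v^n_b=0$ for all $b=1,\ldots,k$. Under the natural projections, $\pi$ sends a section $c^a_i$ of $\mc{V}^*\otimes\mc{S}|_X$ to its restriction to $i=1,\ldots,n-1$, while the corresponding projection on $\mc{S}^*\otimes\mc{S}|_X$ is the identity (because $\mc{S}|_X=\mc{S}$, viewed as a bundle on $X$).

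The key structural input is the block form of $B$: the lower-left block being zero gives $B^n_j=0$ for $j=1,\ldots,n-1$, and the lower-right block being a scalar $\varepsilon$ means $B^n_n=\varepsilon$. Writing both compositions explicitly,
\begin{equation}
\pi\circ f_B(c^a_i)-f_{B_1}\circ \pi(c^a_i)
= c^a_n v^n_b + \sum_{i=1}^n c^d_i B^i_n v^n_d\,\delta^a_b + \sum_{i=1}^{n}\sum_{j=1}^{n-1} c^d_n B^n_j v^j_d\,\delta^a_b,
\end{equation}
where the three extra terms come, respectively, from the $i=n$ contribution of $c^a_i v^i_b$, the $j=n$ contribution of $c^d_i B^i_j v^j_d$, and the $i=n$ contribution of the remaining $j\leq n-1$ part. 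The first term vanishes because $v^n_b=0$, the second vanishes because $v^n_d=0$, and the third vanishes because $B^n_j=0$ for $j\leq n-1$. Hence $\pi\circ f_B = f_{B_1}\circ\pi$.

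The main (and only) subtlety is to track indices carefully so as to isolate the terms where the block structure of $B$ is used; once this is in place the computation is short. The result does not depend on the upper-right block $*$ of $B$ nor on $\varepsilon$, since those entries $B^i_n$ (for $i\leq n-1$) and $B^n_n$ only multiply $v^n_d=0$.
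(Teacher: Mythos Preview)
Your proof is correct and follows essentially the same approach as the paper: both argue the top square by functoriality of restriction, set up the standard basis so that $v^n_b=0$ on $X$, and then compute $\pi\circ f_B - f_{B_1}\circ\pi$ explicitly, with the three extra terms vanishing via $v^n_b=0$, $v^n_d=0$, and $B^n_j=0$ for $j\leq n-1$. One minor slip: your third term carries an extraneous $\sum_{i=1}^n$ whose summand does not depend on $i$; it should read simply $\sum_{j=1}^{n-1} c^d_n B^n_j v^j_d\,\delta^a_b$.
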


\begin{proof}This is entirely analogous to Lemma \ref{CD_gk-1n-1}.
The first square is obviously commutative. For the second one, we take the standard basis for $V=V_1\oplus L$, with $V_1=\langle e_1,...,e_{n-1}\rangle$ and $L=\langle e_n\rangle$. On $X$, each $S$ is generated by $v_1,...,v_k$ with $v^n_b = 0$ for $b\leq k$.

As before, we know the map $f_B$ and $f_{B_1}$ explicitly. \begin{equation}
f_B: c^a_i \mapsto c^a_i v^i_b +c^d_i B^i_j v^j_d \delta^a_b,
\end{equation}
and similarly for $f_{B_1}$ with $a,i$ indices runs to $k,n-1$ instead of $k,n$. Hence
\begin{equation}
\begin{array}{ll}
\pi \circ f_B - f_{B_1}\circ\pi
&=c^a_n v^n_b + \sum_{j=1}^{n-1} c^d_n B^n_j v^j_d \delta^a_b + \sum_{i=1}^{n}c^d_i B^i_n v^n_d \delta^a_b ,
\\
&=0.
\end{array}
\end{equation}
\end{proof}
\begin{thm}\label{general r kernel}
On $G(k,n)$, with $B$ outside the degenerate locus, we have
$\tilde{\kappa}_{(r)} \in \mathbb{K}_r$ for any $r\geq n-k+1$.
\end{thm}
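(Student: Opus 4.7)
The plan is to reduce the statement on $G(k,n)$ with $r \geq n-k+1$ to the base case of Theorem \ref{n-k+1_generator} applied to the larger Grassmannian $Y = G(k, n')$, where $n' = r + k - 1 \geq n$, by iterated application of Lemma \ref{lem:CD_r-1}. The key point is that $r$ is exactly the first nontrivial order on $Y$, so Theorem \ref{n-k+1_generator} applies there directly.

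Set $c = n' - n \geq 0$ and decompose $V = V_1 \oplus L$ with $\dim V_1 = n$, $\dim L = c$. Consider the embedding $X = G(k, V_1) \hookrightarrow Y = G(k, V)$, $[S] \mapsto [S]$. Given a non-degenerate $B_1 \in \mathrm{End}(V_1)$ on $X$, extend it to $B = \mathrm{diag}(B_1, 0_L) \in \mathrm{End}(V)$ on $Y$. Since $\det(tI_{n'} + B) = t^c \det(tI_n + B_1)$, we have $I_i(B) = I_i(B_1)$ for $0 \leq i \leq n$ and $I_i(B) = 0$ for $n < i \leq n'$, so $\tilde{\kappa}_{(r)}(B)$ and $\tilde{\kappa}_{(r)}(B_1)$ are given by the same formal expression in the standard generators $\kappa_\lambda$. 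The eigenvalues of $B$ are those of $B_1$ together with $c$ zeros, so for $B_1$ in a Zariski open subset of the non-degenerate locus of $X$, the matrix $B$ avoids the degenerate locus of $Y$, and Theorem \ref{n-k+1_generator} yields $\tilde{\kappa}_{(r)}(B) \in \mathbb{K}_r(B)$ on $Y$, i.e. $\Delta_{B,Y}(\tilde{\kappa}_{(r)}(B)) = 0$.

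We next pull this vanishing back to $X$ in two naturality steps. First, the Koszul resolution of $\wedge^r \mc{E}^*_B$ on $Y$ consists of locally free sheaves, so its restriction to $X$ is an exact resolution of $\wedge^r(\mc{E}^*_B|_X)$, and since Cech cocycles restrict naturally, the connecting map commutes with restriction, giving $\Delta_{B|_X, X}(\tilde{\kappa}_{(r)}(B_1)) = 0$ in $H^r(\wedge^r\mc{E}^*_B|_X)_X$. Second, on $X$ the iterated Lemma \ref{lem:CD_r-1} provides a morphism of the two short exact sequences defining $\mc{E}^*_B|_X$ and $\mc{E}^*_{B_1}$, which is the identity on $\mc{S^*\otimes S}$, and hence induces a morphism of the corresponding Koszul resolutions together with a commutative square
\begin{equation*}
\xymatrix{
H^0(\mathrm{Sym}^r(\mc{S^*\otimes S}))_X \ar[rr]^{\Delta_{B|_X, X}} \ar@{=}[d] & & H^r(\wedge^r\mc{E}^*_B|_X)_X \ar[d] \\
H^0(\mathrm{Sym}^r(\mc{S^*\otimes S}))_X \ar[rr]^{\Delta_{B_1, X}} & & H^r(\wedge^r\mc{E}^*_{B_1})_X.
}
\end{equation*}
Commutativity forces $\Delta_{B_1, X}(\tilde{\kappa}_{(r)}(B_1)) = 0$, so $\tilde{\kappa}_{(r)}(B_1) \in \mathbb{K}_r(B_1)$ for $B_1$ in this open set.

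Finally, since $\Delta_{B_1}(\tilde{\kappa}_{(r)}(B_1)) = 0$ is algebraic in the entries of $B_1$, it defines a closed subset of the non-degenerate locus; containing a dense open subset, it is the entire non-degenerate locus, completing the proof. The main obstacle will be rigorously justifying the two naturality statements for the connecting map $\Delta$—compatibility with restriction from $Y$ to $X$, and with the snake-lemma-induced morphism $\mc{E}^*_B|_X \to \mc{E}^*_{B_1}$—each of which is a standard functoriality of the long exact sequence associated to a short exact sequence of locally free sheaves, but whose Cech-level bookkeeping through the Koszul complex requires some care.
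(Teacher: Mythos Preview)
Your approach is built on the same key tool as the paper's proof, namely the comparison diagram of Lemma~\ref{lem:CD_r-1}, which yields an inclusion $\mathbb{K}_r(k,n',B)\hookrightarrow\mathbb{K}_r(k,n,B_1)$ whenever $B$ restricts to $B_1$ in the required block form. The difference lies in how the inclusion is deployed. The paper proceeds by induction on $r-(n-k+1)$: given non-degenerate $B_1$ on $G(k,n)$, it extends to $\tilde B=\mathrm{diag}(B_1,\varepsilon)$ on $G(k,n+1)$ with $\varepsilon$ chosen so that $\tilde B$ is non-degenerate, applies the inductive hypothesis there to get $\tilde\kappa_{(r)}(\tilde B)\in\mathbb{K}_r$, pushes this down to $G(k,n)$, and then subtracts the correction term $\varepsilon\,\kappa_{(1)}\tilde\kappa_{(r-1)}(B_1)$, which lies in $\mathbb{K}_r$ by the inductive hypothesis at level $r-1$. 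This delivers the conclusion directly for \emph{every} non-degenerate $B_1$.

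You instead jump straight to $Y=G(k,r+k-1)$ and kill the correction terms by padding $B_1$ with zeros, which is an appealing simplification. The cost is real, though. For an arbitrary non-degenerate $B_1$ on $G(k,n)$ the matrix $B=\mathrm{diag}(B_1,0)$ need \emph{not} be non-degenerate on $Y$: the criterion of Theorem~\ref{thm:general_degen_loci} now involves sums of $j$ eigenvalues of $B_1$ for every $j\le k$, not just $j=k$. You correctly retreat to a Zariski-open set of $B_1$'s, but then your closing step---extending to all non-degenerate $B_1$ by ``closedness''---does not go through as written. The vanishing $\Delta_{B_1}(\tilde\kappa_{(r)}(B_1))=0$ takes place in $H^r(\wedge^r\mc{E}^*_{B_1})$, a fibre of the coherent sheaf $R^r\pi_*\wedge^r\mc{E}^*$ on the parameter space; the vanishing locus of a section of a merely coherent sheaf is constructible but not in general closed (it is closed only where the sheaf is locally free, i.e.\ on the open locus where $h^r$ is constant). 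So your argument establishes the claim on the generic locus, not on the full non-degenerate locus asserted in Theorem~\ref{general r kernel}. The paper's inductive scheme, with its free parameter $\varepsilon$ and the correction-term subtraction, is precisely what circumvents this specialization-to-the-boundary issue.
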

\begin{proof}
To specify the dependence of the underlying variety $G(k,n)$ and that of the map $f_B$, we write $\mathbb{K}_r$ as $\mathbb{K}_r(k,n,B)$. Namely, $\mathbb{K}_r(k,n,B)$ is the kernel of
$$H^0(G(k,n),\Sym^r(\mc{S^*\otimes S}))\to H^r(G(k,n),\wedge^r\mc{E}^*)$$ for $\mc{E}^*$ defined as the kernel of $f_B$ in 
\begin{equation}\label{similarityCD}
\xymatrix{
\mc{V}^*\otimes \mc{S} \ar[r]^{f_B}\ar[d]^{\cong}_{g} &\mc{S}^*\otimes \mc{S}\ar[d]^{g}_{\cong}\\
\mc{V}^*\otimes \mc{S} \ar[r]^{f_{gBg^{-1}}} &\mc{S}^*\otimes \mc{S}
.}
\end{equation}

We compare $\mathbb{K}_r(k,n,B)$ with $\mathbb{K}_r(k,n-1,B_1)$. The idea here is similar to that of Theorem \ref{thm:kernel_independent_of_n}.
As a corollary of Lemma \ref{lem:CD_r-1}, we have
{
\newcommand{\C}{\mathbb{K}_r(k,n,B)}
\newcommand{\D}{\mathbb{K}_r(k,n-1,B_1)}
\newcommand{\A}{H^0(\Sym^r(\mc{S^*\otimes S}))}
\newcommand{\B}{H^r(\wedge^r\mc{E}^*)}
\newcommand{\nA}{H^0(\Sym^r(\mc{S^*\otimes S}))}
\newcommand{\nB}{H^r(\wedge^r\mc{E}^*_1)}
\begin{equation}\label{imbedding}
\xymatrix{
0 \ar[r] &\C \ar[r]\ar@{^{(}->}[d] &\A      \ar[r]  \ar@{=}[d]          &       \B  \ar[d]
\\
0 \ar[r] &\D \ar[r] &\nA        \ar[r]          &       \nB
.}
\end{equation}
}

Taking $r = n-k+1$, we have that the image of $\tilde{\kappa}_{(r)}
\in \mathbb{K}_r(k,n,B) \subset \mathbb{K}_r(k,n-1,B_1)$ is of the form
\begin{displaymath}
\imath\left( \tilde{\kappa}_{(n-k+1)} \right) =
\tilde{\kappa}_{(n-k+1)} + \varepsilon \tilde{\kappa}_{(n-k)}
\kappa_{(1)}.
\end{displaymath}
We know $\imath\left( \tilde{\kappa}_{(n-k+1)} \right) \in
 \mathbb{K}_r(k,n-1,B_1)$ by \eqref{imbedding}, and from Theorem~\ref{n-k+1_generator}
 we know
the second term $\varepsilon \tilde{\kappa}_{(n-k)}
\kappa_{(1)}$ is also in the same kernel, hence
$\tilde{\kappa}_{(n-k+1)}$ must be in the kernel.
Up to a change of notation, this shows that on $G(k,n)$, we have
$\tilde{\kappa}_{(r)} \in \mathbb{K}_r$ for $r= n-k+2$.
Then the theorem holds in general by induction.
\end{proof}

From Remark \ref{rmk:mult}, we know that $\tilde{\kappa}_{(r)}\cdot \kappa_\mu \in \mathbb{K}_s$, for any $r\geq n-k+1$ and $|\mu|=s-r$. For generic deformations of the tangent bundles, they generate the kernel $\mathbb{K}_s$.

\begin{thm}\label{thm:full_module_description}
For generic deformed tangent bundles, the kernel $\mathbb{K}_s$ is generated by $\tilde{\kappa}_{(r)}\cdot \kappa_\mu$, $r\geq n-k+1$ and $|\mu|=s-r$.
\end{thm}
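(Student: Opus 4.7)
The plan is to prove the theorem by comparing dimensions of two quotients of the Schur polynomial ring $R := H^0(\Sym^*(\mc{S^* \otimes S}))$. Let $J_B \subseteq R$ denote the graded ideal generated by $\{\tilde{\kappa}_{(r)} : r \geq n-k+1\}$, and let $\mathbb{K}_B = \bigoplus_s \mathbb{K}_s \subseteq R$ denote the total kernel ideal. By Theorem~\ref{general r kernel} and Remark~\ref{rmk:mult} we have $J_B \subseteq \mathbb{K}_B$, which, combined with Theorem~\ref{quotient ring thm}, yields a canonical surjection of graded rings
\begin{equation*}
R/J_B \twoheadrightarrow R/\mathbb{K}_B \cong H^*_\mc{E}(X).
\end{equation*}
The degree-$s$ statement of the theorem is equivalent to this surjection being an isomorphism for generic $B$.

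The first step is to bound $\dim_\mathbb{C}(R/J_B)$ from above. At $B = 0$ one has $\tilde{\kappa}_{(r)} = \kappa_{(r)}$, so $J_0$ is the classical relation ideal and $R/J_0 \cong H^*(G(k,n))$ has total dimension $\binom{n}{k}$. Since the generators $\tilde{\kappa}_{(r)}$ depend polynomially on $B$ through the similarity invariants $I_i(B)$, the assignment $B \mapsto R/J_B$ is realized as the cokernel of a $B$-dependent map of graded free modules over the coordinate ring of the parameter space. Upper semicontinuity of fiber dimensions of such cokernels then gives, for generic $B$,
\begin{equation*}
\dim_\mathbb{C}(R/J_B) \leq \dim_\mathbb{C}(R/J_0) = \binom{n}{k}.
\end{equation*}

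The second step is the matching lower bound via the polymology itself. For generic $B$ (outside the locus $\mc{B}_{\text{jump}}$ introduced at the start of Section~\ref{sec:cohomology_computation}) we have $H^q(\wedge^p \mc{E}^*) = 0$ for $p \neq q$. From the defining sequence~\eqref{seq:define_E} one reads off $c(\mc{E}) = c(T_X)$, so Hirzebruch--Riemann--Roch gives $\chi(\wedge^r \mc{E}^*) = \chi(\Omega^r)$, hence $\dim H^r(\wedge^r \mc{E}^*) = \dim H^r(\Omega^r)$; summing over $r$ yields $\dim_\mathbb{C} H^*_\mc{E}(X) = \binom{n}{k}$. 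The surjection displayed above then forces $\dim_\mathbb{C}(R/J_B) \geq \binom{n}{k}$, and combined with the upper bound this is an equality. Hence $J_B = \mathbb{K}_B$ for generic $B$, and taking the graded piece in degree $s$ gives the theorem, since $(J_B)_s$ is spanned by products $\tilde{\kappa}_{(r)} \cdot \kappa_\mu$ with $r \geq n-k+1$ and $|\mu| = s-r$ by the Schur polynomial basis of $R_{s-r}$.

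The main obstacle I anticipate is the semicontinuity step: one must realize the family $J_B$ as a coherent sheaf of ideals on the $B$-parameter space (away from the degenerate locus) so that standard upper semicontinuity applies to fiber dimensions of the cokernel $R/J_B$, and one must interpret \emph{generic} in the theorem's statement as excluding the union of the degenerate locus of Section~\ref{sec:degenerate_locus} and $\mc{B}_{\text{jump}}$; on the complement of this union the argument above applies verbatim.
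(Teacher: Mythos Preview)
Your argument is correct and essentially mirrors the paper's: both are dimension counts combining (i) the inclusion $J_B\subseteq\mathbb{K}_B$ from Theorem~\ref{general r kernel}, (ii) a semicontinuity step (you phrase it as upper semicontinuity of $\dim R/J_B$, the paper as ``linear independence is an open condition'' for the spanning set $\{\tilde\kappa_{(r)}\kappa_\mu\}$ --- these are dual formulations), and (iii) the equality $h^s(\wedge^s\mc{E}^*)=h^s(\Omega^s)$ for generic $B$ via invariance of the holomorphic Euler characteristic together with the vanishing $h^q(\wedge^p\mc{E}^*)=0$ for $p\neq q$. The only substantive difference is that the paper verifies the $B=0$ base case $J_0=\mathbb{K}_0$ explicitly via the Giambelli formula, whereas you invoke the standard presentation $R/J_0\cong H^*(G(k,n))$; the logical content is the same.
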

\begin{proof}
Since linear independence is an open condition, it suffices to show that for the tangent bundle ($B=0$), $\mathbb{K}_s$ is generated by ${\kappa}_{r}\cdot \kappa_\mu$, $r\geq n-k+1$ and $|\mu|=s-r$.

Let $\lambda=(\lambda_1,\cdots,\lambda_k)$. Notice that the Giambelli formula for Schur functions applies to $\kappa_\lambda \in H^0(\Sym^r(\mc{S^*\otimes S}))$. In general it says
\begin{equation}
\kappa_\lambda = \det \left(
\begin{array}{cccc}
\kappa_{(\lambda_1)} & \kappa_{(\lambda_1+1)} & \cdots & \kappa_{(\lambda_1+k-1)} \\
\kappa_{(\lambda_2-1)} & \kappa_{(\lambda_2)} & \cdots & \kappa_{(\lambda_2+k-2)} \\
 & \multicolumn{2}{c}{\dotfill}\\
\kappa_{(\lambda_k-k+1)} & \kappa_{(\lambda_k-k+2)} & \cdots & \kappa_{(\lambda_k)}
\end{array} \right).
\end{equation}

Recall that for the tangent bundle, $\mathbb{K}_s$ is generated by $\kappa_\lambda$, with $\lambda=(\lambda_1,...,\lambda_k)$, $|\lambda|=s$ and $\lambda_1\geq n-k+1$.
Each such $\kappa_\lambda$ is of the form $\sum\kappa_{(r)}\cdot \kappa_\mu$
by the Giambelli formula, with $r\geq n-k+1$.

It remains to show that $h^s(\Omega^s) = h^s(\wedge^s {\mathcal
E}^*)$ for a generic deformation. Applying
Lemma~\ref{lemma:tech-app}, we see that the elements in the kernel
correspond to $H^{j-1}(\wedge^j (\mc{V^*\otimes S})\otimes
\Sym^{r-j} (\mc{S^*\otimes S}))$ with $1\leqslant j\leqslant s$,
thus we have
\begin{displaymath}
h^s(\Omega^s) \leq h^s(\wedge^s {\mathcal E}^*).
\end{displaymath}
From semicontinuity, for a generic deformation,
\begin{displaymath}
h^s(\Omega^s) \geq h^s(\wedge^s {\mathcal E}^*),
\end{displaymath}
and hence $h^s(\Omega^s) = h^s(\wedge^s {\mathcal
E}^*)$.  An alternative derivation of this result is as follows.
Note that when $q \neq p$, by semicontinuity,
\begin{displaymath}
h^q(\wedge^p {\mathcal E}^*) \leq h^q(\Omega^p) = 0,
\end{displaymath}
hence
\begin{displaymath}
h^q(\wedge^p {\mathcal E}^*) = h^q(\Omega^p) = 0.
\end{displaymath}
Since the holomorphic Euler characteristic of $\wedge^p
{\mathcal E}^*$ does not change across the flat family, we have that
$h^s(\Omega^s) = h^s(\wedge^s {\mathcal
E}^*)$ for generic deformations.
This completes the proof.
\end{proof}

This is a full description of the graded module structure of the polymology. Moreover, it implies the following description of the polymology ring.
Combining theorems \ref{quotient ring thm}, \ref{general r kernel} and \ref{thm:full_module_description}, we have
\begin{thm}\label{result}
For generic deformed tangent bundles, the polymology ring is
the ring of symmetric polynomials in $k$ indeterminates
modulo the ideal generated by the $\tilde{\kappa}$'s,
which can be given explicitly as
\begin{equation}
{\mathbb C}[\kappa_{(1)}, \kappa_{(2)}, \cdots] / \langle
D_{k+1}, D_{k+2}, \cdots, \tilde{\kappa}_{(n-k+1)},
\tilde{\kappa}_{(n-k+2)}, \cdots
\rangle,
\end{equation}
where
\begin{equation}
D_m = \det\left( \kappa_{(1+j-i)} \right)_{1 \leq i, j \leq m},
\end{equation}
and $\tilde{\kappa}_{r}$ is defined in Definition \ref{kappa_tilde_def}.
\end{thm}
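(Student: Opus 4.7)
The plan is to assemble this theorem directly from three preceding results: Theorem \ref{quotient ring thm}, which presents the polymology as a quotient of $H^0(\Sym^\bullet(\mc{S^*\otimes S}))$; Theorem \ref{general r kernel}, which places $\tilde{\kappa}_{(r)}$ inside this kernel for every $r \ge n-k+1$; and Theorem \ref{thm:full_module_description}, which says these elements span the whole kernel for a generic deformation. What remains is to rewrite the ambient ring $H^0(\Sym^\bullet(\mc{S^*\otimes S}))$ in the polynomial-generators-and-relations form appearing in the theorem.

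First I would identify $H^0(\Sym^\bullet(\mc{S^*\otimes S}))$ with the ring $\Lambda_k = \mathbb{C}[x_1,\dots,x_k]^{S_k}$ of symmetric polynomials in $k$ indeterminates. By \eqref{sym_r2}, $\{\kappa_\lambda\}_{\lambda \in \mathcal{P}(k,\bullet)}$ is an additive basis, and by Remark \ref{rmk:mult} these multiply via the Littlewood--Richardson rule, so $\kappa_\lambda \leftrightarrow s_\lambda$ extends to a ring isomorphism. Next I would present $\Lambda_k$ as a quotient of the polynomial ring on the complete symmetric functions $\kappa_{(r)} \leftrightarrow h_r$. The Jacobi--Trudi identity gives
\begin{equation}
D_m \;=\; \det\bigl(\kappa_{(1+j-i)}\bigr)_{1 \le i,j \le m} \;=\; s_{(1^m)},
\end{equation}
which matches the elementary symmetric polynomial $e_m$ under the above dictionary. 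The classical fact that $\Lambda_k \cong \mathbb{C}[h_1, h_2, \dots] / \langle e_{k+1}, e_{k+2}, \dots\rangle$ (a consequence of $\{e_m\}_{m>k}$ forming a regular sequence, or equivalently of a Hilbert-series comparison) then yields
\begin{equation}
H^0(\Sym^\bullet(\mc{S^*\otimes S})) \;\cong\; \mathbb{C}[\kappa_{(1)}, \kappa_{(2)},\dots] \big/ \langle D_{k+1}, D_{k+2}, \dots\rangle.
\end{equation}

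Finally I would quotient further by the kernel $\mathcal{K} = \bigoplus_s \mathbb{K}_s$ of the ring map produced by Theorem \ref{quotient ring thm}. Let $\mathcal{J} = \langle \tilde{\kappa}_{(n-k+1)}, \tilde{\kappa}_{(n-k+2)}, \dots\rangle$. The containment $\mathcal{J} \subseteq \mathcal{K}$ is Theorem \ref{general r kernel}, while the reverse containment follows because every element of $\mathcal{J}$ is by construction a linear combination of products $\kappa_\mu \cdot \tilde{\kappa}_{(r)}$ and these span $\mathcal{K}$ by Theorem \ref{thm:full_module_description}. Hence $\mathcal{J} = \mathcal{K}$ and substituting gives the stated presentation. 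The only place requiring any care --- what I would call the main obstacle, though it is a light one --- is verifying that the module-theoretic spanning statement of Theorem \ref{thm:full_module_description} upgrades to ideal generation; this is automatic because the $\kappa_\mu$'s linearly span each graded piece of the ambient ring, so no additional check is needed beyond unpacking definitions.
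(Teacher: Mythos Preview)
Your proposal is correct and follows exactly the approach the paper takes: the paper's proof is the single sentence ``Combining theorems \ref{quotient ring thm}, \ref{general r kernel} and \ref{thm:full_module_description}, we have \ldots'', and you have simply spelled out how those three results fit together, including the standard identification of $H^0(\Sym^\bullet(\mc{S^*\otimes S}))$ with $\Lambda_k$ and its presentation $\mathbb{C}[h_1,h_2,\dots]/\langle e_{k+1},e_{k+2},\dots\rangle$ via Jacobi--Trudi. Your observation that the module-level spanning in Theorem~\ref{thm:full_module_description} automatically upgrades to ideal generation (because the $\kappa_\mu$ span each graded piece) is the one bookkeeping point the paper leaves implicit, and you have handled it correctly.
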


\section{Non-generic situations}\label{sec:non-generic}

We now briefly discuss the non-generic situation. As mentioned in
section \ref{sec:quotient_commutativity}, the cohomology jump loci form a
subvariety of the $B$-parameter space, which we denote as
$$\mc{B}_{\text{jump}} = \cup \mc{B}^{p,q}.$$
(Note that the cohomology
$H^\bullet(\wedge^r\mc{E}^*)$ does not jump for $r\leq n-k$.)
On the other hand, the description in
Theorem~\ref{thm:full_module_description} could also break down in
non-generic cases.
Hence, we can define another subvariety of the $B$-parameter space.

Define
\begin{equation}
\tilde{\kappa}_\lambda = \det \left(
\begin{array}{cccc}
\tilde{\kappa}_{(\lambda_1)} & \tilde{\kappa}_{(\lambda_1+1)} & \cdots & \tilde{\kappa}_{(\lambda_1+k-1)} \\
\kappa_{(\lambda_2-1)} & \kappa_{(\lambda_2)} & \cdots & \kappa_{(\lambda_2+k-2)} \\
 & \multicolumn{2}{c}{\dotfill}\\
\kappa_{(\lambda_k-k+1)} & \kappa_{(\lambda_k-k+2)} & \cdots & \kappa_{(\lambda_k)}
\end{array} \right).
\end{equation}

Let $\mathscr{V}_m$ be the locus in the $B$-parameter space where
$\{\tilde{\kappa}_\lambda: |\lambda|=m, \lambda_1 \geq n-k\}$ is a linearly
dependent set. Both $\mathscr{V}_m$ and $\mc{B}_{\text{jump}}$ are
codimension at least one.  It is unclear how
$\mathscr{V}_m$ and $\cup_{p+q=2m}\mc{B}^{p,q}$ are related.
Detailed examples are given in \cite{Guo2017};
however, we leave a precise
understanding of the relationship to future work.

\section{Conjectures on quantum corrections}\label{sec:qsc}

In the companion paper
\cite{Guo2017}, physics methods were used to extract
both the classical and quantum sheaf cohomology rings for Grassmannians
with deformations of the tangent bundle.
Briefly, it was argued there that the quantum sheaf cohomology ring
could be written as
\begin{equation}\label{conjecture}
\begin{array}{l}
{\mathbb C}\left[\kappa_{(1)}, \kappa_{(2)}, \cdots  \right] / \left\langle
D_{k+1}, D_{k+2}, \cdots, \tilde{\kappa}_{(n-k+1)}, \cdots,
\tilde{\kappa}_{(n-1)},
\right. \\
\hspace*{1.5in} \left. \tilde{\kappa}_{(n)}+q,
\tilde{\kappa}_{(n+1)} + q \kappa_{(1)},
\tilde{\kappa}_{(n+2)} + q \kappa_{(2)}, \cdots
 \right\rangle,
\end{array}
\end{equation}
and it was also shown how this reduces classically to both the classical
sheaf cohomology ring above in the special case that $q \rightarrow 0$,
and to the ordinary quantum cohomology ring in the special case that
$\mc{E}=T_X$.

We have not yet demonstrated the result above for QSC mathematically, hence
we state the physics result above as a conjecture, left for future
work.

\section{Conclusions}\label{sec:conclusion}

In this paper we have proven results on the classical sheaf cohomology
ring (polymology) of Grassmannians with deformations of the tangent
bundle (Theorem \ref{result}), the first step towards mathematically
proving the form of the quantum sheaf
cohomology (QSC) ring \eqref{conjecture} arrived at via physics methods in
the companion paper \cite{Guo2017}.

\section{Acknowledgements}

We would like to thank R.~Donagi, S.~Katz, I.~Melnikov, and L.~Mihalcea
for useful discussions.  Z.L. was partially supported by
EPSRC grant EP/J010790/1.  E.S. was partially supported by
NSF grant PHY-1417410.

\appendix

\section{A technical result}
\label{app:tech}

In this appendix we will establish a technical result which will
be applied in the main text.

In general, given an exact sequence of holomorphic vector bundles over a complex manifold~$X$~as follows,
\begin{equation}
0 \to E_0 \stackrel{i_0}{\to} E_1 \stackrel{f_1}{\to} E_2 \stackrel{f_2}{\to} \cdots \stackrel{f_{n-1}}{\to}
E_n \stackrel{f_n}{\to} E_{n+1} \to 0
\end{equation}
we can decompose it into a series
of short exact sequences
\begin{equation} \label{multises}
\begin{split}
 0 \to E_0 \stackrel{i_0}{\to} &E_1 \stackrel{f_1}{\to} S_1 \to 0 \\
 0 \to S_1 \stackrel{i_1}{\to} &E_2 \stackrel{f_2}{\to} S_2 \to 0 \\
 &\vdots \\
 0 \to S_{n-2} \stackrel{i_{n-2}}{\to} &E_{n-1} \stackrel{f_{n-1}}{\to} S_{n-1} \to 0 \\
 0 \to S_{n-1} \stackrel{i_{n-1}}{\to} &E_n \stackrel{f_n}{\to} E_{n+1} \to 0.
 \end{split}
 \end{equation}
We have a long exact sequence of Cech cohomology associated with
each short exact sequence. Let's denote by~$C^q(X,E)$~the group of
$q$-cochains of $E$ corresponding to some open cover of $X$, also by
$d$ the coboundary map. Given~$\alpha \in H^0(X, E_{n+1})$, and its
representative~$\sigma_0 \in C^0(X,E_{n+1})$~with~$d \sigma_0 = 0$,
there exists a~$\tau_0 \in C^0(X,E_n)$, such
that~$f_n(\tau_0)=\sigma_0$. In turn, we have a~$\sigma_1 \in
C^0(X,S_{n-1})$, such that~$i_{n-1}(\sigma_1)=d \tau_0$~with $d
\sigma_1 = 0$. The connecting map~$\phi_0:H^0(X,E_{n+1}) \to
H^1(X,S_{n-1})$~is defined by $\phi_0(\alpha) = \phi_0([\sigma_0]) =
[\sigma_1]$. Similarly, for
\[ 0 \to S_{m} \stackrel{i_{m}}{\to} E_{m+1} \stackrel{f_{m+1}}{\to} S_{m+1} \to 0 \]
with~$m = 1,2,\cdots,n-2$, we
have~$\phi_{n-m-1}:H^{n-m-1}(X,S_{m+1}) \to H^{n-m}(X,S_m)$~defined
by $\phi_{n-m-1}([\sigma_{n-m-1}]) = [\sigma_{n-m}]$, where~$d
\sigma_{n-m-1} = d \sigma_{n-m} = 0$, $f_{m+1}(\tau_{n-m-1}) =
\sigma_{n-m-1}, i_m(\sigma_{n-m}) = d \tau_{n-m-1}$. For
\[ 0 \to E_0 \stackrel{i_0}{\to} E_1 \stackrel{f_1}{\to} S_1 \to 0,\]
$\phi_{n-1}:H^{n-1}(X,S_1) \to H^{n}(X,E_0)$~ is defined
by~$\phi([\sigma_{n-1}]) = [\sigma_n]$, where~$d \sigma_{n-1} = d
\sigma_{n} = 0$ and~$f_{1}(\tau_{n-1}) = \sigma_{n-1},
i_0(\sigma_{n}) = d \tau_{n-1}$. The map~$\delta_n : H^0(X,E_{n+1})
\to H^n(X,E_0)$~is defined to be~$\phi_{n-1}\circ \phi_{n-2} \circ
\cdots \circ \phi_1 \circ \phi_0$. We can prove the following

\begin{lem}    \label{lemma:tech-app}
Given a class~$\alpha$~in~$H^0(X,E_{n+1})$,~$\delta_n(\alpha) =
0$~if and only if, for any representative of~$\alpha$,~denoted by
$\sigma_0$, there exists an integer~$s$, $0\leqslant s \leqslant
n-1$, and a sequence~$\sigma_0, \sigma_1, \cdots, \sigma_s$ defined
above, such that~$\sigma_s$~admits a closed inverse
in~$C^s(X,E_{n-s})$~under~$f_{n-s}$.
\end{lem}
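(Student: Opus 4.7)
The plan is to work directly at the \v{C}ech cocycle level and exploit the ambiguity inherent in choosing each lift $\tau_j$. The key observation is that replacing $\tau_j$ by $\tau_j - i_{n-j-1}(\eta)$ for any $\eta \in C^j(X,S_{n-j-1})$ leaves $f_{n-j}(\tau_j)=\sigma_j$ unchanged (since $f_{n-j}\circ i_{n-j-1}=0$ by exactness), while it modifies $d\tau_j$ by $-i_{n-j-1}(d\eta)$, and hence replaces $\sigma_{j+1}$ by $\sigma_{j+1}-d\eta$. This confirms on the one hand that the class $[\sigma_{j+1}]$ is intrinsic, and on the other hand gives us precise control over $\sigma_{j+1}$ modulo coboundaries by choosing $\tau_j$ appropriately.

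For the backward implication, I would simply take the given closed lift $\tau \in C^s(X,E_{n-s})$ as the $\tau_s$ in the chain. Then the defining relation $i_{n-s-1}(\sigma_{s+1}) = d\tau = 0$, combined with the injectivity of $i_{n-s-1}$, forces $\sigma_{s+1}=0$. An obvious induction continues $\sigma_j = 0$ for all $j \geq s+1$, so in particular $\sigma_n = 0$ and therefore $\delta_n(\alpha) = [\sigma_n] = 0$.

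For the forward implication, assume $\delta_n(\alpha)=0$ and fix a representative $\sigma_0$. Build any compatible sequence $\tau_0,\sigma_1,\tau_1,\ldots,\tau_{n-1},\sigma_n$ by the standard recipe. Since $[\sigma_n]=0$ in $H^n(X,E_0)$, we may write $\sigma_n = d\eta$ for some $\eta \in C^{n-1}(X,E_0)$. I would then define $\tau'_{n-1} := \tau_{n-1} - i_0(\eta) \in C^{n-1}(X,E_1)$; using $f_1 \circ i_0 = 0$ one checks $f_1(\tau'_{n-1}) = \sigma_{n-1}$, and $d\tau'_{n-1} = d\tau_{n-1} - i_0(d\eta) = i_0(\sigma_n) - i_0(\sigma_n) = 0$. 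Thus $\tau'_{n-1}$ is a closed inverse of $\sigma_{n-1}$ under $f_1$, so the choice $s = n-1$ always works. I expect no serious obstacle: the lemma is essentially an unpacking of the connecting-homomorphism construction, and the entire argument is bookkeeping about cocycles, coboundaries, and the exactness relations $f\circ i = 0$.
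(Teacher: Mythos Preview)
Your proof is correct and follows essentially the same strategy as the paper's. The backward direction is identical. For the forward direction, the paper argues abstractly via the long exact sequence that $[\sigma_s]\in\ker\phi_s=\mathrm{Im}\,f_{n-s}$ and then produces a closed lift, whereas you fix $s=n-1$ from the outset and construct the closed lift explicitly as $\tau'_{n-1}=\tau_{n-1}-i_0(\eta)$; this is precisely the cocycle-level unpacking of the same exactness statement, so the two arguments differ only in presentation.
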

\begin{proof}
If~$\delta_n(\alpha)=0$~and~$\alpha=[\sigma_0]$, then~$\phi_{n-1}\circ \phi_{n-2} \circ
\cdots \circ \phi_1 \circ \phi_0 ([\sigma_0])=0$~and there is an integer~$s$,$0\leqslant s \leqslant n-1$,
such that~$\phi([\sigma_s])=[\sigma_{s+1}]=0$. From the short exact sequence~$0 \to S_{n-s-1} \stackrel{i_{n-s-1}}{\to}
E_{n-s} \stackrel{f_{n-s}}{\to} S_{n-s} \to 0$, we have the long exact sequence
\[
\cdots \to H^s(X,E_{n-s})\stackrel{f_{n-s}}{\to} H^s(X,S_{n-s})\stackrel{\phi_s}{\to}
H^{s+1}(X,S_{n-s-1}) \to \cdots
\]
thus~$[\sigma_s]\in \ker \phi_s = \mathrm{Im} f_{n-s}$, then there
exists~$[\omega]\in H^s(X,E_{n-s})$, such
that~$f_{n-s}([\omega])=[\sigma_s]$, with~$\omega \in
C^s(X,E_{n-s}), d \omega = 0$. This implies~$\sigma_s -
f_{n-s}(\omega)$~is exact, and there
exists~$\eta$~in~$C^{s-1}(X,S_{n-s})$~such that $\sigma_s =
f_{n-s}(\omega) + d \eta$. Since~$f_{n-s}$~is surjective (in the
short exact sequence), we can find $\tilde{\eta}\in
C^{s-1}(X,E_{n-s})$, such that~$f_{n-s}(\tilde{\eta})=\eta$.
Then~$\sigma = f_{n-s}(\omega+ d \tilde{\eta})$, i.e. $\sigma_s$~has
a closed inverse under~$f_{n-s}$.\par Conversely, if for
some~$0\leqslant s \leqslant n-1$, $\sigma_s$~has a closed inverse,
say~$f_{n-s}(\omega)=\sigma_s$, with~$\omega \in C^s(X,E_{n-s}), d
\omega=0$. We can take~$\tau_s=\omega$~in our definition of the
connecting map, then
\[
i_{n-s-1}(\sigma_{s+1}) = d \tau_s = d \omega = 0.
\]
But~$i_{n-s-1}$~is injective, which implies~$\sigma_{s+1}=0$, then~$\phi_s([\sigma_s])=
[\sigma_{s+1}]=0$, and~$\delta_n([\sigma_0])=\phi_{n-1}\circ \phi_{n-2} \circ
\cdots \circ \phi_s ([\sigma_s]) = 0$.
\end{proof}

\bibliography{matha2.bbl}

\begin{thebibliography}{10}


\bibitem{MR658304}
Raoul Bott and Loring~W. Tu.
\newblock {\em Differential forms in algebraic topology}, volume~82 of {\em
  Graduate Texts in Mathematics}.
\newblock Springer-Verlag, New York-Berlin, 1982.

\bibitem{donagi2011physical}
Ron Donagi, Josh Guffin, Sheldon Katz, and Eric Sharpe.
\newblock Physical aspects of quantum sheaf cohomology for deformations of
  tangent bundles of toric varieties.
\newblock {\em Adv. Theor. Math. Phys.} {\bf 17} (2013) 1255-1301. 

\bibitem{donagi2014mathematical}
Ron Donagi, Josh Guffin, Sheldon Katz, and Eric Sharpe.
\newblock A mathematical theory of quantum sheaf cohomology.
\newblock {\em Asian Journal of Mathematics}, 18(3):387--418, 2014.



\bibitem{fulton1997young}
William Fulton.
\newblock {\em Young tableaux: with applications to representation theory and
  geometry}, volume~35.
\newblock Cambridge University Press, 1997.

\bibitem{MR1153249}
William Fulton and Joe Harris.
\newblock {\em Representation theory}, volume 129 of {\em Graduate Texts in
  Mathematics}.
\newblock Springer-Verlag, New York, 1991.
\newblock A first course, Readings in Mathematics.

\bibitem{Guo2017}
Jirui Guo, Zhentao Lu, and Eric Sharpe.
\newblock Quantum sheaf cohomology on {Grassmannians}.
\newblock {\em Communications in Mathematical Physics}, 352(1):135--184, 2017.

\bibitem{hartshorne} Robin Hartshorne.
\newblock {\em Deformation theory}, Graduate Texts in Mathematics,
vol. 257.
\newblock Springer, New York, 2010.

\bibitem{huybrechts1997geometry}
Daniel Huybrechts and Manfred Lehn.
\newblock {\em The geometry of moduli spaces of sheaves}, volume~41.
\newblock Springer, 1997.

\bibitem{katz2006notes}
Sheldon Katz and Eric Sharpe.
\newblock Notes on certain (0, 2) correlation functions.
\newblock {\em Communications in mathematical physics}, 262(3):611--644, 2006.

\bibitem{lu2015correlator}
Zhentao Lu.
\newblock A correlator formula for quantum sheaf cohomology.
\newblock {\em ArXiv e-print 1511.09158}.

\bibitem{mcorist2011revival}
Jock McOrist.
\newblock The revival of (0, 2) sigma models.
\newblock {\em International Journal of Modern Physics A}, 26(01):1--41, 2011.

\bibitem{mcorist2008half}
Jock McOrist and Ilarion~V. Melnikov.
\newblock {Half-twisted correlators from the Coulomb branch}.
\newblock {\em Journal of High Energy Physics}, 2008(04):071, 2008.

\bibitem{mcorist2009summing}
Jock McOrist and Ilarion~V. Melnikov.
\newblock Summing the instantons in half-twisted linear sigma models.
\newblock {\em Journal of High Energy Physics}, 2009(02):026, 2009.

\bibitem{melnikov2012recent}
Ilarion Melnikov, Savdeep Sethi, and Eric Sharpe.
\newblock Recent developments in (0, 2) mirror symmetry.
\newblock {\em SIGMA}, 8:068, 2012.

\bibitem{weyman2003cohomology}
Jerzy Weyman.
\newblock {\em Cohomology of vector bundles and syzygies}, volume 149.
\newblock Cambridge University Press, 2003.



\end{thebibliography}
\bibliographystyle{plain}
\end{document}